\documentclass[10pt, a4paper, oneside, reqno]{amsart}
\usepackage{amsthm}
\usepackage{amsfonts}
\usepackage{amsmath}
\usepackage{amssymb}
\usepackage{mathrsfs}
\usepackage{euscript}   
\usepackage{color}
\usepackage{graphicx}

\usepackage{float}
\usepackage{amscd}
\usepackage{amsxtra}
\usepackage{hyperref}

\usepackage{tikz}
\usepackage{url}
\usepackage{enumerate}
\usepackage{mathrsfs}
\usepackage{epsfig}
\usepackage[active]{srcltx}
\usetikzlibrary{arrows}
\usetikzlibrary{calc}
\theoremstyle{plain}
\newtheorem{theorem}{Theorem}[section]

\newtheorem{lemma}[theorem]{Lemma}
\newtheorem{proposition}[theorem]{Proposition}
\newtheorem{corollary}[theorem]{Corollary}

\theoremstyle{definition}

\newtheorem{example}[theorem]{Example}
\newtheorem{question}[theorem]{Question}

\theoremstyle{remark}
\newtheorem{remark}[theorem]{Remark}
\newtheorem*{propositionA*}{$\mathrm{Proposition\,A}$}
\newtheorem*{remarkA*}{$\mathrm{Remark\,A}$}
\newtheorem*{propositionB*}{$\mathrm{Proposition\,B}$}
\newtheorem*{exampleA*}{$\mathrm{Example\,A}$}
\newtheorem*{theoremA*}{$\mathrm{Theorem\,A}$}
\newtheorem*{lemmaA*}{$\mathrm{Lemma\,A}$}
\newtheorem*{corollaryA*}{$\mathrm{Corollary\,A}$}
\newcommand*{\D}{\mathrm{d\hspace{.1ex}}}
\newcommand{\ncom}{\newcommand}
\ncom{\inp}[2]{\langle{#1},\,{#2} \rangle}

\newcommand*{\Ge}{\geqslant}
\newcommand*{\Le}{\leqslant}
\newcommand*{\supp}{\mathrm{supp}}
\makeatletter
\@namedef{subjclassname@2020}{%
\textup{2020} Mathematics Subject Classification}
\makeatother

\ncom{\bq}{\begin{equation}}
\ncom{\eq}{\end{equation}}
\ncom{\beqn}{\begin{eqnarray*}}
\ncom{\eeqn}{\end{eqnarray*}}
\ncom{\beq}{\begin{eqnarray}}
\ncom{\eeq}{\end{eqnarray}}
\ncom{\nno}{\nonumber}
\ncom{\rar}{\rightarrow}
\ncom{\Rar}{\Rightarrow}
\ncom{\noin}{\noindent}
\ncom{\bc}{\begin{centre}}
\ncom{\ec}{\end{centre}}
\ncom{\sz}{\scriptsize}
\ncom{\rf}{\ref}
\ncom{\sgm}{\sigma}
\ncom{\Sgm}{\Sigma}
\ncom{\lmd}{\lambda}
\ncom{\Lmd}{\Lambda}
\ncom{\eps}{\epsilon}
\ncom{\pcc}{\stackrel{P}{>}}
\ncom{\dist}{{\rm\,dist}}
\ncom{\im}{{\rm Im\,}}
\ncom{\sgn}{{\rm sgn\,}}
\ncom{\ba}{\begin{array}}
\ncom{\ea}{\end{array}}
\ncom{\eof}{\hfill{{\rule{1.5mm}{1.5mm}}}}
\ncom{\hone}{\mbox{\hspace{1em}}}
\ncom{\htwo}{\mbox{\hspace{2em}}}
\ncom{\hthree}{\mbox{\hspace{3em}}}
\ncom{\hfour}{\mbox{\hspace{4em}}}
\ncom{\hsev}{\mbox{\hspace{7em}}}
\ncom{\vone}{\vskip 2ex}
\ncom{\vtwo}{\vskip 4ex}
\ncom{\vonee}{\vskip 1.5ex}
\ncom{\vthree}{\vskip 6ex}
\ncom{\vfour}{\vspace*{8ex}}
\ncom{\integ}[4]{\int_{#1}^{#2}\,{#3}\,d{#4}}
\ncom{\vspan}[1]{{{\rm\,span}\#1 \}}}
\ncom{\dm}[1]{\displaystyle {#1}}

\begin{document}

\title[Subnormality of the quotients]{Subnormality of the quotients of \\ $\mathbb T^d$-invariant Hilbert modules}
\author{K. S. Amritha, S. Bera, S. Chavan, S. S. Sequeira}
\address{Department of Mathematics and Statistics,
	Indian Institute of Technology Kanpur, Kanpur, India.}
\email{\small amrithaks22@iitk.ac.in, santu20@iitk.ac.in, chavan@iitk.ac.in, shanolas@iitk.ac.in}

\thanks{The first author is supported by the PMRF Scheme $\mathsf{2302755}$, and the second author is supported by the FARE Scheme $\mathsf{2508001}$}

\keywords{homogeneous, square-free, Reinhardt domain, Hardy space, Drury–Arveson space, Hilbert module, quotient module, module tensor, subnormal}
\subjclass[2020]{Primary 47A13 47B20 Secondary 46E22 32A10}

\begin{abstract}  
In this paper, we investigate $\mathbb T^d$-invariant Hilbert modules 
$\mathscr H$ over the polynomial ring $\mathbb C[z_1, \ldots, z_d]$ and their quotients, with primary emphasis on the classification of subnormal quotient modules of the form $\mathscr H/[p],$ where $p$ is a homogeneous polynomial in $d$ complex variables. The motivation for this classification arises from the case $p(z_1, z_2)=z_1-z_2,$ in which 
the subnormality of the quotient module $\widehat{\mathscr H_{\kappa_1} \otimes \mathscr H_{\kappa_2}}/[p]$ is equivalent to that of the module tensor product $\mathscr H_{\kappa_1} \otimes_{\mathbb C[z]} \mathscr H_{\kappa_2}$ of 
$\mathbb T$-invariant Hilbert modules $\mathscr H_{\kappa_1}$ and $\mathscr H_{\kappa_2}$, a problem first considered by N. Salinas. 
In addition to general structural results on principal homogeneous submodules $[p]$ of $\mathscr H$, we prove that if $\mathscr H/[p]$ is subnormal, then $p$ must be square-free.
Furthermore, when 
$\mathscr H$ is either $H^2(\mathbb D^d)$ or $H^2(\mathbb B^d),$ $d \Ge 1,$ 
the subnormality of the quotient module $\mathscr H/[p]$ implies that $\deg \,p \Le 1.$ We further show that $H^2(\mathbb D^2)/[p]$ (resp. $H^2(\mathbb B^2)/[p]$) is subnormal if and only if $\deg \,p \Le 1.$ If $H^2_d$ denotes the Drury–Arveson module in $d$ dimensions, then $H^2_2/[p]$ is subnormal if and only if $p$ is nonzero and $\deg \,p \Le 1$. This is surprising, especially since $H^2_d$ is not a subnormal Hilbert module for $d \Ge 2.$ 
Moreover, the phenomenon above does not occur for the Dirichlet module $D_2(\mathbb B^2)$.
Finally, we present an example demonstrating that a $\mathcal U_d$-invariant subnormal Hilbert module $\mathscr H$ may have a subnormal quotient module $\mathscr H/[p]$ even when $\deg p = 2.$
\end{abstract}

\maketitle
\tableofcontents


\section{Rotation-invariant Hilbert modules} \label{S1}
In the 1980s, R. G. Douglas initiated a program that employed module-theoretic techniques in multivariable operator theory, advocating the use of module language and tools from commutative algebra and algebraic geometry to develop a model theory for tuples of commuting operators (see \cite{D1984, D1986}). For an overview of these topics and more recent advances, the reader is referred to \cite{CG2003, DP1989, Sa2015, Sb2015}. The investigations in this paper follow a similar line of inquiry, with a focus on subnormal quotient modules.

The set of nonnegative integers is denoted by $\mathbb Z_+$. Let $\mathbb D$ and $\mathbb T$ denote the open unit disc and the unit circle in the complex plane $\mathbb C,$ respectively. Fix a positive integer $d.$ For $z=(z_1, \ldots, z_d) \in \mathbb C^d$ and $\alpha=(\alpha_1, \ldots, \alpha_d) \in \mathbb Z^d_+,$ let $z^\alpha := z^{\alpha_1}_1\cdots z^{\alpha_d}_d,$ $\alpha! :=\alpha_1!\cdots\alpha_d!$ and $|\alpha| :=\alpha_1 + \cdots + \alpha_d$. For $\alpha=(\alpha_1, \ldots, \alpha_d)$ and $\beta =(\beta_1, \ldots, \beta_d) \in \mathbb Z^d_+,$ we say that $\alpha \Le \beta$ if $\alpha_j \Le \beta_j$ for every $j=1, \ldots, d.$ The symbol $\inp{z}{w}$ denotes the standard inner product on $\mathbb C^d$, and $\mathbb C[z_1,\ldots, z_d]$ denotes the ring of polynomials in $z_1,\ldots, z_d$ with complex coefficients.  
We say that $p \in \mathbb C[z_1, \ldots, z_d]$ is a {\it homogeneous polynomial of degree} $m \in \mathbb Z_+$ (denoted by $\deg\,p=m$) if $p(\lambda z_1, \ldots, \lambda z_d)=\lambda^m p(z_1, \ldots, z_d)$ for every $\lambda \in \mathbb C.$ A polynomial $p \in \mathbb C[z_1, \ldots, z_d]$ is said to be {\it square-free} if there is no nonconstant polynomial $q \in \mathbb C[z_1, \ldots, z_d]$ such that $q^2$ divides $p.$ The closed linear span of a nonempty subset $E$ of a normed linear space is denoted by $\bigvee E$.
For a nonempty set $X$ and $x \in X,$ let $\delta_x$ denote the Borel probability measure concentrated at $x.$ 
Let $\ell$ denote the Lebesgue measure on $[0, 1]$. 
Given measurable spaces $X$ and $Y,$ a measure $\mu$ on $X,$ and
a measurable function $\varphi : X \rar Y,$  the push-forward of $\mu$ by $\varphi$ is denoted by $\varphi_*\mu.$ The support of a positive Borel measure $\mu$ is denoted by $\supp \, \mu.$ Let $\Omega$ be a {\it domain} in $\mathbb C^d$, that is, a nonempty open connected subset of $\mathbb C^d.$ For a function $f : \Omega \rar \mathbb C$, let $Z(f)$ denote the zero set of $f.$
We say that $\Omega$ is a {\it Reinhardt domain} if it is invariant under the natural action of the $d$-torus $\mathbb T^d$: For every $z=(z_1, \ldots, z_d) \in \Omega$ and every $\zeta=(\zeta_1, \ldots, \zeta_d) \in \mathbb T^d$, the point $\zeta \cdot z := (\zeta_1 z_1, \ldots, \zeta_d z_d)$ belongs to $\Omega$.
For a Reinhardt domain $\Omega$ in $\mathbb C^d$ and a function $f : \Omega \rar \mathbb C$, define $\overline{f} : \Omega \rar \mathbb C$ by
$\overline{f}(z)=\overline{f(\overline{z})},$ where $\overline{a}:=(\overline{a}_1, \ldots, \overline{a}_d)$ for $a=(a_1, \ldots, a_d) \in \mathbb C^d,$ and $\overline{b}$ denotes the complex conjugate of $b \in \mathbb C.$

For a complex Hilbert space $\mathcal H,$ let $\mathcal B(\mathcal H)$ denote the unital $C^*$-algebra of bounded linear operators on $\mathcal H,$ where the identity operator $I$ is the unit, composition of operators is
the multiplication, and the uniquely defined adjoint $T^*$ of a bounded linear operator $T$ on $\mathcal H$ is the involution.  
A {\it commuting $d$-tuple $T$ on $\mathcal H$} is a $d$-tuple 
$(T_1, \ldots, T_d)$ of operators $T_1, \ldots, T_d \in \mathcal B(\mathcal H)$ satisfying $T_iT_j=T_jT_i$ for $1 \Le i \ne j \Le d$. The Taylor spectrum of $T$ is denoted by $\sigma(T).$ For the definition and basic
theory of the Taylor spectrum, the reader
is referred to \cite{C1988, T1970}. The polynomial functional calculus of $T$ makes the Hilbert space $\mathcal H$ into a {\it Hilbert module over $\mathbb C[z_1, \ldots, z_d]$}:
\beq \label{scalar-multi}
{\mathfrak m}_T(p, h) := p(T)h, \quad p \in \mathbb C[z_1, \ldots, z_d], ~h \in \mathcal H.
\eeq
The Hilbert module $\mathcal H$ with the {\it scalar multiplication} ${\mathfrak m}_T$ is denoted by the pair 
$(\mathcal H, {\mathfrak m}_T).$ We say that $(\mathcal H, {\mathfrak m}_T)$ is {\it subnormal} if ${\mathfrak m}_T(p, \cdot)=p(T)$ is subnormal for every $p \in \mathbb C[z_1, \ldots, z_d].$ By a result of E. Franks (see \cite[Theorem~0.2]{F1994}), $(\mathcal H, {\mathfrak m}_T)$ is a subnormal Hilbert module if and only if $T$ is subnormal. Recall that the commuting $d$-tuple $T=(T_1, \ldots, T_d)$ on $\mathcal H$ is {\it subnormal} if
there exists a Hilbert space $\mathcal K$ and a commuting $d$-tuple 
$N=(N_1, \ldots, N_d)$ on $\mathcal K$ consisting of normal
operators such that $\mathcal H \subseteq \mathcal K$ (as an isometric
embedding) and $T_j = {N_j}|_{\mathcal H}$ for $j=1, \ldots, d$; such a commuting $d$-tuple $N$ is called a {\it normal extension} of $T$. A normal extension $N$ of $T$ is said to be {\it minimal} if $\mathcal K$ is the only
subspace that contains $\mathcal H$ and reduces each $N_j$ for $j=1, \ldots, d.$

Let $\Omega$ be a bounded, Reinhardt domain in $\mathbb C^d$ containing the origin.
By a {\it $\mathbb T^d$-invariant Hilbert module $\mathscr H_\kappa$ on $\Omega$}, we mean a reproducing kernel Hilbert space of complex-valued holomorphic functions on $\Omega$ with a reproducing kernel $\kappa : \Omega \times \Omega \rightarrow \mathbb C$, which satisfies the following conditions: 
\begin{enumerate}
\item[({\tiny C1})] $\kappa$ does not vanish at the origin,
\item[({\tiny C2})] $\kappa(\zeta \cdot z, \,\zeta \cdot w)=\kappa(z, w)$
for all $\zeta \in \mathbb T^d$ and $z, w \in \Omega,$
\item[({\tiny C3})] $\mathscr H_\kappa$ is a Hilbert module with scalar multiplication ${\mathfrak m}$ given by $$
{\mathfrak m}(p, f) := p \cdot f, \quad p \in \mathbb C[z_1, \ldots, z_d], ~f \in \mathscr H_\kappa,$$ 
\item[({\tiny C4})] polynomials in $z_1, \ldots, z_d$ are dense in $\mathscr H_\kappa.$
\end{enumerate}
Let $\mathscr H_\kappa$ be a $\mathbb T^d$-invariant Hilbert module on $\Omega$; when the domain $\Omega$ plays no role, we simply call it a $\mathbb T^d$-invariant Hilbert module. By \cite[Theorem~2.11]{CY2015}, together with assumptions ({\tiny C1}) and ({\tiny C2}), there exists a multi-sequence $\{a_\alpha\}_{\alpha \in \mathbb Z^d_+}$ of positive real numbers such that 
\beqn
\kappa(z, w) = \sum_{\alpha \in \mathbb Z^d_+}a_\alpha \,z^\alpha \overline{w}^\alpha, \quad z, w \in \Omega,
\eeqn
and $\{z^\alpha\}_{\alpha \in \mathbb Z^d_+}$ forms an orthogonal basis for $\mathscr H_\kappa$.
Papadakis’s theorem (see \cite[Theorem~2.10]{PR2016}), combined with \cite[Proposition~4.11]{PR2016}, now shows that 
\beq \label{onbasis}
\mbox{$\{\sqrt{a_\alpha}\,z^\alpha\}_{\alpha \in \mathbb Z^d_+}$ forms an orthonormal basis for $\mathscr H_\kappa$;} 
\eeq
for a one-variable analog of this fact, see \cite[Theorem~4.12]{PR2016}. Moreover, 
\beq \label{kappa-cts}
\mbox{$\kappa : \Omega \times \Omega \rightarrow \mathbb C$ is jointly continuous.} 
\eeq
Indeed, by Hartogs' theorem (see \cite[Theorem*\,1.7.13]{JP2008}), the mapping
$(z, w) \mapsto \kappa(z, \overline{w})$ is
holomorphic on $\Omega \times \Omega$. 
Furthermore, by condition ({\tiny C3}), the coordinate functions $z_1, \ldots, z_d$ are {\it multipliers} of $\mathscr H_\kappa,$ that is, 
\beqn
f \in \mathscr H_\kappa ~\Rightarrow ~ z_jf \in \mathscr H_\kappa, \quad j=1, \ldots, d.
\eeqn
Note that the operator $\mathscr M_{z_j}$ of multiplication by the $j$th coordinate function $z_j,$ $j=1, \ldots, d,$ is a bounded linear operator on $\mathscr H_\kappa.$ Let $\mathscr M_z$ denote the commuting $d$-tuple $(\mathscr M_{z_1}, \ldots, \mathscr M_{z_d})$. Then
${\mathfrak m}={\mathfrak m}_{\mathscr M_z}$ (see \eqref{scalar-multi}).  

Let $\mathbb B^d$ denote the open unit ball in $\mathbb C^d$, and $\mathcal U_d$ denote the group of complex $d \times d$ unitary matrices. Note that $\mathcal U_d$ acts naturally on the unit ball $\mathbb B^d$: For every $z \in \mathbb B^d$ and every $U \in \mathcal U_d,$ the point $U\!\cdot \!z$ belongs to $\mathbb B^d$.
A $\mathbb T^d$-invariant Hilbert module $\mathscr H_\kappa$ on $\mathbb B^d$
is said to be {\it $\mathcal U_d$-invariant} if \beqn \kappa(U\!\cdot \!z,
\, U\!\cdot \!w)=\kappa(z, w), \quad z, w \in \mathbb B^d, ~U \in \mathcal U_d. \eeqn
See \cite[Theorem 2.12]{CY2015} for a characterization of $\mathcal U_d$-invariant Hilbert modules.

Let $\Omega$ be a domain in $\mathbb C$ and $\kappa_j : \Omega \times \Omega \rar \mathbb C,$ $j=1, 2,$ be two positive semi-definite kernels. Following \cite[Definition~5.12]{PR2016}, define $$\kappa_1 \otimes \kappa_2(z, w)=\kappa_1(z_1, w_1)\kappa_2(z_2, w_2), \quad z=(z_1, z_2), w=(w_1, w_2) \in \Omega \times \Omega.$$
The {\it tensor product} $\widehat{\mathscr H_{\kappa_1}\otimes\mathscr H_{\kappa_2}}$ of $\mathscr H_{\kappa_1}$ and $\mathscr H_{\kappa_2}$ is the Hilbert module $\mathscr H_{\kappa_1 \otimes \kappa_2}$ on $\Omega \times \Omega$ (see \cite[Theorem~5.11]{PR2016}). 
Following \cite[Definition~5.15]{PR2016}, define $$\kappa_1  \odot \kappa_2(z, w)=\kappa_1(z, w)\kappa_2(z, w), \quad z, w \in \Omega.$$
The {\it module tensor product} $\mathscr H_{\kappa_1} \otimes_{\mathbb C[z]} \mathscr H_{\kappa_2}$ of $\mathscr H_{\kappa_1}$ and $\mathscr H_{\kappa_2}$ is the Hilbert module $\mathscr H_{\kappa_1\odot \kappa_2}$ on $\Omega$ (see \cite[Corollary~3.6]{S1988}).

By a {\it submodule} $\mathcal M$ of $\mathscr H_\kappa,$ we mean a closed subspace $\mathcal M$ of $\mathscr H_\kappa$ that is invariant under ${\mathfrak m}_{\mathscr M_z}.$ Let $p$ be a homogeneous polynomial in $\mathbb C[z_1, \ldots, z_d].$
A {\it principal homogeneous submodule of $\mathscr H_\kappa$ generated by $p$} is given by $[p]:=\bigvee \{z^\alpha p : \alpha \in \mathbb Z^d_+\}$. In general, the inclusion $p \mathscr H_\kappa \subseteq [p]$ may be strict (see Example\,A). There is another submodule of $\mathscr H_\kappa$ naturally associated with a homogeneous polynomial $p$: 
$$[p]_0:=\{f \in \mathscr H_\kappa : f|_{Z(p)}=0\}.$$  Clearly,
$[p] \subseteq [p]_0$. 
In general, this inclusion may be strict (e.g. $[z^2_1] \subsetneq [z_1] =[z^2_1]_0$). However, 
if $\mathscr H_\kappa$ is a $\mathbb T^2$-invariant Hilbert module and $p$ is a nonzero homogeneous square-free polynomial, then $[p]_0 = [p].$ A proof of this fact, being peripheral to the main text, is relegated to the appendix (see Theorem\,A). 

Set $\mathscr H_\kappa /\mathcal M:=\mathscr H_\kappa \ominus \mathcal M$ and let
$P_\mathcal M$ (resp. $P^{\perp}_\mathcal M$) denote the orthogonal projection of 
$\mathscr H_\kappa$ onto $\mathcal M$ (resp. $\mathscr H_\kappa /\mathcal M$).
Consider the bounded linear operators $\mathscr T_{z_j, \mathcal M}$ on $\mathscr H_\kappa /\mathcal M$ given by 
\beqn
\mathscr T_{z_j, \mathcal M}:=P^{\perp}_{\mathcal M}\mathscr M_{z_j}|_{\mathscr H_\kappa /\mathcal M}, \quad j=1, \ldots, d.
\eeqn
It turns out that $\mathscr T_{z, \mathcal M}:=(\mathscr T_{z_1, \mathcal M}, \ldots,  \mathscr T_{z_d, \mathcal M})$ is a commuting $d$-tuple, and the scalar multiplication $\mathfrak m_{\mathscr T_{z}}$ satisfies
\beqn
{\mathfrak m}_{\mathscr T_{z}}(p, f) = P^{\perp}_\mathcal M p(\mathscr M_z)f, \quad p \in \mathbb C[z_1, \ldots, z_d], ~f \in \mathscr H_\kappa /\mathcal M
\eeqn 
(see Lemma~\ref{poly-calculus}(ii)). 
In particular, $\mathscr H_\kappa /\mathcal M$ is the Hilbert module $(\mathscr H_\kappa \ominus \mathcal M, {\mathfrak m}_{\mathscr T_z}).$ Since $\mathscr M_{z}\mathcal M \subseteq \mathcal M,$ the quotient $\mathscr H_\kappa/\mathcal M$ is invariant under $\mathscr M_{z}^*,$ and 
\beq
\label{adjoint}
\mathscr T^*_{z_j}=\mathscr M^*_{z_j}|_{\mathscr H_\kappa/\mathcal M}, \quad j=1, \ldots, d.  
\eeq
For simplicity, and provided it does not cause ambiguity, we drop the symbol $\mathcal M$ from the notations $P_\mathcal M,$ $P^{\perp}_\mathcal M$, $\mathscr T_{z, \mathcal M}$. 

Quotient modules on the polydisc or the unit ball have been previously investigated in \cite{CG2003, DGS2020, DM1993, DMV2000, DMV2001, FR2006, GW2007}, whereas subnormal Hilbert modules have been studied in \cite{AC2017, CY2015, CS1985, GHX2004, S1988}. The $\mathcal U_d$-invariant Hilbert modules have been treated in \cite{A1998, CY2015, GKMP2024, GHX2004}. For a brief discussion of essentially reductive quotients and submodules, see \cite[Section~5.5]{CM2020}.

In the operator-theoretic context, subnormality does not naturally propagate from a Hilbert module to its quotients; hence, there is no simple answer to the following question. 
\begin{question} \label{Q} 
For which principal homogeneous submodules $\mathcal M$ of a $\mathbb T^d$-invariant Hilbert module $\mathscr H_\kappa$ on $\Omega$ (respectively, a $\mathcal U_d$-invariant Hilbert module $\mathscr H_\kappa$ on $\mathbb B^d$), the quotient module $\mathscr H_\kappa /\mathcal M$ is subnormal?
\end{question}
\begin{remark} \label{rmk-one-v}
Let $p$ be a nonconstant homogeneous polynomial in $\mathbb C[z],$ that is, $p(z)=z^{k}$ for some integer $k \Ge 1.$ Let $\mathscr H_\kappa$ be a $\mathbb T$-invariant Hilbert module.
Note that $\big\{z^j: 0 \Le j \Le k-1\big\}$ is an orthogonal basis for $\mathscr H_\kappa /[p].$ Since 
$
\|\mathscr T_z(z^{k-1})\|=0 < \|\mathscr T_z^*(z^{k-1})\|$ whenever $k \Ge 2,$
it follows from \cite[Proposition~II.4.2]{Co1991} that $\mathscr H_\kappa /[p]$ is subnormal if and only if $\deg\,p=1.$ 
\eof
\end{remark}
In view of Remark~\ref{rmk-one-v}, we focus on Question~\ref{Q} for $d \Ge 2.$ This question is closely related to the following one posed by N. Salinas (see \cite[Remark 3.7]{S1988}): {\it Is the module tensor product $\mathscr H_{\kappa_1} \otimes_{\mathbb C[z]} \mathscr H_{\kappa_2}$ a subnormal Hilbert module when $\mathscr H_{\kappa_1}$ and $\mathscr H_{\kappa_2}$ are subnormal Hilbert modules?} 
This problem was subsequently investigated in the setting of weighted Bergman spaces, with the aim of clarifying the precise relationship between an algebraic notion (the module tensor product) and a geometric one (subnormality). The reader is referred to \cite[Corollary 1.8]{AC2017} for a negative answer. To see the precise connection between Salinas's question and Question~\ref{Q},
note that if $p(z_1, z_2)=z_1 - z_2$ and $\mathscr H_{\kappa_1},$ $\mathscr H_{\kappa_2}$ are subnormal $\mathbb T$-invariant Hilbert modules on $\mathbb D$, then $\widehat{\mathscr H_{\kappa_1}\otimes\mathscr H_{\kappa_2}}/[p]$ is subnormal if and only if $\mathscr H_{\kappa_1} \otimes_{\mathbb C[z]} \mathscr H_{\kappa_2}$ is subnormal (see Lemma~\ref{lem-sub-linear}(iv)).

\section{Statements of the main results} 

The following answers Question~\ref{Q} for all principal homogeneous submodules of the Hardy module on $\mathbb D^2$ (see Example~\ref{ex-Hardy}).

\begin{theorem} \label{Hardy-q-sub}
Let $p$ be a nonconstant homogeneous polynomial in $\mathbb C[z_1, z_2].$ Then the following are equivalent$:$
\begin{enumerate}
    \item[$\mathrm{(i)}$] the quotient module $H^2(\mathbb D^2)/[p]$ is subnormal,
    \item[$\mathrm{(ii)}$] $\deg \, p=1$,
    \item[$\mathrm{(iii)}$] there exist $a, b \in \mathbb C,$ not both zero, such that $a\mathscr T_{z_1, [p]}=b\mathscr T_{z_2, [p]}.$
\end{enumerate}
\end{theorem}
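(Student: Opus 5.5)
The plan is to prove (ii)$\Rightarrow$(iii)$\Rightarrow$(ii), then (ii)$\Rightarrow$(i), and finally (i)$\Rightarrow$(ii). Let $m=\deg p\Ge 1$. Every element of $[p]$ has a Taylor expansion containing only monomials of degree $\Ge m$. Since the monomials are orthogonal in $H^2(\mathbb D^2)$, all polynomials of degree $\Le m-1$ therefore lie in $H^2(\mathbb D^2)/[p]$.

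\emph{(ii)$\Rightarrow$(iii).} Write $p=c_1z_1+c_2z_2$. For $f\in H^2(\mathbb D^2)/[p]$ we have $pf\in[p]$, so $c_1\mathscr T_{z_1}f+c_2\mathscr T_{z_2}f=P^\perp(pf)=0$. Taking $a=c_1$ and $b=-c_2$ gives (iii).

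\emph{(iii)$\Rightarrow$(ii).} Apply $a\mathscr T_{z_1}-b\mathscr T_{z_2}$ to $1\in H^2(\mathbb D^2)/[p]$. This gives $az_1-bz_2\in[p]$. The orthogonal projection onto the homogeneous polynomials of degree $1$ is continuous and annihilates $z^\alpha p$ whenever $m\Ge 2$, hence annihilates all of $[p]$. Since $az_1-bz_2\neq 0$ is itself homogeneous of degree $1$, this forces $m=1$.

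\emph{(ii)$\Rightarrow$(i).} Let $p=c_1z_1+c_2z_2$ and, by symmetry, assume $|c_1|\Le|c_2|$, so that $c_2\ne0$ and $\lambda:=-c_1/c_2$ satisfies $|\lambda|\Le1$. By (iii) we have $\mathscr T_{z_2}=\lambda\mathscr T_{z_1}$. If $N$ is a normal extension of $\mathscr T_{z_1}$, then $(N,\lambda N)$ is a normal extension of the pair, so it suffices to show that $\mathscr T_{z_1}$ is subnormal.

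Since $p$ is square-free, Theorem~A gives $[p]=[p]_0$. The restriction map $f\mapsto f(z,\lambda z)$ is therefore a unitary from $H^2(\mathbb D^2)/[p]$ onto the reproducing kernel Hilbert space on $\mathbb D$ with kernel
$$\frac{1}{(1-z\overline w)(1-|\lambda|^2z\overline w)}.$$
It intertwines $\mathscr T_{z_1}$ with multiplication by $z$, because $P^\perp$ just removes the part vanishing on $Z(p)$.

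Put $t=|\lambda|^2$. Multiplication by $z$ is then a weighted shift with
$$\|z^n\|^2=\frac{1-t}{1-t^{n+1}}=\int_{[0,1]}x^n\,d\mu(x),\qquad \mu=(1-t)\sum_{k\Ge0}t^k\delta_{t^k}.$$
For $t=1$ the norms are $\|z^n\|^2=1/(n+1)$, and one takes $\mu=\ell$ instead. In either case $\{\|z^n\|^2\}$ is a Hausdorff moment sequence, so Berger's theorem shows the shift is subnormal.

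\emph{(i)$\Rightarrow$(ii).} This is the main obstacle. Suppose $m\Ge2$, and factor $p=\ell q$ with $\ell=az_1+bz_2$ linear and $q$ homogeneous of degree $m-1\Ge1$. Subnormality of the quotient makes $\ell(\mathscr T_z)$ subnormal, hence hyponormal, so $\ker\ell(\mathscr T_z)\subseteq\ker\ell(\mathscr T_z)^*$. Now $q$ lies in the quotient and $\ell(\mathscr T_z)q=P^\perp p=0$. By \eqref{adjoint},
$$\ell(\mathscr T_z)^*q=\bar a\,\mathscr M_{z_1}^*q+\bar b\,\mathscr M_{z_2}^*q,$$
where $\mathscr M_{z_j}^*$ acts as the backward shift in $z_j$. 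Writing $q=\sum_k c_kz_1^kz_2^{m-1-k}$, this vanishes only if $\bar a c_{k+1}+\bar b c_k=0$ for all $k$. I would first try to choose the factorization so that this fails, which gives a contradiction at once.

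In the exceptional case, where every choice of linear factor yields the geometric coefficient pattern, I would enlarge the family of test vectors. The candidates are $P^\perp(z_jq)$ and other elements of degree $m$ in $\ker\ell(\mathscr T_z)$. I would again compare $\|\ell(\mathscr T_z)g\|$ with $\|\ell(\mathscr T_z)^*g\|$, or use a two-term positivity test of the type in \cite[Proposition~II.4.2]{Co1991}. The aim is to show that $\ker\ell(\mathscr T_z)$ cannot reduce $\ell(\mathscr T_z)$ when $m\Ge2$; this requires some explicit computation of $P^\perp$ on degree-$m$ polynomials.
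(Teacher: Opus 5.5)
Your proofs of (ii)$\Rightarrow$(iii), (iii)$\Rightarrow$(ii) and (ii)$\Rightarrow$(i) are correct. The last two take routes different from the paper's. Your degree-one projection replaces Lemma~\ref{poly-calculus-new} and Proposition~A. Restricting to $Z(p)$ via Theorem~A replaces the basis of Lemma~\ref{coro-generators}, and your normalization $|\lambda|\Le 1$ avoids the separate case $|a|>1$. The gap is in (i)$\Rightarrow$(ii). You only handle the case where some factorization $p=\ell q$ has $\ell(\mathscr T_z)^*q\neq 0$. That case covers every non-square-free $p$, consistent with Proposition~\ref{rotation-q-sub}. Your plan for the remaining case cannot work.

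The exceptional case is not marginal: it contains $p=z_1z_2$, and also $p=z_1^m-z_2^m$ for every $m\Ge 2$. For $p=z_1z_2$ the quotient has orthogonal basis $\{z_1^n\}_{n\Ge0}\cup\{z_2^n\}_{n\Ge1}$. The operator $\mathscr T_{z_1}$ is the orthogonal sum of the unilateral shift on $\bigvee\{z_1^n:n\Ge0\}$ and the zero operator on $\bigvee\{z_2^n:n\Ge1\}=\ker\mathscr T_{z_1}$; $\mathscr T_{z_2}$ decomposes symmetrically. So for each linear factor $\ell$ of $p$, $\ell(\mathscr T_z)$ \emph{is} subnormal and $\ker\ell(\mathscr T_z)$ does reduce it. Your stated aim is therefore false, and no choice of test vectors or single-operator positivity test on $\ell(\mathscr T_z)$ can give a contradiction.

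The obstruction is joint, and it must use the specific norms of $H^2(\mathbb D^2)$: Example~\ref{fail-linear-bidisc} gives a subnormal $\mathbb T^2$-invariant module whose quotient by $[z_1z_2]$ is subnormal. The missing idea is the paper's moment argument (Proposition~\ref{polydisc-linear}):
\begin{enumerate}
\item[$\bullet$] $\mathscr T_z$ is a contraction, so its minimal normal extension is too. Hence $\|\mathscr T_z^\alpha 1\|^2=\int_{\overline{\mathbb D}^2}|w^\alpha|^2\,d\mu(w)$ for some positive measure $\mu$.
\item[$\bullet$] Since $\deg p\Ge 2$, $\mathscr T_{z_j}1=z_j$. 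So $\int(1-|w_j|^2)\,d\mu=\|1\|^2-\|z_j\|^2=0$, which forces $\supp\,\mu\subseteq\mathbb T^2$.
\item[$\bullet$] Then $\|P^\perp z^\alpha\|=\|\mathscr T_z^\alpha 1\|=1=\|z^\alpha\|$ for every $\alpha$. Hence $Pz^\alpha=0$ for all $\alpha$, so $Pp=0$, contradicting $0\neq p\in[p]$.
\end{enumerate}
For $p=z_1z_2$ alone, a single-operator test does work if you leave the factors of $p$. The operator $T=\mathscr T_{z_1}+\mathscr T_{z_2}$ satisfies $\|T^*(z_1+z_2)\|^2=4>2=\|T(z_1+z_2)\|^2$, so it is not hyponormal. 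Nothing in your plan supplies such a test in general.
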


In some particular cases, one can also explicitly describe the representing measures for the moments of $\mathscr{T}_z$ (see Remark~\ref{rmk-rep-measures}). Also, Theorem~\ref{Hardy-q-sub} fails for $\mathbb T^2$-invariant Hilbert modules on $\mathbb D^2$ (see Example~\ref{fail-linear-bidisc}). 

The second main result of this paper answers Question~\ref{Q} for all principal homogeneous submodules of the Hardy module on $\mathbb B^2$ and the Drury–Arveson module (see Example~\ref{ex-Hardy}).
\begin{theorem} \label{Hardy-ball-q-sub}
Let $p$ be a nonconstant homogeneous polynomial in $\mathbb C[z_1, z_2],$
and let $\mathscr H$ be either the Hardy module $H^2(\mathbb B^2)$ or the 
Drury–Arveson module $H^2_2$. Then the following are equivalent$:$
\begin{enumerate}
    \item[$\mathrm{(i)}$] the quotient module $\mathscr H/[p]$ is subnormal,
    \item[$\mathrm{(ii)}$] $\deg \, p=1$,
    \item[$\mathrm{(iii)}$] there exist $a, b \in \mathbb C,$ not both zero, such that $a\mathscr T_{z_1, [p]}=b\mathscr T_{z_2, [p]}.$
\end{enumerate}
\end{theorem}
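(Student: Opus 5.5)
We prove the cycle (ii)$\Rightarrow$(iii)$\Rightarrow$(i)$\Rightarrow$(ii) for both $H^2(\mathbb B^2)$ (kernel $(1-\inp{z}{w})^{-2}$) and $H^2_2$ (kernel $(1-\inp{z}{w})^{-1}$).

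\emph{(ii)$\Rightarrow$(iii).} Write $p=\beta z_1-\alpha z_2$ with $(\alpha,\beta)\neq(0,0)$. Since $p\,\mathscr H\subseteq[p]$, we have $P^\perp(\beta z_1-\alpha z_2)f=0$ for every $f$ in the quotient. Hence $\beta\mathscr T_{z_1}=\alpha\mathscr T_{z_2}$.

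\emph{(iii)$\Rightarrow$(i).} Suppose, say, $b\neq 0$, so that $\mathscr T_{z_2}=c\,\mathscr T_{z_1}$ with $c=a/b$. Subnormality of the pair is then equivalent to subnormality of the single operator $\mathscr T_{z_1}$: a normal extension $N$ of $\mathscr T_{z_1}$ gives the commuting normal extension $(N,cN)$. Both spaces are $\mathcal U_2$-invariant, so a unitary change of variables carries the line to $z_2=0$. Since the statement of (i) is stated for all $p$ of degree one, I will first check that this unitary change maps $[p]$ to $[z_2]$ and intertwines the quotient tuples. Then $\mathscr H/[z_2]=\bigvee\{z_1^n\}$, and $\mathscr T_{z_1}$ is the weighted shift with weights $\|z_1^{n+1}\|/\|z_1^n\|$ computed in $\mathscr H$.

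For $H^2(\mathbb B^2)$ we have $\|z_1^n\|^2=1/(n+1)$, which gives the Bergman shift; it is subnormal with Berger measure $2t\,dt$ on $[0,1]$. For $H^2_2$ we have $\|z_1^n\|^2=1$, which gives the unilateral shift, which is subnormal. This is where the surprising Drury–Arveson phenomenon comes from.

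\emph{(i)$\Rightarrow$(ii).} This is the main step. I would rely on the abstract's claim that subnormality forces $p$ to be square-free, proved earlier as a general structural result. Then $p=\prod_{j=1}^m \ell_j$ with pairwise non-proportional linear factors $\ell_j$. By Theorem~A, $[p]=[p]_0$. Hence $\mathscr H/[p]$ is the closed span of the kernels $\kappa(\cdot,w)$ with $w\in Z(p)$, and the adjoints act by $\mathscr T_{z_j}^*\kappa(\cdot,w)=\overline{w_j}\,\kappa(\cdot,w)$.

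Suppose $m\ge 2$. The quotient then contains the homogeneous components of degree $<m$: every polynomial of degree $<m$ is orthogonal to $[p]$, because $[p]$ is spanned by homogeneous pieces of degree $\ge m$ and the monomials are orthogonal. The quotient also contains, in degree $\ge m$, the orthocomplement of $p\cdot(\text{homogeneous polynomials})$.

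I would find a violation of subnormality through a moment or hyponormality test. First I try $2\times2$ hyponormality of the pair, $\big([\mathscr T_{z_j}^*,\mathscr T_{z_i}]\big)_{i,j}\ge0$, tested on a vector built from $1$ and the degree-one components. If that fails to give a contradiction, I would instead use the Agler/Lambert-type positivity $\sum(-1)^{|\alpha|}\ldots$, or compare the norms $\|\mathscr T_{z_1}^n x\|$ against a Berger-measure representation. The intended test vector is the orthogonal projection of $z_1^{m-1}$ and similar low-degree vectors.

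The computation should reduce, via $\mathcal U_2$-invariance, to a finite-dimensional check in degrees $m-1$, $m$, $m+1$. In these degrees the compression of multiplication from degree $m-1$ to degree $m$ has a $1$-dimensional kernel, namely the direction of $p$. Together with Remark~\ref{rmk-one-v}-style behaviour, this should produce a vector $x$ with $\|\mathscr T_{z}x\|$ strictly smaller than the corresponding adjoint norm, contradicting the hyponormality that subnormality implies.

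I expect the hardest step to be making this norm inequality explicit for a general square-free $p$ of degree $m\ge2$. My plan is to use $\mathcal U_2$ to normalize one factor to $z_2$, and then compute with the explicit monomial norms $\|z^\alpha\|^2=\alpha!/(|\alpha|+1)!$ for $H^2(\mathbb B^2)$ and $\alpha!/|\alpha|!$ for $H^2_2$.
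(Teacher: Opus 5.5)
Your (ii)$\Rightarrow$(iii) is fine. The implication (i)$\Rightarrow$(ii), however, is the heart of the theorem, and it is not actually proved.

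\textbf{The gap in (i)$\Rightarrow$(ii).} You list tests you might try (joint hyponormality, Agler-type positivity, Berger-measure comparisons) but carry none of them out. The finite-dimensional check for a general square-free $p$ of degree $m\Ge 2$ remains an expectation.

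The norm comparison you end with, a Remark~\ref{rmk-one-v}-style inequality between $\|\mathscr T_{z_j}x\|$ and $\|\mathscr T^*_{z_j}x\|$, cannot succeed in general. In $H^2_2/[z_1z_2]$, by \eqref{action-Tz-12} and $\|z_j^n\|=1$, each $\mathscr T_{z_j}$ is the direct sum of an isometric shift and $0$. Each is therefore subnormal on its own, so the obstruction there is genuinely joint. Example~\ref{fail-linear-ball} also shows that any argument must use the specific monomial norms, not merely $\mathcal U_2$-invariance. The detour through square-freeness and Theorem\,A is unnecessary.

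The missing idea is to look only at the vector $1$ and its scalar spectral measure.
\begin{itemize}
\item Since $\sigma(\mathscr M_z)\subseteq\overline{\mathbb B^2}$, Lemma~\ref{Taylor-quotient} and Putinar's spectral inclusion give a positive measure $\mu$ on $\overline{\mathbb B^2}$ with $\|\mathscr T_z^\alpha(1)\|^2=\int|w^\alpha|^2\,\D\mu(w)$.
\item If $\deg p\Ge 2$, then $\mathscr T_{z_j}(1)=z_j$ by Lemma~\ref{poly-calculus}(v).
\item For $H^2_2$ this gives $\int(1-|w_j|^2)\,\D\mu=\|1\|^2-\|z_j\|^2=0$ for $j=1,2$. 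Hence $\supp\,\mu\subseteq\{|w_1|=|w_2|=1\}\cap\overline{\mathbb B^2}=\emptyset$, which is absurd since $\mu(\overline{\mathbb B^2})=\|1\|^2=1$.
\item For $H^2(\mathbb B^2)$ it gives $\int(1-|w_1|^2-|w_2|^2)\,\D\mu=1-\tfrac12-\tfrac12=0$, so $\mu$ lives on $\partial\mathbb B^2$. Hence, for every $n$,
\[
\sum_{|\alpha|=n}\frac{n!}{\alpha!}\|P^\perp z^\alpha\|^2=\int(|w_1|^2+|w_2|^2)^n\,\D\mu=1=\sum_{|\alpha|=n}\frac{n!}{\alpha!}\|z^\alpha\|^2 .
\]
This forces $Pz^\alpha=0$ for all $\alpha$, contradicting $0\neq p\in[p]$.
\end{itemize}
This is Proposition~\ref{spherical-m-iso-new} with $m=1$.

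\textbf{The gap in (iii)$\Rightarrow$(i).} Hypothesis (iii) says nothing a priori about $p$. Your rotation carries $[p]$ to $[z_2]$ only if $p$ is a multiple of $az_1-bz_2$, and saying that (i) is ``stated for all $p$ of degree one'' does not supply this.

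The repair is short. If $\deg p\Ge 2$, Lemma~\ref{poly-calculus}(v) gives $(a\mathscr T_{z_1}-b\mathscr T_{z_2})(1)=az_1-bz_2\neq 0$, contradicting (iii); so $\deg p=1$. Then $P^\perp(az_1-bz_2)=(a\mathscr T_{z_1}-b\mathscr T_{z_2})(1)=0$ places $az_1-bz_2$ in $[p]$. The only degree-one polynomials in $[p]$ are multiples of $p$, so $p$ is a multiple of $az_1-bz_2$. This is lighter than the paper's route through Lemma~\ref{poly-calculus-new}.

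With that repair, your rotation argument is a correct and genuinely different proof of (ii)$\Rightarrow$(i). The paper verifies the Stieltjes condition \eqref{stieljes-m-s} of Lemma~\ref{lem-sub-linear}(iii) by explicit computation, in Proposition~\ref{sub-linear-u-invariant} and Remark~\ref{example-Arveson-sub-deg-1}. Your use of $\mathcal U_2$-invariance instead reduces everything to $p=z_2$. There $\mathscr T_{z_2}=0$, and $\mathscr T_{z_1}$ is the Bergman shift for $H^2(\mathbb B^2)$ and the unilateral shift for $H^2_2$. The same reduction proves Proposition~\ref{sub-linear-u-invariant} in one line.
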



The implications (i)$\Rightarrow$(ii) and (ii)$\Rightarrow$(i) of Theorem~\ref{Hardy-ball-q-sub} fail for $\mathcal U_2$-invariant Hilbert modules on $\mathbb B^2$ (see Examples~\ref{fail-linear-ball} and \ref{Diri-module}). Nevertheless, a key ingredient in the proof of Theorem~\ref{Hardy-ball-q-sub}, namely, that the multiplication $d$-tuple $\mathscr M_z$ is an $m$-isometry, ensures that $\deg\,p \Le m$ (see Proposition~\ref{spherical-m-iso-new}). 
An important property of the Drury–Arveson module $H^2_d$ is that if $f(z) = g(z_1)$ for some $g \in H^2(\mathbb D),$ then $f \in H^2_d$ and $\|f\| = \|g\|_{H^2(\mathbb D)}$ (see \cite[Proposition~2.4]{H2023}). This property (cf. \eqref{weak-m-iso}) plays a crucial role in the proof of Theorem~\ref{Hardy-ball-q-sub}.
The proofs of the implication (iii) $\Rightarrow$ (ii) in Theorems~\ref{Hardy-q-sub} and \ref{Hardy-ball-q-sub} rely on decompositions of homogeneous polynomials that are valid only in two complex variables (see Proposition\,A and Remark\,A).

We next state some necessary or sufficient conditions for the subnormality of quotient modules of $\mathbb T^d$-invariant Hilbert modules. 
In the case where $\mathcal M$ is a principal homogeneous submodule, we have the following general result (see \cite[Lemma~5.5]{DGS2020} for a similar phenomenon in the context of boundary representations):

\begin{proposition} \label{rotation-q-sub}
Let $p$ be a nonconstant homogeneous polynomial in $\mathbb C[z_1, \ldots, z_d]$, and let $\mathscr H_\kappa$ be a $\mathbb T^d$-invariant Hilbert module.
If the quotient module $\mathscr H_\kappa/[p]$ is subnormal, then 
$p$ is square-free.  
\end{proposition}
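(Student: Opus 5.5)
The plan is to argue by contraposition through hyponormality. Suppose $p$ is not square-free, so $q^2$ divides $p$ for some nonconstant $q$. Since $p$ is homogeneous, every factor of $p$ is homogeneous: the product of the lowest-degree components of two factors equals the lowest-degree component of their product, and likewise for the highest. So I may write $p=q^2 r$ with $q$ homogeneous of degree $k\Ge 1$ and $r$ homogeneous. Let $m=\deg p$ and $\mathcal M=[p]$. If $\mathscr H_\kappa/[p]$ were subnormal, then by definition $S:=q(\mathscr T_z)$ would be a subnormal operator, hence hyponormal. I will produce a vector $g$ with $S^2g=0$ but $Sg\neq 0$, which contradicts hyponormality.

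\emph{Hyponormal operators have no nilpotent part of order two.} Let $S$ be hyponormal and $S^2g=0$. Then $\|S^*(Sg)\|\Le \|S(Sg)\|=0$, so $\|Sg\|^2=\inp{S^*Sg}{g}=0$, that is, $Sg=0$.

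\emph{Construction of $g$.} Take $g=P^\perp r\in\mathscr H_\kappa/[p]$. By Lemma~\ref{poly-calculus}(ii), $u(\mathscr T_z)f=P^\perp u(\mathscr M_z)f$ for $f$ in the quotient. Since $\mathcal M$ is invariant under $\mathscr M_z$, we have $P^\perp u(\mathscr M_z)P=0$, hence $P^\perp u(\mathscr M_z)P^\perp=P^\perp u(\mathscr M_z)$. Applying this twice gives
\[
S^2 g=P^\perp q^2 r=P^\perp p=0, \qquad Sg=P^\perp(qr).
\]
It therefore remains to show that $qr\notin[p]$.

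\emph{Showing $qr\notin[p]$ (the main step).} For $n\in\mathbb Z_+$, let $E_n$ be the orthogonal projection onto the span of the monomials of degree $n$. This span is finite dimensional, and by \eqref{onbasis} the monomials are orthogonal, so $E_n$ is bounded and simply extracts the degree-$n$ homogeneous component of a function. If $f$ is a finite linear combination of the $z^\alpha p$, then $E_nf=p\,h_n$ with $h_n$ homogeneous of degree $n-m$, and $E_nf=0$ for $n<m$. Passing to limits, the condition $E_nf=0$ for all $n<m$ persists on the closure $[p]$. Now $qr$ is a nonzero homogeneous polynomial of degree $m-k<m$, so $E_{m-k}(qr)=qr\neq 0$. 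Hence $qr\notin[p]$, so $Sg\neq 0$, which contradicts the first step. Consequently $p$ must be square-free.

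The only delicate points are the compression identity $P^\perp u(\mathscr M_z)P^\perp=P^\perp u(\mathscr M_z)$ and the degree argument in the last step, which relies on the orthogonality of monomials guaranteed by $\mathbb T^d$-invariance. I expect the latter to be the step requiring the most care.
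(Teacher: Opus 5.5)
Your proof is correct. Its skeleton matches the paper's: the same witness $r$ (which already lies in $\mathscr H_\kappa/[p]$ by Lemma~\ref{poly-calculus}(iv), so $P^\perp r=r$), the same identity $q^2(\mathscr T_z)r=0$, and the same obstruction $q(\mathscr T_z)r=qr\neq 0$. Your degree-projection argument for $qr\notin[p]$ is exactly the first half of Lemma~\ref{poly-calculus}(iv), which you could simply cite.

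The genuine difference is the mechanism that turns $q^2(\mathscr T_z)r=0$ into $q(\mathscr T_z)r=0$.

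\textbf{The paper's route.} It passes to the minimal normal extension $N$ of the tuple $\mathscr T_z$. It takes the scalar measure $\mu_r$ on $\sigma(N)$ that represents the joint moments of $r$, and applies Lemma~\ref{lem-support} to get $\supp\,\mu_r\subseteq Z(q)$ from $\int|q^2|^2\,\D\mu_r=0$.

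\textbf{Your route.} You use only that the single operator $S=q(\mathscr T_z)$ is hyponormal, so $\ker S^2=\ker S$.

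\textbf{What each buys.} Your argument is more elementary. It needs no joint normal extension, Taylor spectrum or representing measure, and it uses the paper's definition of a subnormal module directly. It also proves more. The conclusion holds as soon as $q(\mathscr T_{z,[p]})$ satisfies $\ker S^2=\ker S$ for every $q$ with $q^2\mid p$. That is the case if $S$ is hyponormal, or even merely paranormal, since $\|Sg\|^2\Le\|S^2g\|\,\|g\|$ fails for your $g$. The paper's measure-theoretic route gives nothing extra here. Its advantage is uniformity: it is the same template reused in Propositions~\ref{polydisc-linear} and \ref{spherical-m-iso-new}, where joint moment information is genuinely needed.
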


As shown by Theorems~\ref{Hardy-q-sub} and \ref{Hardy-ball-q-sub}, the converse of Proposition~\ref{rotation-q-sub} is not true.
The following result yields numerous examples of non-isomorphic subnormal quotient modules of a 
$\mathcal U_2$-invariant Hilbert module (cf. Lemma~\ref{poly-calculus}(iii)).   
\begin{proposition} \label{sub-linear-u-invariant}
Let $p$ be a homogeneous polynomial in $\mathbb C[z_1, z_2]$ of degree $1,$ and 
let $\mathscr H_\kappa$ be a $\mathcal U_2$-invariant Hilbert module. 
If $\mathscr H_\kappa$ is subnormal, then the quotient module $\mathscr H_\kappa /[p]$ is also subnormal.
\end{proposition}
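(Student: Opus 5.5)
The plan is to reduce, by a unitary change of variables, to the case $p=z_1$, and then compute the quotient module explicitly. Write $p(z)=az_1+bz_2$ with $(a,b)\neq(0,0)$. After scaling, which does not change $[p]$, we may assume $|a|^2+|b|^2=1$. Then there is $U\in\mathcal U_2$ with $p\circ U = z_1$; for instance, take $U$ whose first column is $(\overline a,\overline b)^T$ and whose second column is any unit vector orthogonal to it.

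\emph{Step 1 (unitary implementation of $\mathcal U_2$).} For $U\in\mathcal U_2$, define $\Gamma_U f:=f\circ U$ on $\mathscr H_\kappa$. The identity $\kappa(U\!\cdot\! z,U\!\cdot\! w)=\kappa(z,w)$ gives $\Gamma_U\kappa(\cdot,w)=\kappa(\cdot,U^{*}w)$. Hence $\Gamma_U$ preserves inner products of kernel functions, and since kernel functions span a dense set, $\Gamma_U$ extends to a unitary on $\mathscr H_\kappa$ with inverse $\Gamma_{U^*}$. One checks directly that $\Gamma_U \mathscr M_{q}\Gamma_U^{*}=\mathscr M_{q\circ U}$ for every polynomial $q$. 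In particular, $\Gamma_U\mathscr M_{z_j}\Gamma_U^*=\sum_k u_{jk}\mathscr M_{z_k}$, and $\Gamma_U$ maps $[p]$ onto $[p\circ U]=[z_1]$, using the density of polynomials ({\tiny C4}). Consequently $\Gamma_U$ also maps $\mathscr H_\kappa/[p]$ onto $\mathscr H_\kappa/[z_1]$. It therefore intertwines the compressions: $\Gamma_U \mathscr T_{z,[p]}\Gamma_U^*$ is the tuple obtained from $\mathscr T_{z,[z_1]}$ by the invertible linear transformation $U$ (acting on the $2$-tuple).

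\emph{Step 2 (subnormality is invariant under unitary equivalence and invertible linear changes of the tuple).} If $N=(N_1,N_2)$ is a normal extension of $T$, then $(\sum_k u_{jk}N_k)_j$ is a commuting tuple of normal operators extending $(\sum_k u_{jk}T_k)_j$. This holds because linear combinations of commuting normal operators are normal, by Fuglede's theorem. So it suffices to show that $\mathscr H_\kappa/[z_1]$ is subnormal.

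\emph{Step 3 (the case $p=z_1$).} By \eqref{onbasis}, $[z_1]=\bigvee\{z^\alpha:\alpha_1\ge1\}$, so $\mathscr H_\kappa/[z_1]=\bigvee\{z_2^n:n\ge0\}=:\mathcal N$. Since $z_1z_2^n\in[z_1]$, we get $\mathscr T_{z_1}=0$. Since $\mathcal N$ is invariant under $\mathscr M_{z_2}$, we get $\mathscr T_{z_2}=\mathscr M_{z_2}|_{\mathcal N}$. Let $(N_1,N_2)$ on $\mathcal K\supseteq\mathscr H_\kappa$ be a normal extension of $\mathscr M_z$. Then $(0,N_2)$ on $\mathcal K$ is a commuting normal pair, and its restriction to $\mathcal N$ is $(0,\mathscr M_{z_2}|_{\mathcal N})=\mathscr T_{z,[z_1]}$. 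Hence the quotient is subnormal.

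The only point requiring care is Step 1, namely that $\Gamma_U$ is well defined and unitary and that it carries $[p]$ onto $[z_1]$ (and hence the quotient onto the quotient). This is where $\mathcal U_2$-invariance, rather than mere $\mathbb T^2$-invariance, is essential.
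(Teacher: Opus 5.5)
Your argument is correct, and it takes a genuinely different route from the paper's.

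\textbf{What the paper does.} It handles $p=z_j$ exactly as in your Step~3, via Lemma~\ref{lem-sub-linear}(i). For $p=z_1-az_2$ with $a\neq 0$, however, it never changes variables. Instead it uses the weighted-shift model of Lemma~\ref{lem-sub-linear}(ii)--(iii) to turn subnormality into the Stieltjes condition \eqref{stieljes-m-s}. It then combines three ingredients:
\begin{enumerate}
\item[(a)] the structure theorem of \cite{CY2015} for $\mathcal U_2$-invariant kernels, $\|z^\alpha\|=\gamma_{|\alpha|}\sqrt{\alpha!/(|\alpha|+1)!}$;
\item[(b)] the fact that subnormality of $\mathscr H_\kappa$ makes $\{\gamma_k^2\}_{k\Ge 0}$ a Stieltjes moment sequence;
\item[(c)] closure of Stieltjes moment sequences under products \cite{BD2004}.
\end{enumerate}

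\textbf{What your route does instead.} You use the $\mathcal U_2$-symmetry at the operator level: the unitary $\Gamma_U$ together with Fuglede's theorem makes every linear $p$ equivalent to $z_1$. This removes all moment-sequence computations, and it gives more than the statement.
\begin{enumerate}
\item[(1)] It works verbatim for $\mathcal U_d$-invariant modules in every dimension. So $H^2(\mathbb B^d)/[p]$ is subnormal for every linear $p$, a case the paper lists as open for $d\Ge 3$.
\item[(2)] It shows that the only thing that matters is subnormality of $\mathscr M_{z_2}$ on $\bigvee\{z_2^n\}$. Indeed, for $\mathcal U_2$-invariant kernels the sequence in \eqref{stieljes-m-s} is exactly $\|z_2^n\|^2(1+|a|^2)^{-n}$.
\item[(3)] Point (2) explains Remark~\ref{example-Arveson-sub-deg-1} in one stroke: $H^2_2$ is not subnormal, but its $z_2$-slice is the Hardy shift. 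It also explains Example~\ref{Diri-module}, where the slice is the Dirichlet shift.
\item[(4)] It even shows that $H^2_d/[p]$ is not subnormal for linear $p$ when $d\Ge 3$. There the quotient by $[z_1]$ carries the Drury--Arveson $(d-1)$-shift, which is not subnormal.
\end{enumerate}

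\textbf{What the paper's route buys.} It rests on Lemma~\ref{lem-sub-linear}(iii), which is valid for arbitrary $\mathbb T^2$-invariant modules. Examples are $H^2(\mathbb D^2)$ in Theorem~\ref{Hardy-q-sub} and the tensor products of Example~\ref{exam-m-tensor}, where no unitary change of variables is available and explicit weights are needed.

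\textbf{One detail to write out.} In Step~1, check that the unitary extension of $\kappa(\cdot,w)\mapsto\kappa(\cdot,U^*w)$ agrees with $f\mapsto f\circ U$ on all of $\mathscr H_\kappa$. This follows from continuity of point evaluations.
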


Here is the outline of the paper.
In Section~\ref{S4}, we provide proofs of Theorems~\ref{Hardy-q-sub} and~\ref{Hardy-ball-q-sub}, along with Propositions~\ref{rotation-q-sub} and \ref{sub-linear-u-invariant}. The requisite preliminary results are developed in Section~\ref{S3}. These include several noteworthy properties of principal homogeneous submodules of a $\mathbb T^d$-invariant Hilbert module (see Lemmata~\ref{poly-calculus} and~\ref{poly-calculus-new}), the construction of an orthonormal basis for a three-parameter family of quotient modules (see Lemma~\ref{coro-generators}), and a criterion for subnormality formulated in terms of a concrete Stieltjes moment problem (see Lemma~\ref{lem-sub-linear}). 
Section~\ref{S5} presents a collection of examples illustrating the main results of the paper (see Examples~\ref{fail-linear-bidisc}-\ref{exam-m-tensor}). 
We conclude the paper with several unresolved problems. 
Finally, the appendix establishes two strictly $2$-dimensional facts concerning the homogeneous polynomials used in the main text (see Propositions\,A and B) and discusses some applications to the Hilbert modules (see Theorem\,A and Corollary\,A). 
Table~\ref{Table1} summarizes the main results of this paper. Here, $p$ denotes a nonzero homogeneous polynomial in $\mathbb C[z_1, \ldots, z_d]$, 
$\mathscr S_d$ denotes the collection of subnormal Hilbert modules on $\Omega$,
and $\mathscr T\!\mathscr S_d$ (resp., $\mathscr U\!\mathscr S_d$) represents the collection of subnormal $\mathbb T^d$-invariant (resp., subnormal $\mathcal U_d$-invariant) Hilbert modules. 

\begin{table}[htbp]
\centering
\small
\setlength{\tabcolsep}{4pt}
\renewcommand{\arraystretch}{1}
\begin{tabular}{|c|c|c|}
\hline
$\mathscr H_\kappa$ 
& $\deg \,p \Le 1 \Rightarrow \mathscr H_\kappa/[p] \in \mathscr S_d$ 
& $\mathscr H_\kappa/[p] \in \mathscr S_d \Rightarrow \deg \,p \Le 1$ \\
\hline
$H^2(\mathbb D^2), H^2(\mathbb B^2), H^2_2$ 
& $\checkmark$ & $\checkmark$ \\
\hline
$H^2(\mathbb D^d), H^2(\mathbb B^d), H^2_d,\ d \Ge 3$ 
& $?$ & $\checkmark$ \\
\hline
$\mathscr H_\kappa \in \mathscr T\!\mathscr S_2$
& $\times$ & $\times$ \\
\hline
$\mathscr H_\kappa \in \mathscr U\!\mathscr S_2$
& $\checkmark$ & $\times$ \\
\hline
\end{tabular}
\vskip.15cm
\caption{\label{Table1} Subnormality of the quotient modules}
\end{table}

\section{Preparatory results} \label{S3}

We list below several basic properties of the $d$-tuple $\mathscr T_z$ on $\mathscr H_\kappa/[p]$,
the first of which appears in \cite[Proof of Theorem~6.2]{GW2007} without a proof, while a vast generalization of the third part for the Hardy module $H^2(\mathbb D^d)$ is given in \cite[Corollary~9]{DY1990}.

\begin{lemma}\label{poly-calculus} 
    Let $p, q, r \in \mathbb C[z_1, \ldots, z_d]$ be polynomials with $p$ nonconstant, and let $\mathscr H_\kappa$ be a $\mathbb T^d$-invariant Hilbert module. Then the following statements are true$:$
\begin{enumerate}
\item[$(\mathrm{i})$] $p(\mathscr T_{z, [p]})=0,$
\item[$(\mathrm{ii})$] $q(\mathscr T_{z, [p]})(P^\perp f) = P^\perp (qf)$ for any $f \in \mathscr H_\kappa,$ 
\item[$(\mathrm{iii})$] if $\mathscr T_{z, [p]}$ and $\mathscr T_{z, [q]}$ are unitarily equivalent, then $[p]=[q],$
\item[$(\mathrm{iv})$] if $p, q, r$ are homogeneous polynomials with $\deg\, r < \deg\, p$, then $r$ belongs to $\mathscr H_\kappa/[p],$ and  
\beqn
q(\mathscr T_{z, [p]})r = \begin{cases} qr & \mbox{if}~\deg \,q < \deg \, p - \deg \, r, \\
0 & \mbox{if}~p=qr,
\end{cases}
\eeqn
\item[$(\mathrm{v})$] if $p, r$ are homogeneous polynomials such that 
$\deg\, p \Ge \deg\,r + 2,$ then 
\beqn
\mathscr T_{z_j, [p]}r=z_jr, \quad j =1, \ldots, d.
\eeqn
\end{enumerate}
\end{lemma}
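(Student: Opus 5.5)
The plan is to prove (ii) first, since it drives everything else, and then obtain (i), (iv), (v) from it together with $\mathbb T^d$-invariance and orthogonality of monomials; (iii) is handled separately.

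\emph{Part (ii).} Compressions of $\mathscr M_{z_j}$ to the co-invariant subspace $\mathscr H_\kappa/[p]$ multiply like polynomials. Since $[p]$ is invariant under each $\mathscr M_{z_j}$, we have $P^\perp \mathscr M_{z_j} P = 0$, hence $P^\perp \mathscr M_{z_j} = P^\perp \mathscr M_{z_j} P^\perp$. It follows that
\[
\mathscr T_{z_i}\mathscr T_{z_j}P^\perp f = P^\perp \mathscr M_{z_i}P^\perp \mathscr M_{z_j} f = P^\perp \mathscr M_{z_i}\mathscr M_{z_j} f .
\]
This gives commutativity, and induction on monomials plus linearity gives $q(\mathscr T_z)P^\perp f = P^\perp(qf)$.

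\emph{Part (i).} Take $f = P^\perp g$ and apply (ii): $p(\mathscr T_z)f = P^\perp(pg)$. For polynomial $g$ we have $pg \in [p]$, so this is $0$. Density of polynomials (C4) and boundedness extend this to all $f$.

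\emph{Part (iii).} Suppose $U\mathscr T_{z,[p]}U^* = \mathscr T_{z,[q]}$. By (i) and unitary equivalence, $p(\mathscr T_{z,[q]}) = 0$. Applying (ii) with $f=1$ gives $P^\perp_{[q]}(p) = p(\mathscr T_{z,[q]})P^\perp_{[q]}1 = 0$, so $p \in [q]$ and therefore $[p]\subseteq[q]$. Symmetry gives equality.

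\emph{Part (iv).} The key observation is that $[p]$ is spanned by $\{z^\alpha p\}$, and each $z^\alpha p$ is homogeneous of degree $|\alpha|+\deg p \Ge \deg p$. Monomials of different total degree are orthogonal by \eqref{onbasis}, so every homogeneous polynomial of degree $< \deg p$ is orthogonal to $[p]$; thus $r \in \mathscr H_\kappa/[p]$. By (ii), $q(\mathscr T_z)r = P^\perp(qr)$. If $\deg q < \deg p - \deg r$, then $qr$ is a sum of homogeneous pieces of degree $<\deg p$, hence lies in the quotient, and $P^\perp(qr)=qr$. If $p = qr$, then $P^\perp(qr) = P^\perp p = 0$.

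\emph{Part (v).} Write $P^\perp(z_j r) = z_j r$ because $\deg(z_j r) = \deg r + 1 < \deg p$; this is (iv) with $q = z_j$. The hypothesis $\deg p \Ge \deg r + 2$ is exactly what makes the inequality strict.

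The only subtle point is the orthogonality claim behind (iv). The reason it holds is that $[p]$ is the closed span of homogeneous polynomials of degree $\Ge \deg p$, and the orthogonal basis of monomials separates degrees. This remains true after taking the closure, since orthogonality to a set passes to its closed span. Everything else is formal.
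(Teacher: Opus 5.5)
Your proposal is correct and follows the paper's proof essentially step by step. Part (ii) rests on $P^\perp \mathscr M_{z_j} P = 0$, part (iii) applies (i) and (ii) to $P^\perp_{[q]}(1)$, and parts (iv)--(v) use the orthogonality of homogeneous polynomials of different degrees. The only deviation is in (i): the paper obtains it from the adjoint identity \eqref{adjoint}, writing $\langle p(\mathscr T_z)f, g\rangle = \langle pf, g\rangle = 0$, whereas you derive it from (ii) together with density of $\{P^\perp g : g \text{ a polynomial}\}$. Both routes are valid, and yours makes explicit the density step that the paper's claim $pf \in [p]$ uses without comment.
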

\begin{proof} 
(i) By \eqref{adjoint}, for $f, g \in \mathscr H_\kappa /[p],$ 
    \begin{align*}
    \inp{p(\mathscr T_z)f}{g}=\inp{f}{\overline{p}(\mathscr T_z^*)g}
        =\inp{f}{\overline{p}(\mathscr M_z^*)g}
        =\inp{p(\mathscr M_z)f}{g}
        =\inp{pf}{g}
        =0.
    \end{align*}
    Since $f, g$ are arbitrary, we obtain (i).

(ii) 
Let $f \in \mathscr H_\kappa.$ Since $z_iPf \in [p],$ 
\beq
\label{subeqn}
\mathscr T_{z_i}(P^\perp f)=P^\perp (z_if-z_iPf)=P^\perp (z_if), \quad i=1, \ldots, d.  
\eeq
After replacing $f$ by $z_j f \in \mathscr H_\kappa,$ we obtain
\beqn
\mathscr T_{z_i}(P^\perp(z_jf))=P^\perp (z_iz_jf), \quad i, j=1, \ldots, d.
\eeqn
It now follows from \eqref{subeqn} that $\mathscr T_{z_i}\mathscr T_{z_j}(P^\perp f)=P^\perp (z_iz_jf)$ for $i, j=1, \ldots, d.$
Applying this repeatedly, we obtain $z^\alpha(\mathscr T_z)(P^\perp f) = P^\perp (z^\alpha f)$ for any $\alpha \in \mathbb Z^d_+.$ Hence, by the linearity of $P^{\perp},$ we deduce (ii). 

(iii) 
Suppose $\mathscr T_{z, [p]}$ is unitarily equivalent to $\mathscr T_{z, [q]}.$ Then there exists a unitary operator $\mathscr U : \mathscr H_\kappa/[p] \to \mathscr H_\kappa/[q]$ such that $\mathscr U \mathscr T_{z_j, [p]} =\mathscr T_{z_j, [q]}\mathscr U,$ $j=1, \ldots, d.$ It now follows from part (i) that 
   $p(\mathscr T_{z, [q]}) =\mathscr U p(\mathscr T_{z, [p]})\mathscr U^{-1}=0,$ and hence by part (ii), 
\beqn
 P^\perp_{[q]}(p) = p(\mathscr T_{z, [q]})P^\perp_{[q]}(1)=0. 
\eeqn
This implies that $p \in [q]$, and hence $[p] \subseteq [q]$. Similarly, we obtain the inclusion $[q] \subseteq [p]$, which yields the equality $[p] = [q]$.

(iv) Let $s \in \mathbb C[z_1, \ldots, z_d]$ be a homogeneous polynomial such that $\deg\, s < \deg\, p.$ For $\alpha \in \mathbb Z^d_+,$ the degree of any monomial in $s$ (equal to $\deg\, s$) is less than the degree of any monomial in $pz^\alpha$ (equal to $\deg\,p+|\alpha|$). It follows that $\inp{s}{pz^{\alpha}}=0$ for all $\alpha \in \mathbb Z^d_+,$ and as the linear span of $\{pz^{\alpha}: \alpha \in \mathbb Z^d_+\}$ is a dense subspace of $[p],$ 
we conclude that $s \in \mathscr H_\kappa /[p].$

Assume now that $p, q, r$ are homogeneous polynomials with $\deg\, r < \deg\, p$. 
If $\deg\,q < \deg\,p - \deg \, r,$ then applying
the fact in the last paragraph to $s \in \{r, qr\}$, we conclude that 
$r, qr \in \mathscr H_\kappa /[p],$ and hence by part (ii), $q(\mathscr T_z)r = P^\perp (qr)=qr.$ 
Finally, if $q$ is such that $p=qr,$ then $P^\perp (qr)=P^\perp (p)=0.$

(v) This follows by applying part (iv) to $q(z)=z_j$ for $j=1, \ldots, d$. 
\end{proof}

The following lemma is based on the decomposition of a homogeneous polynomial in two variables (see Proposition\,A), a decomposition that fails in the case of three variables (see Remark\,A).
\begin{lemma} \label{poly-calculus-new} 
    Let $p, q \in \mathbb C[z_1, z_2]$ be nonconstant homogeneous polynomials, and let $\mathscr H_\kappa$ be a $\mathbb T^2$-invariant Hilbert module. Then the following statements are true$:$
\begin{enumerate}
\item[$(\mathrm{i})$] if $p \in [q],$ then $q$ is a factor of $p,$ 
\item[$(\mathrm{ii})$] $[p]=[q]$ if and only if $p$ is a constant multiple of $q,$
\item[$(\mathrm{iii})$] if $q(\mathscr T_{z,[p]}) = 0,$ then $\deg \, p \Le \deg \, q.$ 
\end{enumerate}
\end{lemma}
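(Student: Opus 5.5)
The plan is to exploit the orthogonal grading of $\mathscr H_\kappa$ by total degree, which comes from \eqref{onbasis}. For $n \in \mathbb Z_+$, I would let $Q_n$ denote the orthogonal projection of $\mathscr H_\kappa$ onto the finite-dimensional subspace $\mathcal P_n:=\bigvee\{z^\alpha : |\alpha|=n\}$ of homogeneous polynomials of degree $n$. Since $\{\sqrt{a_\alpha}\,z^\alpha\}$ is an orthonormal basis, $Q_n$ simply keeps the monomials of degree $n$ in the expansion of $f$.

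The key observation is the following. Let $q$ be homogeneous of degree $k$ and let $\alpha \in \mathbb Z^2_+$. Then $z^\alpha q \in \mathcal P_{|\alpha|+k}$. Hence
\[
Q_n(z^\alpha q) = \begin{cases} z^\alpha q & \mbox{if } |\alpha| = n-k, \\ 0 & \mbox{otherwise}. \end{cases}
\]
By linearity and continuity of $Q_n$, we get that $Q_n([q])$ is contained in the closure of the span of $\{z^\alpha q : |\alpha|=n-k\}$. This span is finite-dimensional and therefore already closed, so it equals $\{qs : s \in \mathcal P_{n-k}\}$ when $n \Ge k$, and it is $\{0\}$ when $n<k$.

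For (i), suppose $p\in[q]$ and $\deg p=n$. Since $p$ is homogeneous, $p=Q_n p$. By the observation, $p=qs$ for some homogeneous polynomial $s$ (with $n \Ge k$, since $p\neq 0$ as it is nonconstant). Thus $q$ is a factor of $p$. I note that this argument does not seem to need the two-variable decomposition of Proposition\,A. If a subtlety turns up, for instance in making precise that $Q_n$ maps $[q]$ into that finite span, I would fall back on factoring $p$ and $q$ into linear factors via Proposition\,A and comparing them. I expect the main point requiring care to be exactly this step: the finite-dimensionality and closedness argument, together with the orthogonality of distinct degrees.

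For (ii), the converse direction is trivial, since a constant multiple generates the same closed span. For the forward direction, suppose $[p]=[q]$. Then $p\in[q]$ and $q\in[p]$, so by (i) each of $p$ and $q$ divides the other. Hence they have equal degree, and $p$ is a constant multiple of $q$.

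For (iii), suppose $q(\mathscr T_{z,[p]})=0$. By Lemma~\ref{poly-calculus}(ii),
\[
P^\perp_{[p]}(q)=q(\mathscr T_{z,[p]})P^\perp_{[p]}(1)=0,
\]
so $q\in[p]$. Applying (i) with the roles of $p$ and $q$ exchanged shows that $p$ divides $q$. Since $q$ is nonconstant, hence nonzero, this gives $\deg p\Le \deg q$.
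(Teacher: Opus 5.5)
Your proof is correct. For (i) it takes a genuinely different route from the paper, while (ii) and (iii) are deduced from (i) exactly as the paper does.

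\textbf{The paper's route for (i).} The paper argues analytically. Norm convergence $qf_n\to p$, with $f_n$ polynomials, gives compact convergence of all derivatives, via the reproducing property, continuity of $\kappa$ and the Weierstrass theorem. Hence $Z(q)\subseteq Z(p)$. The paper then factors $p$ and $q$ into linear factors by Proposition\,A, passes to coordinates adapted to a common factor $az_1-bz_2$, and uses the Leibniz rule to show that this factor occurs in $p$ at least as often as in $q$.

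\textbf{Your route for (i).} You use instead the orthogonal splitting of $\mathscr H_\kappa$ into the spaces $\mathcal P_n$ of homogeneous polynomials of degree $n$. By continuity of $Q_n$, the set $Q_n([q])$ lies in the closure of $q\,\mathcal P_{n-k}$. That space is finite-dimensional, hence closed, so $p=Q_np=qs$. The step you flagged is rigorous as written, so no fallback to Proposition\,A is needed.

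\textbf{What your approach buys.} Your argument is shorter and purely Hilbert-space-theoretic. It never uses $d=2$; it only needs homogeneous polynomials of different degrees to be orthogonal. It therefore shows that $[q]\cap\mathbb C[z_1,\ldots,z_d]=q\,\mathbb C[z_1,\ldots,z_d]$ for every nonzero homogeneous $q$ in any $\mathbb T^d$-invariant module. This gives (i)--(iii) in every dimension. In (iii) it even covers nonhomogeneous $q$: each component $Q_nq$ of $q\in[p]$ lies in $p\,\mathcal P_{n-\deg p}$, so $p$ divides $q$. This settles, affirmatively, the second open problem listed at the end of the paper.

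\textbf{What the paper's approach buys.} The paper's argument uses only that norm convergence implies locally uniform convergence, not the orthogonality of monomials. So its vanishing-order mechanism does not depend on the rotation-invariance of the norm. As written, however, it is confined to two variables through Proposition\,A.
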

\begin{proof}
(i) Assume that $p \in [q].$ 
Thus, there exists a sequence of polynomials $\{f_n\}_{n \Ge 0}$ in $\mathscr H_\kappa$ such that $\{qf_n\}_{n \Ge 0}$ converges to $p$ in $\mathscr H_\kappa.$ This, combined with 
\beq \label{rkp}
\inp{f}{\kappa(\cdot, w)} = f(w), \quad f \in \mathscr H_\kappa, ~w \in \Omega,
\eeq
and the joint continuity of $\kappa$ (see \eqref{kappa-cts}), implies that $qf_n$ converges to $p$ compactly on $\Omega.$
By the Weierstrass convergence theorem (see \cite[Theorem~1.7.1]{JP2008}), 
\beq \label{WCT-a}
\mbox{$\{D^\alpha(qf_n)\}_{n \Ge 0}$ converges compactly to $D^\alpha(p)$ for all $\alpha \in \mathbb Z^2_+,$} 
\eeq
where $D^\alpha = \frac{\partial^{|\alpha|}}{\partial z^{\alpha_1}_1 \partial z^{\alpha_2}_2},$ $\alpha = (\alpha_1, \alpha_2) \in \mathbb Z^2_+.$  
Thus, $Z(q) \subseteq Z(p).$ By Proposition\,A, there exist scalars $a, b \in \mathbb C,$ not both zero, such that
\beqn
p(z)=(az_1-bz_2)^k r(z), \quad q(z)=(az_1-bz_2)^l s(z), 
\eeqn
where $k, l \in \mathbb Z_+$ and $r, s \in \mathbb C[z_1, z_2]$ are such that $Z(r)\cap Z(az_1- bz_2) =\{0\}$ and $Z(s)\cap Z(az_1- bz_2) = \{0\}$. Introduce new coordinates
\beqn
w_1:=az_1-bz_2, ~w_2:=\begin{cases} z_2 & \mbox{if~}a \neq 0, \\
z_1 & \mbox{if}~b \neq 0.
\end{cases}
\eeqn  
With these new conformal coordinates, near the origin, $p$ and $q$ takes the form
\beqn
p(w)=w^k_1 r(w), \quad q(w)=w^l_1 s(w), 
\eeqn
where $r(w)$ and $s(w)$ do not vanish near $(0, w_2)$ for $w_2 \neq 0.$
By \eqref{WCT-a}, 
\beq \label{cpt-cgn}
\Big\{\frac{\partial^m(w^l_1 s f_n)}{\partial w^m_1}\Big\}_{n \Ge 0}~\mbox{converges compactly to}~\frac{\partial^m (w^k_1 r)}{\partial w^m_1}~\mbox{for any integer}~m \Ge 1. 
\eeq
By the general Leibniz rule, 
\beq \label{GLR}
\frac{\partial^{k} (w^k_1 r)}{\partial w^{k}_1}=
\sum_{j=0}^{k}\binom{k}{j}\frac{\partial^{k-j}w^k_1}{\partial w^{k-j}_1} \frac{\partial^j r}{\partial w^j_1}=k!r + f 
\eeq
for some $f$ vanishing near $(0, w_2),$ $w_2 \neq 0.$ Thus, if $k < l,$ then by another application of the general Leibniz rule,  
\beqn
\lim_{n \rar \infty} \frac{\partial^{k}(w^l_1 s f_n)}{\partial w^{k}_1}(0, w_2)=0 \neq 
k!r(0, w_2) \overset{\eqref{GLR}}=\frac{\partial^{k} (w^k_1 r)}{\partial w^{k}_1}(0, w_2), \quad w_2 \neq 0,
\eeqn
since $r(0, w_2) \neq 0$ for $w_2 \neq 0$. This contradiction to \eqref{cpt-cgn} shows that $k \Ge l.$ Repeating this argument with every distinct factor of $q,$ we deduce that $q$ is a factor of $p.$ 

(ii) If $p$ is a constant multiple of $q$, then $[p]=[q].$ Now, if $[p]=[q],$ then $p \in [q]$ and $q \in [p],$ and hence by  (i) (after interchanging the roles of $p$ and $q$), $p$ is a constant multiple of $q$.  

(iii) Suppose that $q(\mathscr T_{z,[p]})=0.$ By Lemma~\ref{poly-calculus}(ii),
	\beqn
	P^\perp_{[p]}(q) = q(\mathscr T_{z, [p]})P^\perp_{[p]}(1)=0,
	\eeqn which implies that $q \in [p].$ By (i), $p$ is a factor of $q,$ and hence $\deg \, p \Le \deg \, q$.
\end{proof}

The following result proves the equivalence between conditions (ii) and (iii) in Theorems~\ref{Hardy-q-sub} and~\ref{Hardy-ball-q-sub}.

\begin{proposition} \label{lemma-main-thm}
Let $p$ be a nonconstant homogeneous polynomial in $\mathbb C[z_1, z_2],$
and let $\mathscr H_\kappa$ be a $\mathbb T^2$-invariant Hilbert module. Then the following are equivalent$:$
\begin{enumerate}
    \item[$\mathrm{(i)}$] $\deg \, p=1$,
    \item[$\mathrm{(ii)}$] there exist $a, b \in \mathbb C,$ not both zero, such that $a\mathscr T_{z_1, [p]}=b\mathscr T_{z_2, [p]}.$
\end{enumerate}
\end{proposition}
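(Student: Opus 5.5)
For (i)$\Rightarrow$(ii), suppose $\deg\,p=1$, so $p(z)=az_1-bz_2$ for some $a,b\in\mathbb C$, not both zero. By Lemma~\ref{poly-calculus}(i), $p(\mathscr T_{z,[p]})=0$. Since the polynomial functional calculus is linear, this reads $a\mathscr T_{z_1,[p]}-b\mathscr T_{z_2,[p]}=0$, which is (ii). Nothing beyond Lemma~\ref{poly-calculus}(i) is needed here.

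For (ii)$\Rightarrow$(i), assume $a\mathscr T_{z_1,[p]}=b\mathscr T_{z_2,[p]}$ with $(a,b)\neq(0,0)$. Set $q(z):=az_1-bz_2$. This is a nonconstant homogeneous polynomial of degree $1$, and $q(\mathscr T_{z,[p]})=a\mathscr T_{z_1,[p]}-b\mathscr T_{z_2,[p]}=0$. Lemma~\ref{poly-calculus-new}(iii) applies because $\mathscr H_\kappa$ is $\mathbb T^2$-invariant, and it gives $\deg\,p\Le\deg\,q=1$. Since $p$ is nonconstant and homogeneous, $\deg\,p\Ge 1$, so $\deg\,p=1$.

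All of the real difficulty sits inside Lemma~\ref{poly-calculus-new}, which relies on the two-variable factorization in Proposition\,A. Given that lemma, both directions are one-line consequences, and I expect no obstacle beyond checking that $q$ satisfies its hypotheses (nonconstant and homogeneous), which holds because $(a,b)\neq(0,0)$.

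Lemma~\ref{poly-calculus-new}(iii) could also be avoided in this case. Suppose $\deg\,p\Ge 2$. By Lemma~\ref{poly-calculus}(iv) (or (v) when $\deg\,p\Ge 2$, with $r=1$), we get $\mathscr T_{z_j,[p]}1=z_j$, so $(a\mathscr T_{z_1}-b\mathscr T_{z_2})1=az_1-bz_2\neq 0$ in $\mathscr H_\kappa$. This contradicts (ii), so $\deg\,p=1$. I would present this direct argument, since it uses only Lemma~\ref{poly-calculus}.
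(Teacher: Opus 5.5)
Your proof is correct. The first version you give for (ii)$\Rightarrow$(i) is exactly the paper's proof: Lemma~\ref{poly-calculus}(i) for one direction, and Lemma~\ref{poly-calculus-new}(iii) applied to $q=az_1-bz_2$ for the other.

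The direct argument you say you would actually present is a genuinely different and more elementary route. The paper's route goes through Lemma~\ref{poly-calculus-new}(iii), which it derives from part (i) of that lemma. That proof uses compact convergence of $qf_n$, the Weierstrass theorem, a Leibniz-rule computation in adapted coordinates, and the two-variable factorization of Proposition\,A. You instead evaluate the relation on the single vector $1\in\mathscr H_\kappa/[p]$ and use only the degree-orthogonality encoded in Lemma~\ref{poly-calculus}(iv)--(v). If $\deg\,p\Ge 2$, then $(a\mathscr T_{z_1,[p]}-b\mathscr T_{z_2,[p]})1=az_1-bz_2\neq 0$, because $z_1,z_2$ are orthogonal nonzero vectors. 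This is the same device the paper itself uses in the proof of Proposition~\ref{polydisc-linear}.

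Your argument therefore works verbatim for $\mathbb T^d$-invariant modules in any number of variables, and for any nonzero linear relation $\sum_j c_j\mathscr T_{z_j,[p]}=0$. Running it with an arbitrary nonzero homogeneous $q$ satisfying $\deg\,q<\deg\,p$ gives $q(\mathscr T_{z,[p]})1=q\neq 0$. This recovers Lemma~\ref{poly-calculus-new}(iii) itself without Proposition\,A.

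What the paper's route offers in return is mainly context. It places the statement alongside the structural facts of Lemma~\ref{poly-calculus-new}(i)--(ii), namely divisibility and the fact that $[p]$ determines $p$ up to a scalar. None of these is actually needed for this proposition.
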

\begin{proof}
(i) $\Rightarrow$ (ii): This follows from Lemma~\ref{poly-calculus}(i).
	
(ii) $\Rightarrow$ (i): By assumption, $q(\mathscr T_{z, [p]})=0,$ where
$q(z_1,z_2)=az_1-bz_2.$ Hence, by
Lemma~\ref{poly-calculus-new}(iii), $\deg \,p \Le \deg \, q=1.$ Since $p$ is nonconstant, $\deg\,p=1.$
\end{proof}

Although only some special cases of Lemma~\ref{coro-generators} are required for the proofs of the main results, we include it for later use in Section~\ref{S5}. Certain special cases of this fact have appeared previously in the literature (see \cite[Section~4.1]{BM2001}, \cite{CM2020, DM1993}, \cite[Section~7.4]{DM2008}, and \cite[Proof of Corollary~2.4]{GW2007}). 

\begin{lemma} \label{coro-generators}
For nonnegative integers $r, s$ and $a \in \mathbb C,$ 
consider the homogeneous polynomial $p(z_1,z_2)= z_1^{r}z_2^s(z_1-az_2).$ Let $\mathscr H_\kappa$ be a $\mathbb T^2$-invariant Hilbert module.  
Consider the collection $\mathcal B$ of homogeneous polynomials given by
    \beq
\displaystyle p^{(j)}_n(z_1, z_2) = z^n_1 z^{j}_2, \quad 0 \Le j \Le s-1, ~n \Ge 0,~ s \Ge 1, \notag\\ \notag
\displaystyle q^{(j)}_n(z_1, z_2) =z^{j}_1 z^n_2, \quad 0 \Le j \Le r-1, ~n \Ge s,~ r \Ge 1, \\ \label{exp-q-r-n}
\displaystyle q^{(r)}_n(z_1, z_2) = \sum_{k=r}^{n-s}\overline{a}^{k-r} \frac{z^k_1 z^{n-k}_2}{\|z^k_1 z^{n-k}_2\|^2}, \quad n\Ge r+s;
 \eeq
if $r=0$ $($resp., $s=0)$, then the corresponding polynomial $q^{(j)}_n$ $($resp., $p^{(j)}_n)$ is excluded from $\mathcal B.$
Then $\mathcal B$ forms an orthogonal basis for the quotient module $\mathscr H_\kappa /[p].$
\end{lemma}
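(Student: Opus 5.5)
The plan is to reduce everything to finite-dimensional computations in each homogeneous component. By \eqref{onbasis}, monomials are pairwise orthogonal, so $\mathscr H_\kappa$ is the orthogonal direct sum of the spaces $\mathcal P_n$ of homogeneous polynomials of degree $n$. Since $p$ is homogeneous of degree $N:=r+s+1$, the generators $z^\alpha p$ are homogeneous of degree $N+|\alpha|$. Hence
\[
[p]=\bigoplus_{n\Ge N} p\,\mathcal P_{n-N},
\]
where each summand is finite dimensional and therefore closed. It follows that
\[
\mathscr H_\kappa/[p]=\bigoplus_{n\Ge 0}\big(\mathcal P_n\ominus p\,\mathcal P_{n-N}\big),
\]
with $\mathcal P_{m}=\{0\}$ for $m<0$. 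So it suffices to show, for each fixed $n$, that the elements of $\mathcal B$ of degree $n$ are mutually orthogonal, nonzero, orthogonal to $p\,\mathcal P_{n-N}$, and equal in number to $\dim\mathcal P_n-\dim\mathcal P_{n-N}=\min(n+1,N)$.

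\textbf{Orthogonality to $[p]$.} The generators of degree $n$ are $z_1^{i}z_2^{n-N-i}p$ for $0\Le i\Le n-N$. Each equals $z_1^{r+i}z_2^{n-r-i}-a\,z_1^{r+i+1}z_2^{n-r-i-1}$, so its monomials $z_1^kz_2^{n-k}$ all satisfy $r\Le k$ and $n-k\Ge s$. The polynomials $p^{(j)}_n$ have $z_2$-exponent $j<s$, and the polynomials $q^{(j)}_n$ with $j<r$ have $z_1$-exponent $j<r$. These monomials are therefore orthogonal to every generator. For $q^{(r)}_n$, I take the inner product with $z_1^{r+i}z_2^{n-r-i}-a z_1^{r+i+1}z_2^{n-r-i-1}$. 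The normalisation $1/\|z_1^kz_2^{n-k}\|^2$ cancels the norms, and the result is $\overline{a}^{\,i}-\overline{a}\cdot\overline{a}^{\,i}\cdot\ldots$; more precisely,
\[
\overline{\overline{a}^{\,i}}\ \text{vs.}\ \bar a\cdot\overline{a^{\,i+1}}\text{-type terms},
\]
which telescope to zero. The conjugations need care when checking that $\inp{q^{(r)}_n}{z_1^{r+i}z_2^{n-r-i}}=\overline{a}^{\,i}$ and $\inp{q^{(r)}_n}{a z_1^{r+i+1}z_2^{n-r-i-1}}=\overline{a}\,\overline{a}^{\,i}$. The index range $r\Le k\Le n-s$ is exactly the range of monomials appearing in the generators, so both endpoints are covered.

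\textbf{Mutual orthogonality and counting.} The vectors $p^{(j)}_n$ ($j<s$), $q^{(j)}_n$ ($j<r$) and $q^{(r)}_n$ live on disjoint sets of monomials, namely $z_2$-degree $<s$, $z_1$-degree $<r$, and $r\Le k\Le n-s$ respectively. They are therefore orthogonal and nonzero; note that the $k=r$ coefficient of $q^{(r)}_n$ is nonzero. Counting in degree $n$: the $p^{(j)}_n$ contribute $\min(s,n+1)$ vectors. The $q^{(j)}_n$ with $j<r$, which require $n\Ge s$, contribute $\min(r,n-s+1)$ when $n\Ge s$. The vector $q^{(r)}_n$ contributes one when $n\Ge r+s$. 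The total is $\min(n+1,N)$, which matches the dimension. Hence in each degree these vectors form an orthogonal basis of $\mathcal P_n\ominus p\,\mathcal P_{n-N}$, and summing over $n$ gives the claim.

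\textbf{Main obstacle.} The main difficulty is the bookkeeping in the count, namely handling the cases $n<s$, $s\Le n<r+s$ and $n\Ge r+s$, together with the degenerate conventions $r=0$ or $s=0$. I will handle these by listing the three cases explicitly. I would also note that $\dim(p\,\mathcal P_{n-N})=n-N+1$ because multiplication by $p\neq0$ is injective.
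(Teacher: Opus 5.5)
\textbf{Overall.} Your strategy is sound. For completeness it takes a genuinely different route from the paper.

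\textbf{Comparison with the paper.} The paper takes $g\perp\mathcal B$ and reads off the coefficient conditions. It then shows that every partial sum of $g$ lies in $[p]$, using that $z_1^l-a^lz_2^l$ is divisible by $z_1-az_2$. You instead use the grading $\mathscr H_\kappa=\bigoplus_n\mathcal P_n$, the identification $[p]=\bigoplus_{n\Ge N}p\,\mathcal P_{n-N}$, and a dimension count $\min(n+1,N)$ in each degree.

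Your count in the three ranges $n<s$, $s\Le n<r+s$, $n\Ge r+s$ is correct, including the cases $r=0$ or $s=0$. The disjoint-support and nonvanishing claims are also fine. One precision: what separates $q^{(j)}_n$ ($j<r$) from the $p^{(j)}_n$ is that its $z_2$-exponent is $\Ge s$, not merely that its $z_1$-exponent is $<r$. Your route gives the extra structural fact $[p]\cap\mathcal P_n=p\,\mathcal P_{n-N}$, at the cost of the case bookkeeping. The paper's argument avoids counting altogether.

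\textbf{The gap.} The one step where the specific coefficients $\overline{a}^{\,k-r}$ of $q^{(r)}_n$ matter is done incorrectly. You expand the generator $z_1^iz_2^{n-N-i}p$ as $z_1^{r+i}z_2^{n-r-i}-a\,z_1^{r+i+1}z_2^{n-r-i-1}$. Since $p=z_1^rz_2^s(z_1-az_2)$, the factor $a$ actually sits on the monomial with the \emph{lower} power of $z_1$:
\[
z_1^iz_2^{n-N-i}p=z_1^{r+i+1}z_2^{n-r-i-1}-a\,z_1^{r+i}z_2^{n-r-i}.
\]
With your expansion the inner product with $q^{(r)}_n$ is $\overline{a}^{\,i}-\overline{a}\cdot\overline{a}^{\,i+1}=\overline{a}^{\,i}-\overline{a}^{\,i+2}$. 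This is nonzero for general $a$, so the claimed ``telescoping to zero'' is false as written. Your stated value $\inp{q^{(r)}_n}{a\,z_1^{r+i+1}z_2^{n-r-i-1}}=\overline{a}\,\overline{a}^{\,i}$ is also wrong; it equals $\overline{a}^{\,i+2}$.

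\textbf{The fix.} With the correct expansion there is no telescoping, only a two-term cancellation. We have $\inp{q^{(r)}_n}{z_1^{r+i+1}z_2^{n-r-i-1}}=\overline{a}^{\,i+1}$ and $\inp{q^{(r)}_n}{a\,z_1^{r+i}z_2^{n-r-i}}=\overline{a}\cdot\overline{a}^{\,i}$, so the difference is $0$. This is exactly the paper's computation.

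\textbf{Why it matters.} This step is load-bearing. Your dimension count only shows that the degree-$n$ elements of $\mathcal B$ span $\mathcal P_n\ominus p\,\mathcal P_{n-N}$ once you know they lie in that subspace. So you must redo this verification correctly; with that fix, the rest of your argument goes through.
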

\begin{proof} 
	We first show that $\mathcal{B}$ is orthogonal to $[p]$. Since polynomials are dense in $\mathscr
	H_\kappa$ (see ({\tiny C4})) it suffices to check that each $p^{(j)}_n$ and $q^{(j)}_n$ is orthogonal to $z^{\alpha}p$ for every $\alpha \in \mathbb Z^2_+$. Since the powers of $z_2$ in $z^{\alpha}p$ is at least $s,$ by \eqref{onbasis}, $\inp{p^{(j)}_n}{z^{\alpha}p}=0$ 
for every $0 \Le j \Le s-1$ and $n \Ge 0.$ 
Similarly, since the powers of $z_1$ in $z^{\alpha}p$ is at least $r,$ $\inp{q^{(j)}_n}{z^{\alpha}p}=0$ for any $0 \Le j \Le r-1$ and $n \Ge s.$ 
Since $q^{(r)}_n$ and $z^{\alpha}p$ are homogeneous polynomials of degree $n$ and $|\alpha|+r+s+1,$ $\inp{q^{(r)}_n}{z^{\alpha}p}=0$ provided $n \neq |\alpha|+r+s+1.$ If $n = |\alpha|+r+s+1$ for some $\alpha \in \mathbb Z^2_+,$ then 
	\beqn
	\inp{q^{(r)}_n}{z^\alpha p} &=& \inp{q^{(r)}_n}{z_1^{\alpha_1+r+1} z_2^{\alpha_2+s} - az_1^{\alpha_1+r} z_2^{\alpha_2+s+1}} \\
&\overset{\eqref{exp-q-r-n}}=& \sum_{k=r}^{n-s}\frac{\overline{a}^{k-r}}{\|z^k_1 z^{n-k}_2\|^2} \, \inp{z^k_1 z^{n-k}_2}{z_1^{\alpha_1+r+1} z_2^{\alpha_2+s}-az_1^{\alpha_1+r} z_2^{\alpha_2+s+1}}\\
&\overset{\eqref{onbasis}}=& 0.
	\eeqn
This shows that $\mathcal B \subseteq \mathscr H_\kappa /[p].$

To see that $\mathscr H_\kappa \ominus \mathcal B$ is contained in $[p],$ let $g(z)=\sum_{\alpha \in \mathbb Z^2_+}\hat{g}_{\alpha} z^\alpha \in \mathscr H_\kappa$ belong to $\mathscr H_\kappa \ominus \mathcal B.$ Note that 
	\beq \label{formula-g-hat-1}
	&& \hat{g}_{n,j} = 0, \quad 0 \Le j \Le s-1, ~n \Ge 0, ~s \Ge 1, \\
\label{formula-g-hat-2}
&&	\hat{g}_{j,n}= 0, \quad 0 \Le j \Le r-1, ~n \Ge s, ~r \Ge 1, \\ \label{formula-g-hat-3}
&& 	
\sum_{k=r}^{n-s} {a}^{k-r} \hat{g}_{k, n-k} =0, \quad n\Ge r+s.
	\eeq
It follows that for every integer $m \Ge r+s,$  
	\beqn
	 \sum_{n=0}^{m}\sum_{k=0}^{n}\hat{g}_{k,n-k} z_1^k z_2^{n-k} 
	&\overset{\eqref{formula-g-hat-1}}{=}& z_2^s \sum_{n=s}^{m}\sum_{k=0}^{n-s}\hat{g}_{k,n-k} z_1^k z_2^{n-k-s} \\
	&\overset{\eqref{formula-g-hat-2}}{=}& z_1^{r}z_2^s  \sum_{n=r+s}^{m}\sum_{k=r}^{n-s}\hat{g}_{k,n-k} z_1^{k-r} z_2^{n-k-s} \\
	& \overset{\eqref{formula-g-hat-3}}=& 
 z_1^{r}z_2^s  \sum_{n=r+s}^{m}\sum_{k=r+1}^{n-s}\hat{g}_{k,n-k} (z_1^{k-r} - {a}^{k-r}z_2^{k-r})z_2^{n-k-s}.
	\eeqn
Since	$z_1^{l} - {a}^{l}z_2^{l},$ $l \Ge 1,$ is divisible by $z_1-az_2$ in $\mathbb C[z_1, z_2],$ the partial sum of $g$ belongs to $[p].$ It follows that $g \in [p],$ which completes the proof.
\end{proof}

Recall that $\{\gamma_\alpha\}_{\alpha \in \mathbb Z^d_+}$ is a {\it Stieltjes moment multi-sequence} (or a {\it Stieltjes moment sequence} when $d = 1$) if there exists a positive Borel measure on $\mathbb R^d_+$ (called a {\it representing measure} of $\{\gamma_\alpha\}_{\alpha \in \mathbb Z^d_+}$) such that
\beqn
\gamma_\alpha = \int_{\mathbb R^d_+} t^\alpha \, \D\mu(t), \quad \alpha \in \mathbb Z^d_+. 
\eeqn
\begin{remark} \label{rmk-exam-used}
For $a \in [0, 1),$ consider the sequence $\big\{\frac{1}{1-a^{n+1}}\big\}_{n \Ge 0}.$ 
Note that $\mu := \sum_{k=0}^{\infty}a^k \delta_{a^{k}}$ defines a finite Borel measure on $[0, 1]$ and satisfies 
\beqn
\frac{1}{1-a^{n+1}}= \sum_{k=0}^{\infty}a^{k(n+1)}=
\int_{[0, 1]} t^n \,\D\mu, \quad n \Ge 0.
\eeqn
Thus $\big\{\frac{1}{1-a^{n+1}}\big\}_{n \Ge 0}$ is a Stieltjes moment sequence. \eof
\end{remark}
The next lemma is key to deducing the subnormality of the quotient module $\mathscr H_\kappa/[p]$ when $\deg \,p=1$. 
\begin{lemma} \label{lem-sub-linear}
Let $p \in \mathbb C[z_1, z_2],$ and let $\mathscr H_\kappa$ be a $\mathbb T^2$-invariant Hilbert module. Then the following statements are true$:$ 
\begin{enumerate}
\item[$(\mathrm{i})$] if $p(z)= z_j,$ then $\mathscr T_{z_j}=0,$ $\mathscr M_{z_i}(\mathscr H_\kappa/[p]) \subseteq \mathscr H_\kappa/[p]$ and $\mathscr T_{z_i}=\mathscr M_{z_i}|_{\mathscr H_\kappa/[p]}$ for $1 \Le i \neq j \Le 2,$  
\item[$(\mathrm{ii})$] if $p(z)=z_1-az_2$ for some $a \in \mathbb C \backslash \{0\},$ then 
$\mathscr T_{z_1} = a \mathscr T_{z_2}$ and $\mathscr T_{z_2}$ is the unilateral weighted shift with respect to the orthonormal basis $\{\frac{q_n}{\|q_{n}\|}\}_{n \Ge 0}$ given by $\mathscr T_{z_2}(\frac{q_n}{\|q_{n}\|})=w_n \frac{q_{n+1}}{\|q_{n+1}\|},$ where
\beq 
\label{qn-expression}
q_n(z_1, z_2) &:=& \sum_{k=0}^{n}\overline{a}^{k} \frac{z^k_1 z^{n-k}_2}{\|z^k_1 z^{n-k}_2\|^2}, \quad n \Ge 0, \\
\label{weights}
w_n &:=& \frac{\|q_{n}\|}{\|q_{n+1}\|}, \quad n \Ge 0,
\eeq
\item[$(\mathrm{iii})$] if $p(z)=z_1-az_2$ for some $a \in \mathbb C \backslash \{0\},$ then the commuting pair $\mathscr T_z$ on $\mathscr H_\kappa/[p]$ is subnormal if and only if 
\beq \label{stieljes-m-s}
\left\{\Big(\displaystyle \sum_{k=0}^{n} \frac{{|a|}^{2k}}{\|z^k_1 z^{n-k}_2\|^2}\Big)^{-1}\right\}_{n \Ge 0}~\mbox{is a Stieltjes moment sequence,}
\eeq
\item[$(\mathrm{iv})$] if $p(z_1, z_2)=z_1-z_2,$ and $\mathscr H_{\kappa_j},$ $j=1, 2$ are $\mathbb T$-invariant Hilbert modules on the unit disc $\mathbb D,$ then the quotient module $\widehat{\mathscr H_{\kappa_1}\otimes\mathscr H_{\kappa_2}}/[p]$ is subnormal if and only if the module tensor product
 $\mathscr H_{\kappa_1} \otimes_{\mathbb C[z]} \mathscr H_{\kappa_2}$ is subnormal.
\end{enumerate}
\end{lemma}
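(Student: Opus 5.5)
Parts (i)–(iv) will be handled in order, each reducing to a computation in the orthogonal bases of Lemma~\ref{coro-generators} and \eqref{onbasis}.

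For (i), take $j=1$ (the case $j=2$ is symmetric). By Lemma~\ref{poly-calculus}(i), $p(\mathscr T_z)=\mathscr T_{z_1}=0$. Lemma~\ref{coro-generators} with $r=1$, $s=0$ gives $\{z_2^n\}_{n\Ge 0}$ as an orthogonal basis of $\mathscr H_\kappa/[z_1]$. Indeed $q^{(0)}_n=z_2^n$, and $q^{(1)}_n$ is a multiple of $z_1^{1}z_2^{n-1}$ only when $a=0$. The cleanest way to check this is to observe directly that the quotient is $\bigvee\{z_2^n\}$, since $[z_1]=\bigvee\{z^\alpha:\alpha_1\Ge1\}$ by orthogonality of monomials. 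Since $\mathscr M_{z_2}z_2^n=z_2^{n+1}$, the quotient is $\mathscr M_{z_2}$-invariant, and therefore $\mathscr T_{z_2}=P^\perp\mathscr M_{z_2}|=\mathscr M_{z_2}|$.

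For (ii), Lemma~\ref{poly-calculus}(i) gives $\mathscr T_{z_1}-a\mathscr T_{z_2}=0$. Lemma~\ref{coro-generators} with $r=s=0$ gives the orthogonal basis $\{q_n\}_{n\Ge0}$ with $q_n$ as in \eqref{qn-expression}. To compute $\mathscr T_{z_2}q_n$, note that it is homogeneous of degree $n+1$ after projection. By Lemma~\ref{poly-calculus}(ii), $\mathscr T_{z_2}q_n=P^\perp(z_2q_n)$. Because $\mathbb T^2$-invariance makes the homogeneous components mutually orthogonal and $[p]$ is graded, $P^\perp$ maps degree-$(n+1)$ polynomials into $\mathbb C q_{n+1}$. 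Hence
\[
\mathscr T_{z_2}q_n=\frac{\inp{z_2q_n}{q_{n+1}}}{\|q_{n+1}\|^2}\,q_{n+1}.
\]
The inner product is computed using \eqref{onbasis}:
\[
\inp{z_2q_n}{q_{n+1}}=\sum_{k=0}^n \frac{|a|^{2k}}{\|z_1^kz_2^{n-k}\|^2}=\|q_n\|^2 .
\]
It follows that $\mathscr T_{z_2}(q_n/\|q_n\|)=\big(\|q_n\|/\|q_{n+1}\|\big)\,q_{n+1}/\|q_{n+1}\|$, which is the claimed weighted shift with weights \eqref{weights}.

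For (iii), since $\mathscr T_{z_1}=a\mathscr T_{z_2}$, the pair $\mathscr T_z$ is subnormal if and only if the single operator $\mathscr T_{z_2}$ is subnormal: a normal extension $N$ of $\mathscr T_{z_2}$ yields the normal extension $(aN,N)$ of the pair. By Berger's theorem, the weighted shift is subnormal if and only if $\{(w_0\cdots w_{n-1})^2\}_{n\Ge0}=\{\|q_0\|^2/\|q_n\|^2\}$ is a Stieltjes moment sequence. Since $\|q_n\|^2=\sum_k |a|^{2k}/\|z_1^kz_2^{n-k}\|^2$ and scaling by the positive constant $\|q_0\|^2$ is harmless, this is exactly \eqref{stieljes-m-s}.

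For (iv), note that $\mathscr H_{\kappa_1\odot\kappa_2}$ is $\mathbb T$-invariant. Its monomial norms satisfy $\|z^n\|^{-2}=\sum_{k=0}^n a^{(1)}_k a^{(2)}_{n-k}$, where $\kappa_j(z,w)=\sum a^{(j)}_k z^k\overline w^k$. So $\mathscr M_z$ there is the weighted shift whose moment sequence is $\{\|z^n\|^2\}=\big\{\big(\sum_k a^{(1)}_ka^{(2)}_{n-k}\big)^{-1}\big\}$. In $\mathscr H_{\kappa_1\otimes\kappa_2}$ we have $\|z_1^kz_2^{n-k}\|^{-2}=a^{(1)}_ka^{(2)}_{n-k}$, so with $a=1$ the sequence in \eqref{stieljes-m-s} is the same sequence. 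Both subnormality statements therefore reduce, via Berger's theorem and (iii), to the same Stieltjes condition.

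The main obstacle is justifying in (ii) that $P^\perp$ preserves degree. This holds because $[p]$ is the closed span of homogeneous pieces and the homogeneous subspaces are mutually orthogonal.
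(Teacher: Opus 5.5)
Your proof is correct and is essentially the paper's argument. Parts (i)--(iii) use the same basis from Lemma~\ref{coro-generators}, the same computation $\inp{z_2q_n}{q_{n+1}}=\|q_n\|^2$, Berger's theorem and the extension $(aN,N)$. In (iv) you check directly, via the Cauchy product of the kernel coefficients and Berger's theorem, the equivalence that the paper instead cites from \cite{AC2017}.

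The only slip is in (i): the relevant case of Lemma~\ref{coro-generators} is $r=s=0$, $a=0$, whereas your choice $r=1$, $s=0$ gives $p=z_1(z_1-az_2)$. Your direct observation $[z_1]=\bigvee\{z^\alpha:\alpha_1\Ge 1\}$ already settles that step, so the miscitation does no harm.
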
 
\begin{proof}
(i) If $p(z_1, z_2)= z_1,$ then by Lemma~\ref{coro-generators} (applied to $r=s=0$ and $a=0$), $\{z^n_2\}_{n \Ge 0}$ is an orthogonal basis for $\mathscr H_\kappa/[p],$ and hence $\mathscr T_{z_1}=0,$ $\mathscr H_\kappa/[p]$ is invariant under $\mathscr M_{z_2}$ and $\mathscr T_{z_2}=\mathscr M_{z_2}|_{\mathscr H_\kappa/[p]}.$ The proof is similar in case $p(z_1, z_2)= z_2.$ 

(ii) Suppose that $p(z_1, z_2)=z_1-az_2$ for some $a \in \mathbb C\backslash \{0\}.$ By Lemma~\ref{poly-calculus}(i), $\mathscr T_{z_1}= a\mathscr T_{z_2}$.
Also, by Lemma~\ref{coro-generators} (applied to $r=s=0$), $\{\frac{q_n}{\|q_{n}\|}\}_{n \Ge 0}$ is an orthonormal basis for $\mathscr H_\kappa/[p]$ (see \eqref{qn-expression}). 
By \eqref{onbasis}, 
$\{z^{\alpha}\}_{\alpha \Ge 0}$ is orthogonal in $\mathscr H_\kappa,$ and hence 
$\inp{z_2q_n}{q_m}=0$ for every $m \neq n+1$ and $\inp{z_2 q_n}{q_{n+1}}=\|q_{n}\|^2.$ It follows that 
\beqn
    \mathscr T_{z_2}(q_n)=  P^\perp\left(z_2 q_n\right)
 = \inp{z_2 q_n}{q_{n+1}} \frac{q_{n+1}}{\|q_{n+1}\|^2}
 =  \frac{\|q_{n}\|^2}{\|q_{n+1}\|^2}\, q_{n+1}, \quad n \Ge 0.
\eeqn
Thus $\mathscr T_{z_2}$ is a unilateral weighted shift with weights $\{w_n\}_{n \Ge 0}$ given by \eqref{weights}.

(iii) Note that
\beqn
\prod_{j=0}^{n-1}w^2_j = \frac{\|q_0\|^2}{\|q_n\|^2}=\frac{1}{\|1\|^2}\frac{1}{\displaystyle \sum_{k=0}^{n} \frac{{|a|}^{2k}}{\|z^k_1 z^{n-k}_2\|^2}}, \quad n \Ge 0.
\eeqn
By (ii) and \cite[Theorem~4]{SS1989}, $\mathscr T_{z_2}$ is subnormal if and only if \eqref{stieljes-m-s} holds.
If $\mathscr T_z$ is subnormal, then so $\mathscr T_{z_2}$, and hence \eqref{stieljes-m-s} holds. Conversely, if \eqref{stieljes-m-s} holds and $N$ is a normal extension of $\mathscr T_{z_2}$, then 
since $\mathscr T_{z_1}= a\mathscr T_{z_2}$ (see (ii)), $\mathscr T_z$ is subnormal with normal extension $(aN, N).$

(iv) Note that $\kappa_1 \otimes \kappa_2$ is given by
\beqn
\kappa_1 \otimes \kappa_2((z_1, z_2), (w_1, w_2)) &=&
\sum_{m=0}^{\infty}\frac{z^m_1\overline{w}^m_1}{\|z^m\|^2_{\mathscr H_{\kappa_1}}}\sum_{n=0}^{\infty}\frac{z^n_2\overline{w}^n_2}{\|z^n\|^2_{\mathscr H_{\kappa_2}}} \\
&=& \sum_{\alpha \in \mathbb Z^2_+}\frac{z^{\alpha_1}_1 z^{\alpha_2}_2 \,\overline{w}^{\alpha_1}_1\overline{w}^{\alpha_2}_2}{\|z^{\alpha_1}\|^2_{\mathscr H_{\kappa_1}}\|z^{\alpha_2}\|^2_{\mathscr H_{\kappa_2}}}, \quad (z_1, z_2), (w_1, w_2) \in \mathbb D^2.
\eeqn 
By \eqref{onbasis}, $\Big\{\frac{z^{\alpha_1}_1 z^{\alpha_2}_2}{\|z^{\alpha_1}\|_{\mathscr H_{\kappa_1}}\|z^{\alpha_2}\|_{\mathscr H_{\kappa_2}}}\Big\}_{\alpha \in \mathbb Z^2_+}$ is an orthonormal basis for $\widehat{\mathscr H_{\kappa_1} \otimes \mathscr H_{\kappa_2}}.$ Hence,   
\beqn
\|z^{\alpha_1}_1 z^{\alpha_2}_2\|_{\widehat{\mathscr H_{\kappa_1}\otimes\mathscr H_{\kappa_2}}}= \|z^{\alpha_1}\|_{\mathscr H_{\kappa_1}}\|z^{\alpha_2}\|_{\mathscr H_{\kappa_2}}, \quad \alpha=(\alpha_1, \alpha_2) \in \mathbb Z^2_+.
\eeqn
This, combined with the part (iii) (for $a=1$), implies that $\mathscr H_\kappa/[p]$ is subnormal if and only if 
\beqn
\left\{\Big(\displaystyle \sum_{k=0}^{n} \frac{1}{\|z^{k}\|^2_{\mathscr H_{\kappa_1}}}\frac{1}{\|z^{n-k}\|^2_{\mathscr H_{\kappa_2}}}\Big)^{-1}\right\}_{n \Ge 0}~\mbox{is a Stieltjes moment sequence.}
\eeqn
As noted in the discussion prior to \cite[Eq~(1.2)]{AC2017}, this holds if and only if the module tensor product $\mathscr H_{\kappa_1} \otimes_{\mathbb C[z]} \mathscr H_{\kappa_2}$ is a subnormal Hilbert module, which yields (iv). 
\end{proof}

Recall that $H^2(\mathbb D^2)$ is isomorphic to $\widehat{\mathscr H_{\kappa} \otimes \mathscr H_{\kappa}}$, where $\kappa$ denotes the Cauchy kernel of $H^2(\mathbb D)$. Thus, the above result relates the subnormality of $H^2(\mathbb D^2)/[z_1-z_2]$ to that of $\mathscr H_{\kappa^2},$ that is, the {\it Bergman module} on $\mathbb D.$ This has been noted in \cite[Section~5]{DM1993} (see also \cite[Corollary~5.17]{PR2016}).


For a commuting $d$-tuple $T=(T_1, \ldots, T_d)$ on $\mathcal H$, let $$Q_T(X)=\sum_{j=1}^dT^*_jXT_j, \quad X \in \mathcal B(\mathcal H).$$ 
We set
$Q^0_T(I)=I,$ and define inductively
$Q^n_T(I)=Q_T\big(Q^{n-1}_T(I)\big)$ for $n \Ge 1$. It is easy to see that
\beq \label{identity-sp-iso}
Q^n_T(I)=\sum_{\underset{|\alpha|=n}{\alpha \in \mathbb Z^d_+}}\frac{n!}{\alpha!}T^{*\alpha}T^{\alpha}, \quad n \in \mathbb Z_+.
\eeq 
Following \cite{GR2006}, we say that $T$ is an {\it $m$-isometry}, where $m \Ge 1,$ if 
$$\sum_{j=0}^m(-1)^j \binom{m}{j}Q^j_T(I) = 0.$$ For simplicity, we refer to a $1$-isometry simply as an {\it isometry}. We say that $T$ is a {\it contraction} (resp., a {\it toral isometry}) if $T^*_jT_j \Le I$ (resp., $T^*_{j}T_{j} = I$) for $j=1, \ldots, d.$

The following Hilbert modules are central to this paper:
\begin{example} 
\label{ex-Hardy}
The {\it Hardy module $H^2(\mathbb D^d)$} is the Hilbert space of analytic functions $f(z)=\sum_{\alpha \Ge 0}\hat{f}_{\alpha}z^\alpha$ on $\mathbb{D}^d$ with square-summable Fourier coefficients $\{\hat{f}_{\alpha}\}_{\alpha \Ge 0}.$ 
The reproducing kernel of $H^2(\mathbb D^d)$, called the {\it Cauchy kernel}, is given by
\beqn
\kappa(z, w)=\frac{1}{\prod_{j=1}^d(1-z_j \overline{w}_j)}, \quad z=(z_1, \ldots, z_d), \, w=(w_1, \ldots, w_d) \in \mathbb D^d.
\eeqn
Note that $H^2(\mathbb D^d)$ is a $\mathbb T^d$-invariant Hilbert module satisfying 
\beq
\label{ip-Hardy-pdisc}
\inp{z^\alpha}{z^\beta}_{H^2(\mathbb D^d)}=\begin{cases} 1 & \mbox{if}~\alpha = \beta \in \mathbb Z^d_+, \\
0 & \mbox{if}~\alpha \neq \beta \in \mathbb Z^d_+.
\end{cases}
\eeq 
It follows that the multiplication $d$-tuple $\mathscr M_z$ on $H^2(\mathbb D^d)$ is a toral isometry.

The {\it Hardy module $H^2(\mathbb B^d)$} is the Hilbert space of analytic functions $f(z)=\sum_{\alpha \Ge 0}\hat{f}_{\alpha}z^\alpha$ on $\mathbb{B}^d$ for which 
\beqn
\sum_{\alpha \Ge 0}|\hat{f}_\alpha|^2 \frac{\alpha!(d-1)!}{(|\alpha|+d-1)!} < \infty.
\eeqn 
The reproducing kernel of $H^2(\mathbb B^d),$ called the {\it Szeg$\ddot{o}$ kernel}, is given by
$\kappa(z, w)=\frac{1}{(1-\inp{z}{w})^d},$ $z, w \in \mathbb B^d.$
Note that $H^2(\mathbb B^d)$ is a $\mathcal U_d$-invariant Hilbert module satisfying
\beq
\label{ip-Hardy-ball}
\inp{z^\alpha}{z^\beta}_{H^2(\mathbb B^d)}=
\begin{cases} \frac{\alpha!(d-1)!}{(|\alpha|+d-1)!} & \mbox{if}~\alpha = \beta \in \mathbb Z^d_+, \\
0 & \mbox{if}~\alpha \neq \beta \in \mathbb Z^d_+.
\end{cases}
\eeq 
It follows that the multiplication $d$-tuple $\mathscr M_z$ on $H^2(\mathbb B^d)$ is an isometry.

The {\it Drury–Arveson module $H^2_d$} is the Hilbert space of analytic functions $f(z)=\sum_{\alpha \Ge 0}\hat{f}_{\alpha}z^\alpha$ on $\mathbb{B}^d$ for which 
\beqn
\sum_{\alpha \Ge 0}|\hat{f}_\alpha|^2 \frac{\alpha!}{|\alpha|!} < \infty.
\eeqn 
The reproducing kernel of $H^2_d,$ called the {\it Drury–Arveson kernel}, is given by
$\kappa(z, w)=\frac{1}{1-\inp{z}{w}},$ $z, w \in \mathbb B^d.$
Note that $H^2_d$ is a $\mathcal U_d$-invariant Hilbert module satisfying
\beq
\label{ip-Arveson}
\inp{z^\alpha}{z^\beta}_{H^2_d}=
\begin{cases} \frac{\alpha!}{|\alpha|!} & \mbox{if}~\alpha = \beta \in \mathbb Z^d_+, \\
0 & \mbox{if}~\alpha \neq \beta \in \mathbb Z^d_+.
\end{cases}
\eeq 
It follows from \cite[Lemma~4.3]{GR2006} that the multiplication $d$-tuple $\mathscr M_z$ on $H^2_d$ is a $d$-isometry. 
\hfill $\diamondsuit$
\end{example}


\begin{lemma} \label{Taylor-quotient}
Let $T$ be a commuting $d$-tuple on $\mathcal H$ and let $\mathcal M$ be a closed subspace of $\mathcal H$ that is invariant under $T.$ If $\sigma(T) \subseteq \overline{\mathbb B^d},$ then $\sigma(T|_{\mathcal M}) \subseteq \overline{\mathbb B^d}$.
\end{lemma}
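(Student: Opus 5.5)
The plan is to reduce the statement to a one-variable spectral-radius estimate, using the spectral mapping theorem for the Taylor spectrum applied to linear polynomials.

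First, for a unit vector $u \in \mathbb C^d$, consider the linear polynomial $\ell_u(z) = \inp{z}{u} = \sum_{j=1}^d \overline{u}_j z_j$ and the operator $A_u := \ell_u(T) = \sum_{j=1}^d \overline{u}_j T_j \in \mathcal B(\mathcal H)$. By the polynomial spectral mapping theorem for the Taylor spectrum (see \cite{C1988, T1970}), $\sigma(A_u) = \ell_u(\sigma(T)) = \{\inp{z}{u} : z \in \sigma(T)\}$. Since $\sigma(T) \subseteq \overline{\mathbb B^d}$, the Cauchy--Schwarz inequality gives $|\inp{z}{u}| \Le 1$ for every $z \in \sigma(T)$. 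Hence the spectral radius satisfies $r(A_u) \Le 1$.

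Next, I pass to $\mathcal M$. Because $\mathcal M$ is invariant under each $T_j$, it is invariant under $A_u$, and $A_u|_{\mathcal M} = \ell_u(T|_{\mathcal M})$. Moreover $(A_u|_{\mathcal M})^n = A_u^n|_{\mathcal M}$, so $\|(A_u|_{\mathcal M})^n\| \Le \|A_u^n\|$. Gelfand's spectral radius formula then yields
\[
r(A_u|_{\mathcal M}) \Le r(A_u) \Le 1 .
\]
Applying the spectral mapping theorem again, this time to the commuting $d$-tuple $T|_{\mathcal M}$ on $\mathcal M$ (whose Taylor spectrum is a nonempty compact set), we get $\{\inp{z}{u} : z \in \sigma(T|_{\mathcal M})\} = \sigma(A_u|_{\mathcal M})$. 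Hence $|\inp{z}{u}| \Le 1$ for all $z \in \sigma(T|_{\mathcal M})$ and all unit vectors $u$.

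Finally, take $z \in \sigma(T|_{\mathcal M})$ with $z \neq 0$ and choose $u = z/\|z\|$. This gives $\|z\| = |\inp{z}{u}| \Le 1$, so $\sigma(T|_{\mathcal M}) \subseteq \overline{\mathbb B^d}$.

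The only delicate point is justifying the spectral mapping theorem for the Taylor spectrum with polynomial (here linear) maps. This is standard, so I expect no real obstacle. The essential observation is that the closed ball is an intersection of the half-disc conditions $|\inp{z}{u}| \Le 1$, and each such condition is controlled by a single-operator spectral radius, which can only decrease under restriction to an invariant subspace.
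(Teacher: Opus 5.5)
Your proof is correct, but it takes a different route from the paper.

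The paper works directly with the geometric spectral radius $r(S)=\sup\{\|z\|_2 : z\in\sigma(S)\}$ of a commuting tuple. It invokes the multivariable formula $r(S)=\lim_{k}\|Q_S^k(I)\|^{1/(2k)}$ of \cite{M-S} and \cite{C-Z}. For $x\in\mathcal M$ one has $\langle Q^k_{T|_{\mathcal M}}(I)x,x\rangle=\sum_{|\alpha|=k}\frac{k!}{\alpha!}\|T^{\alpha}x\|^2=\langle Q^k_{T}(I)x,x\rangle$. Hence $\|Q^k_{T|_{\mathcal M}}(I)\|\le\|Q^k_T(I)\|$, and so $r(T|_{\mathcal M})\le r(T)$.

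You instead write the closed ball as the intersection of the slabs $\{z : |\langle z,u\rangle|\le 1\}$ and control each slab by a single operator. This needs only two tools: the polynomial spectral mapping theorem for the Taylor spectrum (which the paper itself uses in Corollary\,A via \cite{C1988}) and the one-variable Gelfand formula. In effect you prove $r(S)=\sup_{\|u\|=1}r(\ell_u(S))$, which gives the same monotonicity $r(T|_{\mathcal M})\le r(T)$ without the M\"uller--Soltysiak formula.

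Your method also generalizes in a way the paper's does not. Replace $\ell_u$ by an arbitrary polynomial $p$: since $p(T|_{\mathcal M})=p(T)|_{\mathcal M}$, you get $r(p(T|_{\mathcal M}))\le r(p(T))$. The same argument then shows that $\sigma(T|_{\mathcal M})$ lies in the polynomially convex hull of $\sigma(T)$. The paper's $Q_T$-computation, by contrast, is tied to the Euclidean ball.

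A minor point: the nonemptiness of $\sigma(T|_{\mathcal M})$ plays no role in your argument. It also fails when $\mathcal M=\{0\}$, where the statement is trivial, so you can drop that remark.
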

\begin{proof} Recall from \cite[Theorem 1]{M-S} and \cite[Theorem 1]{C-Z} that the
geometric spectral
radius $r(S):=\sup \{\|z\|_2 : z \in \sigma(T)\}$ of a commuting $d$-tuple $S$ is given by 
\beq \label{spec-rad-f}
r(S)=\lim_{k \rar \infty}
\big\|Q_S^k(I)\big\|^{\frac{1}{2k}}.
\eeq
For any integer $k \Ge 1,$ note that 
\beqn
\big\|Q^k_{T|_{\mathcal M}}(I)\big\|=\sup_{x \in \mathcal M, \|x\|=1} \inp{Q^k_{T|_{\mathcal M}}(I)x}{x} \overset{\eqref{identity-sp-iso}}\Le \sup_{x \in \mathcal H, \|x\|=1} \inp{Q^k_{T}(I)x}{x}=\big\|Q^k_T(I)\big\|. 
\eeqn
It follows from \eqref{spec-rad-f} that $r(T|_{\mathcal M}) \Le r(T).$ This yields the required implication. 
\end{proof}

Finally, we include a measure-theoretic fact for completeness (cf. \cite[p.~308]{R1988}).
\begin{lemma} \label{lem-support}
Let $\Omega$ be a bounded domain in $\mathbb C^d$ and let $\mu$ be a finite positive Borel measure on $\overline{\Omega}.$ 
Assume that $f : \overline{\Omega} \rar [0, \infty)$ is a continuous function such that $\int_{\overline{\Omega}}f(z) \, \D\mu(z)=0.$ Then, 
$\mu(\overline{\Omega} \setminus Z(f))=0$ and 
$\supp\,\mu \subseteq Z(f).$
\end{lemma}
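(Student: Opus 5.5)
We argue by contradiction, which is the standard route for this measure-theoretic fact. Suppose $\mu(\overline{\Omega} \setminus Z(f)) > 0$. Since $f$ is continuous on $\overline{\Omega}$, the set $\overline{\Omega} \setminus Z(f) = \{z \in \overline{\Omega} : f(z) > 0\}$ is relatively open, and it is the increasing union of the Borel sets
\[
E_n := \{z \in \overline{\Omega} : f(z) \Ge 1/n\}, \quad n \Ge 1.
\]
By continuity from below of the finite measure $\mu$, some $E_n$ satisfies $\mu(E_n) > 0$. Because $f \Ge 0$, we then get
\[
0 = \int_{\overline{\Omega}} f \, \D\mu \Ge \int_{E_n} f \, \D\mu \Ge \frac{1}{n}\,\mu(E_n) > 0,
\]
which is a contradiction. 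Hence $\mu(\overline{\Omega} \setminus Z(f)) = 0$.

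For the support statement, recall that $\supp\,\mu$ is the complement in $\overline{\Omega}$ of the largest relatively open set of $\mu$-measure zero. Equivalently, a point lies outside the support exactly when it has a relatively open neighbourhood of measure zero. Continuity of $f$ makes $U := \overline{\Omega} \setminus Z(f)$ relatively open in $\overline{\Omega}$, and the first part gives $\mu(U) = 0$. So every point of $U$ has a $\mu$-null neighbourhood, namely $U$ itself, and therefore lies outside $\supp\,\mu$. Thus $\supp\,\mu \subseteq \overline{\Omega} \setminus U = Z(f)$, where $Z(f)$ is taken as the zero set of $f$ on $\overline{\Omega}$.

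There is no real obstacle in this argument. The only point needing care is the topology: continuity of $f$ on the compact set $\overline{\Omega}$ is exactly what makes $U$ open, and therefore what lets us apply the definition of the support. Boundedness of $\Omega$ is used only to ensure that $\overline{\Omega}$ is compact, so that $\mu$ is a finite Borel measure there with a well-defined support. Second countability of $\mathbb C^d$ guarantees that the union of all $\mu$-null open sets is itself $\mu$-null, so the support is well defined.
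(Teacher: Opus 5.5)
Your proof is correct, and it is the standard argument the paper defers to: the paper gives no proof of its own, only a pointer to Royden. The bound $0=\int f\,\D\mu \geq \frac{1}{n}\,\mu(\{f \geq 1/n\})$ forces $\mu(\{f>0\})=0$, and the relative openness of $\{f>0\}$ then excludes it from $\supp\,\mu$.

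One small quibble with your closing paragraph: neither compactness of $\overline{\Omega}$ nor boundedness of $\Omega$ is actually used. Finiteness of $\mu$ is a hypothesis rather than a consequence of compactness, and openness of $\{f>0\}$ needs only the continuity of $f$.
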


\section{Proofs of the main results} \label{S4}

We begin with a proof of Proposition~\ref{rotation-q-sub}.
\begin{proof}[Proof of Proposition~\ref{rotation-q-sub}]
Assume that $p=r q^2$ for some $r, q \in \mathbb C[z_1, \ldots, z_d]$ and $q$ is nonconstant. Since $p$ is homogeneous, the factors $q$ and $r$ of $p$ are also homogeneous (see the proof of Proposition\,B). Since $q$ is nonconstant, $\deg\,r < \deg\,p.$ Hence, by Lemma~\ref{poly-calculus}(iv), $r \in \mathscr H_\kappa/[p].$ Suppose now that $\mathscr T_z$ is subnormal with minimal normal extension $N$. Then there exists a positive measure $\mu_{r}$ on the Taylor spectrum $\sigma(N)$ of $N$ such that 
\beqn
\inp{\mathscr T_{z}^{*\alpha} \mathscr T_{z}^{\beta}(r)}{r} =\int_{\sigma(N)} \overline{w}^\alpha w^\beta \, \D\mu_{r}(w), \quad 
\alpha, \beta \in \mathbb Z^d_+
\eeqn
(see \cite[Proof of Theorem~2.1]{CS1985}).
Since $q^2(\mathscr T_z)r=0$ (see Lemma~\ref{poly-calculus}(iv)),  
\beqn
    \int_{\sigma(N)} |q^2(w)|^2 \, \D\mu_{r}(w) = \| q^2(\mathscr T_z)r\|^2 =0.
\eeqn
It follows from Lemma~\ref{lem-support} that $\supp\,\mu_r \subseteq Z(q),$ and hence 
\beqn
\| q(\mathscr T_z)r\|^2=\int_{Z(q)} |q(w)|^2 \, \D\mu_r(w) = 0.
\eeqn
Once again, by Lemma~\ref{poly-calculus}(iv), $q(\mathscr T_z)r= qr,$ which is nonzero. This contradiction shows that $q$ must be constant, or equivalently, $p$ is square-free. 
\end{proof}

The subnormality of the quotient module $H^2(\mathbb D^d)/[p]$ implies that $p$ is linear. 
\begin{proposition} \label{polydisc-linear}
Let $p \in \mathbb C[z_1, \ldots, z_d]$ be a nonconstant homogeneous polynomial. 
If $H^2(\mathbb D^d)/[p]$ is subnormal, then 
$\deg\,p=1.$
\end{proposition}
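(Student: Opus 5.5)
The plan is to argue by contradiction: assume $m:=\deg\,p\Ge 2$ and test subnormality on the vector $1$, using the fact that the quotient tuple inherits contractivity from the toral isometry $\mathscr M_z$ on $H^2(\mathbb D^d)$ (see Example~\ref{ex-Hardy}).

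First, since $\deg\,1=0\Le m-2$, Lemma~\ref{poly-calculus}(v) applies. Together with Lemma~\ref{poly-calculus}(iv), it gives $1\in H^2(\mathbb D^d)/[p]$ and $\mathscr T_{z_j}1=z_j$ for $j=1,\ldots,d$. By \eqref{ip-Hardy-pdisc}, $\|\mathscr T_{z_j}1\|=\|z_j\|=1=\|1\|$. Moreover $\mathscr T_{z_j}=P^\perp\mathscr M_{z_j}|_{H^2(\mathbb D^d)/[p]}$ is a contraction.

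Next, suppose $\mathscr T_z$ is subnormal with minimal normal extension $N$. Then $\|N_j\|=\|\mathscr T_{z_j}\|\Le 1$, which I take from the standard fact that a minimal normal extension preserves norms and spectral radii. Hence $\sigma(N)\subseteq\overline{\mathbb D}^d$. As in the proof of Proposition~\ref{rotation-q-sub}, there is a positive measure $\mu$ on $\sigma(N)$ with
\[
\inp{\mathscr T_z^{*\alpha}\mathscr T_z^{\beta}1}{1}=\int \overline{w}^\alpha w^\beta\,\D\mu(w), \qquad \alpha,\beta\in\mathbb Z^d_+.
\]
In particular $\mu(\sigma(N))=\|1\|^2=1$, and $\int|w_j|^2\,\D\mu=\|\mathscr T_{z_j}1\|^2=1$ for each $j$. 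Then $\int(1-|w_j|^2)\,\D\mu=0$ with a nonnegative continuous integrand, so Lemma~\ref{lem-support} gives $\supp\,\mu\subseteq\mathbb T^d$. Consequently $\|\mathscr T_z^\alpha 1\|^2=\int|w^\alpha|^2\,\D\mu=1$ for every $\alpha\in\mathbb Z^d_+$.

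Finally, choose $\alpha$ with $|\alpha|=m$ such that the coefficient $\hat p_\alpha$ of $z^\alpha$ in $p$ is nonzero. By Lemma~\ref{poly-calculus}(ii), $\mathscr T_z^\alpha1=P^\perp(z^\alpha)$. Since $\|P^\perp z^\alpha\|=1=\|z^\alpha\|$, the vector $z^\alpha$ lies in $H^2(\mathbb D^d)/[p]$, so $z^\alpha\perp p$. But $\inp{p}{z^\alpha}=\hat p_\alpha\neq0$ by \eqref{ip-Hardy-pdisc}, which is a contradiction. Hence $\deg\,p\Le1$, and as $p$ is nonconstant, $\deg\,p=1$.

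The step needing most care is the localization of $\mu$ on $\mathbb T^d$. That step requires $|w_j|\Le1$ on $\supp\,\mu$, i.e. norm preservation for the minimal normal extension. If this is not directly available, I would replace it by the spectral inclusion $\sigma(N)\subseteq\sigma(\mathscr T_z)$ together with the contractivity of $\mathscr T_z$, which puts $\sigma(\mathscr T_z)$ inside $\overline{\mathbb D}^d$.
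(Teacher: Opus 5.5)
Your proof is correct and follows essentially the same route as the paper's. Like the paper, you use contractivity of the minimal normal extension (the paper cites Conway's Corollary~II.2.17 here; your fallback via Putinar's spectral inclusion also works), localize the measure of the vector $1$ on $\mathbb T^d$ via Lemma~\ref{lem-support}, and then use $\|P^\perp z^\alpha\|=\|z^\alpha\|$ to get $z^\alpha\perp[p]$. Choosing a single monomial with $\hat p_\alpha\neq 0$ is only a slightly more economical version of the paper's conclusion that $P(z^\alpha)=0$ for all $\alpha$, hence $P(p)=0$.
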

\begin{proof} In view of Remark~\ref{rmk-one-v}, we may assume that $d \Ge 2.$ 
Assume that the quotient module $H^2(\mathbb D^d)/[p]$ is subnormal and $\deg\,p \Ge 2.$ 
By Lemma~\ref{poly-calculus}(iv), $1 \in H^2(\mathbb D^d)/[p].$
Since $\mathscr M_z$ is a contraction, it follows from \eqref{adjoint} that $\mathscr T_z$ is contractive. By \cite[Corollary~II.2.17]{Co1991}, the minimal normal extension $N$ of $\mathscr T_z$ is a contraction. 
Thus the Taylor spectrum of $N$ is contained in $\overline{\mathbb D}^d.$ 
Hence, there exists a positive measure $\mu$ on $\overline{\mathbb D}^d$ such that 
\beq \label{app-sp-thm-new}
\|\mathscr T^{\alpha}_z(1)\|^2 =\int_{\overline{\mathbb D}^d} |w^{\alpha}|^{2} \D\mu(w), \quad \alpha \in  \mathbb Z^d_+.
\eeq
Since $\deg\,p \Ge 2,$ this, combined with Lemma~\ref{poly-calculus}(v), implies that 
\beqn
    \int_{\overline{\mathbb D}^d} (1-|w_j|^2) \, \D\mu(w) &=& \|1\|^2 - \|\mathscr T_{z_j}(1)\|^2 \\
&=& 1 - \|z_j\|^2 \\
&\overset{\eqref{ip-Hardy-pdisc}}=& 0, \quad j=1, \ldots, d.
\eeqn 
Since $1-|w_j|^2$ is nonnegative on $\overline{\mathbb D}^d$ and $\mu$ is a positive measure, by Lemma~\ref{lem-support}, $\supp\,\mu \subseteq Z(1-|z_j|^2) \cap \overline{\mathbb D}^d$ for every $j=1, \ldots, d.$ It follows that $\supp \,\mu \subseteq \mathbb T^d.$
This, together with \eqref{app-sp-thm-new}, yields
\begin{align*}
\|\mathscr T^{\alpha}_z(1)\|^2 =\int_{\mathbb T^d} 1\, \D\mu(w)
    =1, \quad \alpha \in  \mathbb Z^d_+.
\end{align*}
However, by Lemma~\ref{poly-calculus}(ii), $\mathscr T^{\alpha}_z(1)=P^{\perp}(z^\alpha),$ and hence $\|P^{\perp}(z^\alpha)\|=1$ for every $\alpha \in \mathbb Z^d_+.$ It follows that
\beqn
   \|P(z^\alpha)\|^2 = \|z^\alpha\|^2 - \|P^{\perp}(z^\alpha)\|^2 \overset{\eqref{ip-Hardy-pdisc}}=0, \quad \alpha \in  \mathbb Z^d_+.
\eeqn
This forces $P (z^\alpha)=0$ for every 
$\alpha \in  \mathbb Z^d_+,$ forcing
$P(p)=0$ or $p \in H^2(\mathbb D^d)/[p],$ which is a contradiction.
\end{proof}

\begin{proof}[Proof of Theorem~\ref{Hardy-q-sub}]
(ii) $\Rightarrow$ (i): Since $\mathscr M_z$ is subnormal, by Lemma~\ref{lem-sub-linear}(i), $\mathscr T_z$ is subnormal provided either $p(z_1, z_2)=z_1$ or $p(z_1, z_2)=z_2.$ 
Suppose that $p(z_1, z_2)=z_1-az_2$ for some $a \in \mathbb C \backslash \{0\}.$ 
By Lemma~\ref{lem-sub-linear}(iii) and \eqref{ip-Hardy-pdisc}, it suffices to check that
\beq
\label{Stieljes-Hardy}
\Big\{\Big(\sum_{k=0}^{n} {|a|}^{2k}\Big)^{-1}\Big\}_{n \Ge 0}~\mbox{is a Stieltjes moment sequence.}
\eeq 
If $|a|=1,$ then this follows directly from the identity $\frac{1}{n+1}=\int_{0}^1 t^n\,\D t,$ $n \Ge 0.$
If $0 < |a| < 1,$ then 
\beqn
\frac{1}{\displaystyle \sum_{k=0}^{n} {|a|}^{2k}}= \frac{1-|a|^2}{1-|a|^{2(n+1)}}, \quad n \Ge 0,
\eeqn
and hence \eqref{Stieljes-Hardy} follows from Remark~\ref{rmk-exam-used}.  
If $|a| > 1,$ then 
\beqn
\frac{1}{\displaystyle \sum_{k=0}^{n}|a|^{2k}}=\frac{1}{\displaystyle \sum_{k=0}^{n}|a|^{-2k}}\frac{1}{|a|^{2n}}, \quad n \Ge 0.
\eeqn
Because the product of two Stieltjes moment sequences is a Stieltjes moment sequence (see \cite[Lemma 2.1]{BD2004}), \eqref{Stieljes-Hardy} holds in this case as well.

(i) $\Rightarrow$ (ii): This follows from Proposition~\ref{polydisc-linear}.

The equivalence (ii)$\Leftrightarrow$(iii) follows from Proposition~\ref{lemma-main-thm}.
\end{proof}
\begin{remark} \label{rmk-rep-measures}
Suppose that $p(z_1, z_2)=z_1-az_2$ for some $a \in \mathbb C \backslash \{0\}.$ Since $\mathscr T_{z_2}$ is a unilateral weighted shift with weights $\{w_n\}_{n \Ge 0}$ given by
\eqref{weights} and $\mathscr T_{z_1}= a\mathscr T_{z_2}$, it is easy to see that for $k, l, n \Ge 0,$
  \begin{align*}
 \gamma_{a}(k, l) :=    \Big\|\mathscr T_{z_1}^k \mathscr T_{z_2}^l\big(\frac{q_n}{\|q_n\|}\big)\Big\|^2
    = \begin{cases}
       \frac{n+1}{n+k+l+1}& |a|=1,\\  
      |a|^{2k}\frac{1- |a|^{2(n+1)}}{1- |a|^{2(n+k+l+1)}}& |a|\neq 1,
      \end{cases}
  \end{align*}
where $\{q_n\}_{n \Ge 0}$ is as given in \eqref{qn-expression}.
The representing measures of these Stieltjes moment bi-sequences can be computed as follows:
\begin{enumerate}
    \item[$\bullet$] $|a|=1$:  
Define $\varphi: [0,\,1] \to [0,\,1]^2$ by $\varphi(t)=(t, t)$ for $t \in [0, 1]$. 
Then,
    $$\gamma_{a}(k, l) = \int_{[0, 1]^2} t_1^{2k} t_2^{2l} \,2(n + 1)t^{2n+1}_1 \, \D\varphi_*\ell(t), \quad k, l \Ge 0.$$ 
    \item[$\bullet$] $|a|<1$: 
Note that for any $k, l \Ge 0,$
    \begin{align*}
\gamma_{a}(k, l) = \int_{[0, 1]^2} t_1^{2k} t_2^{2l} \big(1- |a|^{2(n+1)}\big)\sum_{m=0}^{\infty}|a|^{2m(n+1)}\,\D\delta_{(|a|^{m+1},\,|a|^{m})}(t). 
   \end{align*}   
\item[$\bullet$] $|a|>1$: Note that for any $k, l \Ge 0,$
    \begin{align*}
\gamma_{a}(k, l) =
\int_{[0, 1]^2} t_1^{2k} t_2^{2l} \,\frac{|a|^{2(n+1)}-1}{|a|^{2(n+1)}}\sum_{m=0}^{\infty}\frac{1}{|a|^{2m(n+1)}}\,\D\delta_{(|a|^{-m}, |a|^{-(m+1)})}(t).
\end{align*} 
\end{enumerate}
We leave the verification of these identities to the interested reader. \eof
\end{remark}

We need Proposition~\ref{sub-linear-u-invariant} in the proof of Theorem~\ref{Hardy-ball-q-sub}.
\begin{proof}[Proof of Proposition~\ref{sub-linear-u-invariant}]
Since $\mathscr M_z$ is subnormal, by Lemma~\ref{lem-sub-linear}(i), $\mathscr T_z$ is subnormal provided either $p(z_1, z_2)=z_1$ or $p(z_1, z_2)=z_2.$ Suppose now that $p(z_1, z_2)=z_1-az_2$ for some $a \in \mathbb C \backslash \{0\}.$ In view of Lemma~\ref{lem-sub-linear}, it suffices to check that \eqref{stieljes-m-s} holds. 
Since $\mathscr H_\kappa$ is $\mathcal U_2$-invariant, by \cite[Theorem~2.1]{CY2015}, there exists a sequence $\{\gamma_k\}_{k \Ge 0}$ of positive real numbers such that
\beqn
\|z^{\alpha}\| = \gamma_{|\alpha|}\sqrt{\frac{\alpha!}{(|\alpha|+1)!}}, \quad \alpha \in \mathbb Z^2_+.
\eeqn
It follows that
\beqn
\sum_{k=0}^{n} \frac{{|a|}^{2k}}{\|z^k_1 z^{n-k}_2\|^2} = \frac{1}{\gamma^2_{n}} (n+1)(1+|a|^2)^n, \quad n \Ge 0.
\eeqn
However, since $\mathscr H_\kappa$ is subnormal, by \cite[Theorem 5.3]{CY2015} and \cite[Theorem~4]{SS1989}, $\{\gamma^2_k\}_{k \Ge 0}$ is a Stieltjes moment sequence, and hence   
\beqn
\Big(\displaystyle \sum_{k=0}^{n} \frac{{|a|}^{2k}}{\|z^k_1 z^{n-k}_2\|^2}\Big)^{-1}= \gamma^2_{n}\,\frac{1}{n+1}\frac{1}{(1+|a|^2)^n}, \quad n \Ge 0,
\eeqn
being product of finitely many Stieljes moment sequences, is a Stieltjes moment sequence (see \cite[Lemma 2.1]{BD2004}). This completes the proof. 
\end{proof}
\begin{remark} \label{example-Arveson-sub-deg-1}
Assume that $\deg\,p=1.$ We show that $H^2_2/[p]$ is subnormal.
Since $\mathscr M_{z_j},$ $j=1, 2,$ is subnormal (see \cite[Proposition~4]{L1977}), by Lemma~\ref{lem-sub-linear}(i), $\mathscr T_z$ is subnormal provided either $p(z_1, z_2)=z_1$ or $p(z_1, z_2)=z_2.$ Suppose now that $p(z_1, z_2)=z_1-az_2$ for some $a \in \mathbb C \backslash \{0\}.$ In view of Lemma~\ref{lem-sub-linear}, it suffices to check that 
\eqref{stieljes-m-s} holds. To see this, note that 
\beqn
\Big(\sum_{k=0}^{n} \frac{{|a|}^{2k}}{\|z^k_1 z^{n-k}_2\|^2}\Big)^{-1}
\overset{\eqref{ip-Arveson}}=
 \Big(\sum_{k=0}^{n} \binom{n}{k}{|a|}^{2k}\Big)^{-1} = (1+|a|^2)^{-n}, \quad n \Ge 0.
\eeqn
This is a Stieltjes moment sequence with the representing measure $\delta_{(1+|a|^2)^{-1}}$, and hence the quotient module $H^2_2/[p]$ is subnormal. 
\eof
\end{remark}

The proof of Theorem~\ref{Hardy-ball-q-sub} further requires the following facts.
\begin{proposition} \label{spherical-m-iso-new}
Let $m$ be a positive integer, $p$ be a nonconstant homogeneous polynomial in $\mathbb C[z_1, \ldots, z_d],$ and let $\mathscr H_\kappa$ be a $\mathbb T^d$-invariant Hilbert module on $\mathbb B^d$. 
Assume that the quotient module $\mathscr H_\kappa/[p]$ is subnormal. 
Then $\deg\,p \Le m$ whenever one of the following conditions holds$:$
\begin{enumerate}  
\item[$\mathrm{(i)}$] $\mathscr M_z$ is an $m$-isometry, 
\item[$\mathrm{(ii)}$] $\sigma(\mathscr M_z) \subseteq \overline{\mathbb B^d}$ and 
\beq \label{weak-m-iso}
\displaystyle \sum_{n=0}^m (-1)^n\binom{m}{n}\|z^n_j\|^2=0, \quad j=1, \ldots, d.
\eeq 
\end{enumerate}  
\end{proposition}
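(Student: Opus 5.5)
Suppose, for contradiction, that $\mathscr H_\kappa/[p]$ is subnormal and $\deg\,p \Ge m+1$. We follow the scheme of the proof of Proposition~\ref{polydisc-linear}, with the contractivity argument replaced by a spectral containment and the ``isometry'' identity replaced by an $m$-isometry identity. Since $\deg\,p \Ge 1$, Lemma~\ref{poly-calculus}(iv) gives $1 \in \mathscr H_\kappa/[p]$. For every homogeneous $r$ with $\deg\,r \Le m-1 \Le \deg\,p - 2$, Lemma~\ref{poly-calculus}(v) gives $\mathscr T_{z_j} r = z_j r$. Iterating, we get $\mathscr T_z^\alpha(1) = z^\alpha$ for all $|\alpha| \Le m$. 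Hence $\|\mathscr T^\alpha_z(1)\| = \|z^\alpha\|$ for $|\alpha|\Le m$, and by \eqref{identity-sp-iso},
$$\inp{Q^n_{\mathscr T_z}(I)1}{1} = \inp{Q^n_{\mathscr M_z}(I)1}{1}, \quad 0 \Le n \Le m.$$

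Next we need the spectrum of the minimal normal extension $N$ of $\mathscr T_z$ to lie in $\overline{\mathbb B^d}$. The quotient $\mathscr H_\kappa/[p]$ is invariant under $\mathscr M_z^*$, and $\mathscr T_z^* = \mathscr M_z^*|_{\mathscr H_\kappa/[p]}$ by \eqref{adjoint}. So Lemma~\ref{Taylor-quotient}, applied to $\mathscr M_z^*$, gives $\sigma(\mathscr T_z^*) \subseteq \overline{\mathbb B^d}$, provided $\sigma(\mathscr M_z)\subseteq \overline{\mathbb B^d}$. This is assumed in case (ii). In case (i) it holds because an $m$-isometry has spectral radius at most $1$ via \eqref{spec-rad-f}: $\inp{Q^n(I)x}{x}$ is a polynomial in $n$ of degree $< m$, so $\|Q^n(I)\|^{1/2n}\to 1$. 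Conjugating gives $\sigma(\mathscr T_z)\subseteq\overline{\mathbb B^d}$. For a subnormal tuple, the spectrum of the minimal normal extension is contained in $\sigma(\mathscr T_z)$, which is a standard fact and the step I expect to need the most care to cite. It follows that there is a positive measure $\mu$ on $\overline{\mathbb B^d}$ with $\|\mathscr T^\alpha_z(1)\|^2 = \int |w^\alpha|^2 \,\D\mu$ for all $\alpha$.

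The goal is now to force $\mu$ onto the sphere, or onto a set that still yields a contradiction. In case (i), summing over $|\alpha|=n$ with multinomial weights and applying the $m$-isometry identity to the vector $1$ gives
$$\int_{\overline{\mathbb B^d}} (1-\|w\|^2)^m \,\D\mu(w) = \sum_{n=0}^m (-1)^n\binom{m}{n}\inp{Q^n_{\mathscr M_z}(I)1}{1} = 0.$$
Here we use only moments with $|\alpha|\Le m$, which agree with those of $\mathscr M_z$ by the first step. Since $(1-\|w\|^2)^m\Ge 0$ on $\overline{\mathbb B^d}$, Lemma~\ref{lem-support} gives $\supp\,\mu \subseteq \partial\mathbb B^d$. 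Then $\int (1-\|w\|^2)\,\D\mu = 0$, so for every $\alpha$ we get $\|P^\perp(z^\alpha)\|^2 = \sum_j\|P^\perp(z_jz^\alpha)\|^2$. In particular, writing $\tau_n := \sum_{|\alpha|=n}\frac{n!}{\alpha!}\|P^\perp z^\alpha\|^2$, we have $\tau_n = \tau_0 = \|1\|^2$ for all $n$, whereas $\sum_{|\alpha|=n}\frac{n!}{\alpha!}\|z^\alpha\|^2$ is a polynomial in $n$ whose differences of order $m$ vanish.

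To get the contradiction I would compare the degree $n=m+1$ (or $n=\deg\,p$) level. There $P^\perp$ actually kills part of $p$, so $\tau_{\deg p} < \inp{Q^{\deg p}_{\mathscr M_z}(I)1}{1}$, which must be played against the $m$-isometric polynomial growth. I expect that showing these two are inconsistent is the main obstacle. My first attempt is to use $\int(1-\|w\|^2)^m\,\D\mu=0$ shifted by $z^\alpha$ for all $\alpha$ with $|\alpha| = \deg\,p - m$, where that shifted identity also holds for $\mathscr M_z$.

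In case (ii), we apply the same reasoning to the single-variable moments: $\int (1-|w_j|^2)^m\,\D\mu = \sum_n(-1)^n\binom{m}{n}\|z_j^n\|^2 = 0$ by \eqref{weak-m-iso}. So $\supp\,\mu\subseteq\{|w_j|=1\}\cap\overline{\mathbb B^d}$. For $d\Ge 2$ this forces $\mu$ to be supported on points with $|w_j|=1$ and all other coordinates zero. Doing this for two indices $j\neq k$ gives $\mu=0$, contradicting $\|1\|\neq 0$. If $d=1$, the result follows from Remark~\ref{rmk-one-v}.
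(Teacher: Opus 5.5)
Your preliminaries match the paper's proof: $1\in\mathscr H_\kappa/[p]$ and $\mathscr T_z^\alpha(1)=z^\alpha$ for $|\alpha|\Le m$, the spectral containment via Lemma~\ref{Taylor-quotient} and Putinar's inclusion, and the measure $\mu$ on $\overline{\mathbb B^d}$. Your treatment of case (ii) also matches. The gap is in case (i). After obtaining $\supp\,\mu\subseteq\partial\mathbb B^d$ and $\tau_n=\|1\|^2$ for all $n$, you call the clash with the $m$-isometric growth ``the main obstacle'' and stop. So the contradiction is never derived.

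The missing step uses only what you already have. Put $\sigma_n:=\inp{Q^n_{\mathscr M_z}(I)1}{1}=\sum_{|\alpha|=n}\frac{n!}{\alpha!}\|z^\alpha\|^2$. Applying $Q^n_{\mathscr M_z}$ to the $m$-isometry identity gives $\sum_{j=0}^m(-1)^j\binom{m}{j}\sigma_{n+j}=0$ for every $n$. Hence $\sigma_n$ is a polynomial in $n$ of degree at most $m-1$. Your first step gives $\sigma_n=\tau_n=\|1\|^2$ for $0\Le n\Le m$, so $\sigma_n=\|1\|^2=\tau_n$ for every $n$. Since $\sigma_n-\tau_n=\sum_{|\alpha|=n}\frac{n!}{\alpha!}\|Pz^\alpha\|^2$, this forces $Pz^\alpha=0$ for all $\alpha$. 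In particular $p=Pp=0$, which is a contradiction.

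Your tentative ``shifted'' idea would also work. With $\rho_j:=\sigma_j-\tau_j$ and $k=\deg\,p-m\Ge 1$, it yields $\sum_{n=0}^m(-1)^n\binom{m}{n}\rho_{n+k}=0$. By Lemma~\ref{poly-calculus}(iv), $\rho_j=0$ for $j<\deg\,p$, so only $(-1)^m\rho_{\deg p}=0$ survives, and again $Pp=0$. As written, however, neither route is carried out, so case (i) is unproved.

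The paper closes case (i) the same way in substance, phrased as an induction. It uses $\int(1-\|w\|^2)^{m+k}\,\D\mu=0$ for all $k\Ge 1$ (because $\mu$ lives on the sphere), together with the fact that an $m$-isometry is an $(m+k)$-isometry. This gives $\sum_{n=m+1}^{m+k}(-1)^n\binom{m+k}{n}\sum_{|\alpha|=n}\frac{n!}{\alpha!}\|Pz^\alpha\|^2=0$ for every $k\Ge 1$, and strong induction on $|\alpha|\Ge m+1$ then yields $Pz^\alpha=0$.

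A minor point on your spectral-radius argument in case (i): a pointwise statement about $\inp{Q^n(I)x}{x}$ does not by itself bound $\|Q^n(I)\|$. You need the operator identity $Q^n(I)=\sum_{k<m}\binom{n}{k}B_k$ with bounded $B_k$, or simply cite the fact from \cite{GR2006}, as the paper does.
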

\begin{proof} In view of Remark~\ref{rmk-one-v}, we may assume that $d \Ge 2.$ Since the Taylor spectrum of any $m$-isometry is contained in the closed unit ball $\overline{\mathbb B^d}$ (see the discussion following \cite[Lemma~3.2]{GR2006}), we may assume that 
$\sigma(\mathscr M_z) \subseteq \overline{\mathbb B^d}.$ 
Since $\sigma(S^*)=\{\overline{z} : z \in \sigma(S)\}$ for any commuting $d$-tuple $S$ on $\mathcal H,$ by Lemma~\ref{Taylor-quotient} (applied to $S=\mathscr M^*_z$ and $\mathcal M=\mathscr H_\kappa \ominus [p]$), we have $\sigma(\mathscr T_z) \subseteq \overline{\mathbb B^d}.$ 
By \cite[Theorem]{P1984}, the Taylor spectrum of the minimal normal extension $N$ of $\mathscr T_z$ is contained in $\overline{\mathbb B^d}.$ 
Hence, there exists a positive measure $\mu$ on $\overline{\mathbb B^d}$ such that 
\beq \label{app-sp-thm-new-ball}
\|\mathscr T^{\alpha}_z(1)\|^2 =\int_{\overline{\mathbb B^d}} |w^{\alpha}|^{2} \,\D\mu(w), \quad \alpha \in  \mathbb Z^d_+.
\eeq

Assume that (i) holds and that $\deg\,p \Ge m+1$.
By parts (ii) and (iv) of Lemma~\ref{poly-calculus}, $1 \in \mathscr H_\kappa/[p]$, and
\beq
\label{eq-action-1}
\mathscr T^{\alpha}_z(1)=P^{\perp}(z^\alpha), ~\alpha \in \mathbb Z^d_+,
\\ \label{eq-action-2}
P(z^{\alpha})=0, ~|\alpha| \Le m, 
\\ \label{eq-action-3}
\mathscr T^{\alpha}_z(1) = z^\alpha, ~|\alpha| \Le m.
\eeq
Note that \eqref{app-sp-thm-new-ball}, together with the multinomial theorem, implies that  
\beqn
    \int_{\overline{\mathbb B^d}} \Big(1-\sum_{j=1}^d|w_j|^2\Big)^m \,\D\mu(w) &=& \sum_{n=0}^m (-1)^n \binom{m}{n}\sum_{\underset{|\alpha|=n}{\alpha \in \mathbb Z^d_+}}\frac{n!}{\alpha!}\|\mathscr T^{\alpha}_z(1)\|^2 \\
&\overset{\eqref{eq-action-3}}=& \sum_{n=0}^m (-1)^n \binom{m}{n}\sum_{\underset{|\alpha|=n}{\alpha \in \mathbb Z^d_+}}\frac{n!}{\alpha!}\|z^{\alpha}\|^2, 
\eeqn
which is $0$, since $\mathscr M_z$ is an $m$-isometry.
Since $1-\sum_{j=1}^d|w_j|^2$ is nonnegative on $\overline{\mathbb B^d}$ and $\mu$ is a positive measure, by Lemma~\ref{lem-support}, 
\beq \label{support-sphere}
\supp \,\mu \subseteq \partial \mathbb B^d.
\eeq 
\allowdisplaybreaks
This, combined with \eqref{app-sp-thm-new-ball} and $\|f\|^2=\|Pf\|^2+\|P^{\perp}f\|^2,$ $f \in \mathscr H_\kappa$, implies that 
		\beqn
	&&	\int_{\overline{\mathbb B^d}} \Big(1-\sum_{j=1}^d|w_j|^2\Big)^{m+k} \, \D\mu(w) \\
		&=& \sum_{n=0}^{m+k} (-1)^n \binom{m+k}{n}\sum_{\underset{|\alpha|=n}{\alpha \in \mathbb Z^{d}_+}}\frac{n!}{\alpha!}\|\mathscr T^{\alpha}_z(1)\|^2 \\
		&\overset{\eqref{eq-action-1}}=& \sum_{n=0}^{m+k} (-1)^n \binom{m+k}{n} \sum_{\underset{|\alpha|=n}{\alpha \in \mathbb Z^{d}_+}}\frac{n!}{\alpha!}\|P^{\perp}(z^{\alpha})\|^2 \\
		&=& \sum_{n=0}^{m+k} (-1)^n \binom{m+k}{n} \sum_{\underset{|\alpha|=n}{\alpha \in \mathbb Z^{d}_+}}\frac{n!}{\alpha!}\big(\|z^{\alpha}\|^2- \|P z^{\alpha}\|^2\big)\\
		&\overset{(*) \,\&\, \eqref{eq-action-2}}=& -\sum_{n=m+1}^{m+k} (-1)^n \binom{m+k}{n} \sum_{\underset{|\alpha|=n}{\alpha \in \mathbb Z^{d}_+}}\frac{n!}{\alpha!}\|P z^{\alpha}\|^2, \quad k \Ge 1,
		\eeqn
		where ($*$) follows from the fact that an $m$-isometry is also an $l$-isometry for any integer $l > m.$ It now follows from \eqref{support-sphere} that 
\beq
\label{induction-sp-m-iso}
\sum_{n=m+1}^{m+k} (-1)^n \binom{m+k}{n} \sum_{\underset{|\alpha|=n}{\alpha \in \mathbb Z^{d}_+}}\frac{n!}{\alpha!}\|P z^{\alpha}\|^2=0, \quad k \Ge 1.
\eeq
We prove by a strong induction on $|\alpha| \Ge m+1$ that
\beq
\label{ind-hypo-sp-m-iso}
P(z^{\alpha}) =0,~|\alpha| \Ge m+1.
\eeq
Letting $k=1$ in \eqref{induction-sp-m-iso}, we obtain 
\beqn 
\sum_{\underset{|\alpha|=m+1}{\alpha \in \mathbb Z^{d}_+}}\frac{(m+1)!}{\alpha!}\,\|P(z^{\alpha})\|^2=0 ~\Rightarrow ~ P(z^{\alpha}) =0,~|\alpha|=m+1.
\eeqn
This gives \eqref{ind-hypo-sp-m-iso} for $|\alpha|=m+1.$ Now assume that \eqref{ind-hypo-sp-m-iso} for $m+1 \Le |\alpha| \Le m+l$ for some integer $l \Ge 1.$ Applying \eqref{induction-sp-m-iso} to $k=l+1,$ we obtain  
\beqn
&&(-1)^{m+l+1} \sum_{\underset{|\alpha|=m+l+1}{\alpha \in \mathbb Z^{d}_+}}\frac{(m+l+1)!}{\alpha!}\|P z^{\alpha}\|^2\\
&=&\sum_{n=m+1}^{m+l+1} (-1)^n \binom{m+l+1}{n}\sum_{\underset{|\alpha|=n}{\alpha \in \mathbb Z^{d}_+}}\frac{n!}{\alpha!}\|P z^{\alpha}\|^2=0,
\eeqn
which yields \eqref{ind-hypo-sp-m-iso} for $|\alpha|=m+l+1.$ 
This completes the proof of \eqref{ind-hypo-sp-m-iso}. Since $\deg\,p \Ge m+1,$ $P(p)=0$ or $p\in \mathscr H_\kappa/[p],$ which is a contradiction.


Assume that \eqref{weak-m-iso} holds and that $\deg\,p \Ge m+1$.   
By Lemma~\ref{poly-calculus}(iv), $1 \in \mathscr H_\kappa/[p]$ and
$\mathscr T^n_{z_j}(1) = z^{n}_j$ for $n=1, \ldots, m$ and $1 \Le j \Le d$. In particular, for any $j=1, \ldots, d,$
\beqn
\int_{\overline{\mathbb B^d}} (1 - |w_j|^2)^m \, \D \mu(w)
&\overset{\eqref{app-sp-thm-new-ball}}=&
\sum_{n=0}^m (-1)^n \binom{m}{n}\|\mathscr T^n_{z_j}(1)\|^2 \\
&=& \sum_{n=0}^m (-1)^n\binom{m}{n}\|z^n_j\|^2,
\eeqn
which is $0$ by the assumption (ii). It now follows from Lemma~\ref{lem-support} that
\[
\supp\,\mu
\subseteq
\bigcap_{j=1}^d Z(1 - |w_j|^2) \cap \overline{\mathbb B^d},
\]
which is an empty set since $d \Ge 2.$ However, support of any nonzero, compactly supported positive Borel measure is nonempty. 
This contradiction shows that $\deg\,p \Le m.$ 
\end{proof}
\begin{remark} 
Proposition~\ref{spherical-m-iso-new} applies to the kernel $\kappa_b$ given by
\beqn
\kappa_b(z, w)=\frac{1}{(1-\inp{z}{w})^{b}}, \quad z, w \in \mathbb B^d,
\eeqn 
whenever $b$ is an integer such that $1 \Le b \Le d.$ Indeed, by \cite[Lemma~4.3]{GR2006}, the multiplication $d$-tuple $\mathscr M_z$ on $\mathscr H_{\kappa_b}$ is a $(d-b+1)$-isometry, 
and hence $\sigma(\mathscr M_z) \subseteq \overline{\mathbb B^d}$. 
It follows that if the quotient module $\mathscr H_{\kappa_b}/[p]$ is subnormal, then  
$\deg\,p \Le d-b+1.$ In particular, letting $b=d$, we obtain
\beq \label{Hardy-ball-n-cond}
\mbox{the subnormality of $H^2(\mathbb B^d)/[p]$ implies that $\deg\,p \Le 1.$}
\eeq
Moreover, if $b=1$, then 
\beq \label{Arveson-ball-n-cond}
\mbox{the subnormality of $H^2_d/[p]$ implies that $\deg\,p \Le 1.$}
\eeq
Since \eqref{weak-m-iso} is satisfied for $m=1$ (see \eqref{ip-Arveson}), \eqref{Arveson-ball-n-cond} follows immediately.
\eof
\end{remark}

\begin{proof}[Proof of Theorem~\ref{Hardy-ball-q-sub}] 

To see the equivalence (i)$\Leftrightarrow$(ii) for $H^2(\mathbb B^2),$
note that the implication (ii) $\Rightarrow$ (i) follows from Proposition~\ref{sub-linear-u-invariant}, while the implication (i) $\Rightarrow$ (ii) follows from \eqref{Hardy-ball-n-cond}.

%


To see the equivalence (i)$\Leftrightarrow$(ii) for $H^2_2,$
note that the implication (ii) $\Rightarrow$ (i) follows from Remark~\ref{example-Arveson-sub-deg-1}, while the implication (i)$\Rightarrow$(ii) follows from \eqref{Arveson-ball-n-cond}.

In both cases, the equivalence (ii)$\Leftrightarrow$(iii) follows from Proposition~\ref{lemma-main-thm}. 
\end{proof}

As a consequence of the proof of Proposition~\ref{rotation-q-sub}, we obtain the following dichotomy for (necessarily non-homogeneous) submodules. 
\begin{corollary}
Let $p, q\in \mathbb C[z_1,z_2],$ and let $\mathscr H_\kappa$ be a $\mathbb T^2$-invariant Hilbert module. If the quotient module $\mathscr H_\kappa/[pq^2]$ is subnormal, then either $[p] = [pq^2]$ or $[pq] = [pq^2]$. 
\end{corollary}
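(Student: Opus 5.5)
Write $\mathcal M=[pq^2]$, $P=P_{\mathcal M}$ and $P^\perp=P^\perp_{\mathcal M}$. The plan is to rerun the proof of Proposition~\ref{rotation-q-sub} with the vector $P^\perp(p)$ in place of a low-degree homogeneous polynomial. This removes any need for homogeneity or for the degree bookkeeping of Lemma~\ref{poly-calculus}(iv), which explains the phrase ``necessarily non-homogeneous''. Lemma~\ref{poly-calculus}(ii) holds for arbitrary polynomials; its proof only uses that $[pq^2]$ is invariant under $\mathscr M_z$. Hence
\[
q^2(\mathscr T_z)P^\perp(p)=P^\perp(pq^2)=0, \qquad q(\mathscr T_z)P^\perp(p)=P^\perp(pq).
\]
Here $[pq^2]$ is read as the closed submodule generated by $pq^2$, namely $\bigvee\{z^\alpha pq^2\}$. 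One also checks that Lemma~\ref{poly-calculus}(i),(ii) do not use homogeneity.

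Next, suppose $\mathscr T_z$ is subnormal with minimal normal extension $N$, and set $x=P^\perp(p)$. As in the proof of Proposition~\ref{rotation-q-sub}, \cite[Proof of Theorem~2.1]{CS1985} gives a positive compactly supported measure $\mu_x$ on $\sigma(N)$ with $\inp{\mathscr T_z^{*\alpha}\mathscr T_z^\beta x}{x}=\int \overline{w}^\alpha w^\beta\,\D\mu_x(w)$. Then
\[
0=\|q^2(\mathscr T_z)x\|^2=\int |q|^4\,\D\mu_x .
\]
Lemma~\ref{lem-support} applies, with the compact set $\sigma(N)$ in place of $\overline{\Omega}$; its proof is unchanged. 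It yields $\supp\,\mu_x\subseteq Z(q)$, so $\|q(\mathscr T_z)x\|^2=\int|q|^2\,\D\mu_x=0$. Therefore $P^\perp(pq)=0$, that is, $pq\in[pq^2]$.

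Finally, $pq\in[pq^2]$ gives $[pq]\subseteq[pq^2]$, since $[pq^2]$ is a closed submodule containing $pq$. The reverse inclusion $[pq^2]\subseteq[pq]$ always holds, so $[pq]=[pq^2]$. This shows that the second alternative always holds whenever the quotient is subnormal, and the dichotomy follows. The alternative $[p]=[pq^2]$ is only needed to cover a degenerate reading, for instance $q$ constant, where both equalities are trivial.

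The main point needing care is the middle step, getting a scalar spectral measure for an arbitrary vector $x$ of the quotient. This is standard: take the spectral measure $E$ of $N$ and set $\mu_x=\inp{E(\cdot)x}{x}$, using $\mathscr T_z^{*\alpha}\mathscr T_z^\beta x=P_{\mathscr H}N^{*\alpha}N^\beta x$ for $x\in\mathscr H$. So no real obstacle is expected. Unlike Proposition~\ref{rotation-q-sub}, there is no contradiction to derive here; the argument yields the equality of submodules directly.
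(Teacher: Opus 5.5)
Your proposal is correct and is essentially the paper's proof: the paper also takes the scalar measure of $r=P^\perp_{[pq^2]}(p)$, uses $q^2(\mathscr T_z)r=P^\perp_{[pq^2]}(pq^2)=0$ together with Lemma~\ref{lem-support} to get $P^\perp_{[pq^2]}(pq)=q(\mathscr T_z)r=0$, and concludes $[pq]=[pq^2]$. The only difference is that the paper first assumes $[p]\neq[pq^2]$ so that $r\neq 0$, but its argument never uses this, so you are right that $[pq]=[pq^2]$ holds outright; this is consistent with the statement, since $[pq^2]\subseteq[pq]\subseteq[p]$ means the first alternative implies the second anyway.
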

\begin{proof} 
Suppose that 
$[p] \neq [pq^2].$ 
Since $[pq^2] \subseteq [p]$ and $[p] \neq [pq^2],$ we have $r:=P^\perp_{[pq^2]}(p) \neq 0$. Assume that $\mathscr H_\kappa/[pq^2]$ is subnormal. 
Thus, there exists a positive measure $\mu_{r}$ on a compact set $\sigma$ such that 
\beqn
\inp{\mathscr T_{z}^{*m} \mathscr T_{z}^{n}(r)}{r} =\int_{\sigma} \overline{w}^m w^n \, \D\mu_{r}(w), \quad m, n \in \mathbb Z^2_+.
\eeqn
 By Lemma~\ref{poly-calculus}(ii), 
\beqn
q^2(\mathscr T_z)r= q^2(\mathscr T_z)P^\perp_{[pq^2]}(p) = P^\perp_{[pq^2]}(q^2 p)=0, 
\eeqn
and consequently,  
\beqn
    \int_{\sigma} |q^2(w)|^2 d\mu_{r}(w) = \| q^2(\mathscr T_z)r\|^2 =0.
\eeqn
Since $|q^2|^2$ is nonnegative, 
by Lemma~\ref{lem-support}, $\mu_r(\sigma\backslash Z(q))=0,$ and hence
\beqn
\|q(\mathscr T_z)r\|^2 =\int_{Z(q)} |q(w)|^2 \, \D\mu_{r}(w) = 0.
\eeqn
However, by Lemma~\ref{poly-calculus}(ii), $P^\perp_{[pq^2]}(qp) =q(\mathscr T_z)r=0.$ This implies that $pq \in [pq^2]$. Since
$[pq^2] \subseteq [pq],$ we obtain $[pq]= [pq^2].$ 
\end{proof}

\section{Examples illustrating the main results}\label{S5}

We show below that Theorem~\ref{Hardy-q-sub} fails for $\mathbb T^2$-invariant Hilbert modules on $\mathbb D^2$.
To see this, consider the homogeneous polynomial $p(z_1, z_2)=z_1z_2,$ and  
note that, by Lemma~\ref{coro-generators} (applied to $r=0,$ $s=1,$ and $a=0$), 
\beq \label{ogb-z1z2}
\mbox{$\{z^m_1 : m \Ge 0\} \cup \{z^n_2 : n \Ge 1\}$ forms an orthogonal basis for $\mathscr H_\kappa/[z_1z_2].$}
\eeq
It is now straightforward to check that
\beq
\label{action-Tz-12}
 \left.
 \begin{array}{ccc}
\mathscr T_{z_1}(z^m_1)=z^{m+1}_1, ~
\mathscr T_{z_1}(z^n_2)=0, \\[4pt]
 \mathscr T_{z_2}(z^n_1)=0, ~
\mathscr T_{z_2}(z^m_2)=z^{m+1}_2,
\end{array}
\right\}
 \quad m \Ge 0, ~n \Ge 1.
\eeq
We next state a key fact that helps determine an appropriate kernel $\kappa$ for the desired example.

\begin{proposition} \label{prop-nonlinear}
Let $\mathscr H_\kappa$ be a $\mathbb T^2$-invariant Hilbert module. Assume that 
\beq
\label{rep-measure}
\|z_1\|^2 + \|z_2\|^2=\|1\|^2, ~\|z^n_j\|=\|z_j\|, ~j=1, 2, ~n \Ge 2.
\eeq
Then the quotient module $\mathscr H_\kappa/[z_1z_2]$ is subnormal. In particular, $\mathscr T_z$ on $\mathscr H_\kappa/[z_1z_2]$ is an isometry.
\end{proposition}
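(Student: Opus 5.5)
The plan is to show directly that $\mathscr T_z$ on $\mathscr H_\kappa/[z_1z_2]$ is a spherical isometry, i.e. $\mathscr T^*_{z_1}\mathscr T_{z_1}+\mathscr T^*_{z_2}\mathscr T_{z_2}=I$. Commuting spherical isometries are subnormal by Athavale's theorem (their minimal normal extension is a spherical unitary). Throughout, the word ``isometry'' in the statement is read in the paper's sense of a $1$-isometry, which is exactly the identity $Q_{\mathscr T_z}(I)=I$.

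Work with the orthogonal basis \eqref{ogb-z1z2}, $\{z_1^m\}_{m\Ge 0}\cup\{z_2^n\}_{n\Ge1}$, and the action \eqref{action-Tz-12}. It suffices to verify the identity
\[
\|\mathscr T_{z_1}f\|^2+\|\mathscr T_{z_2}f\|^2=\|f\|^2
\]
for every $f$ in the span of the basis and then polarize.

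Write $f=c_0+\sum_{m\Ge1}a_m z_1^m+\sum_{n\Ge1}b_n z_2^n$. By \eqref{action-Tz-12},
\[
\mathscr T_{z_1}f=c_0z_1+\sum_{m\Ge1} a_m z_1^{m+1}, \qquad
\mathscr T_{z_2}f=c_0z_2+\sum_{n\Ge1} b_n z_2^{n+1}.
\]
The monomials are mutually orthogonal by \eqref{onbasis}, so
\begin{align*}
\|\mathscr T_{z_1}f\|^2+\|\mathscr T_{z_2}f\|^2
&=|c_0|^2\big(\|z_1\|^2+\|z_2\|^2\big)+\sum_{m\Ge1}|a_m|^2\|z_1^{m+1}\|^2+\sum_{n\Ge1}|b_n|^2\|z_2^{n+1}\|^2.
\end{align*}
Hypothesis \eqref{rep-measure} gives $\|z_1\|^2+\|z_2\|^2=\|1\|^2$ and $\|z_j^{k+1}\|=\|z_j\|=\|z_j^{k}\|$ for $k\Ge1$. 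Hence the right-hand side equals $\|f\|^2$. Polarization then yields $\mathscr T^*_{z_1}\mathscr T_{z_1}+\mathscr T^*_{z_2}\mathscr T_{z_2}=I$ on a dense subspace, and by boundedness on all of $\mathscr H_\kappa/[z_1z_2]$.

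Commutativity of $\mathscr T_z$ is already known, since the compression to a co-invariant subspace preserves it (Lemma~\ref{poly-calculus}). So the only remaining step is to invoke the subnormality of commuting spherical isometries. If I preferred an explicit construction, I would instead realize $\mathscr T_z$ concretely. Take the two weighted-shift pieces $z_1^m$ and $z_2^n$, whose weights are all $1$ after the first step, and glue them along $1$. This amounts to a unitary dilation where $\mathscr T_{z_1}$ acts as a shift on the $z_1$-branch.

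The only real obstacle is this citation/identification step; the norm computation itself is routine once \eqref{action-Tz-12} is available.
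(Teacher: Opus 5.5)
Your proposal is correct and follows the paper's proof essentially verbatim. The paper also expands $f$ in the orthogonal basis \eqref{ogb-z1z2}, uses \eqref{action-Tz-12} and \eqref{rep-measure} to show that $\|f\|^2-\|\mathscr T_{z_1}f\|^2-\|\mathscr T_{z_2}f\|^2=0$, and then cites Athavale's result \cite[Proposition~2]{A1990} that commuting spherical isometries are subnormal. Your closing aside about an explicit ``glued'' dilation is only sketched and is not needed, so you can drop it.
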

\begin{proof}
By \eqref{ogb-z1z2}, we may write $f \in \mathscr H_\kappa/[z_1z_2]$ as $\sum_{m = 0}^\infty a_m z^m_1 + \sum_{n = 1}^\infty b_n z^n_2$ for some scalars $a_m, b_n \in \mathbb C.$ Note that 
\beqn
&& \|f\|^2-\|\mathscr T_{z_1}f\|^2 - \|\mathscr T_{z_2}f\|^2 
\overset{\eqref{action-Tz-12}}= |a_0|^2(\|1\|^2-\|z_1\|^2-\|z_2\|^2) \\ &+& 
\sum_{n = 1}^\infty |a_n|^2(\|z^n_1\|^2 -\|z^{n+1}_1\|^2) + \sum_{n = 1}^\infty |b_n|^2(\|z^n_2\|^2-\|z^{n+1}_2\|^2), 
\eeqn
which is $0$ in view of \eqref{rep-measure}. Thus $\mathscr T_z$ is an isometry, and hence by \cite[Proposition~2]{A1990}, $\mathscr T_z$ is subnormal. 
\end{proof}

We now present an example of a subnormal quotient module $\mathscr H_\kappa/[p]$ when $\deg\,p = 2.$ 

\begin{example} \label{fail-linear-bidisc}
Consider the $\mathbb T^2$-invariant Hilbert module $\mathscr H_\kappa$ on the unit bidisc $\mathbb D^2$ associated with the reproducing kernel 
\beqn
\kappa(z, w)=\frac{2}{\prod_{j=1}^2(1-z_j \overline{w}_j)}-1, \quad z=(z_1, z_2), \, w=(w_1, w_2) \in \mathbb D^2.
\eeqn
It follows from \eqref{onbasis} that  
\beqn
\|z^\alpha\|^2 = \begin{cases} 1 & \mbox{if~}\alpha = 0, \\
\frac{1}{2} & \mbox{if~}\alpha \in \mathbb Z^2_+\backslash \{0\}.
\end{cases}
\eeqn
Note that $\mathscr H_\kappa$ is a subnormal Hilbert module. Indeed, 
\beqn
\|z^\alpha\|^2 = \int_{[0, 1]^2} t^{\alpha}\, \D(\delta_{(0, 0)}+\delta_{(1, 1)})/2, \quad \alpha \in \mathbb Z^2_+.
\eeqn
Since \eqref{rep-measure} is satisfied, by Proposition~\ref{prop-nonlinear}, $\mathscr H_\kappa/[z_1z_2]$ is subnormal. 
\hfill $\diamondsuit$
\end{example} 

Clearly, the Hilbert module in Example~\ref{fail-linear-bidisc} is not $\mathcal U_2$-invariant. This raises the question of whether a $\mathcal U_2$-invariant Hilbert module $\mathscr H_\kappa$ admits a subnormal quotient module $\mathscr H_\kappa/[p]$ when $\deg\,p = 2.$  The answer is affirmative.

\begin{example} \label{fail-linear-ball}
		Consider the $\mathcal{U}_2$-invariant Hilbert module $\mathscr H_\kappa$ on the unit ball $\mathbb B^2$ associated with the reproducing kernel 
		\beqn
		\kappa(z, w)=\frac{2}{(1-\inp{z}{w})^2}-1, \quad z, w \in \mathbb B^2.
		\eeqn
It follows from \eqref{onbasis} that  
		\beq \label{norm-u2}
		\|z^\alpha\|^2 = \begin{cases} 1 & \mbox{if~}\alpha = 0, \\
			\frac{\alpha!}{2(|\alpha|+1)!} & \mbox{if~}\alpha \in \mathbb Z^2_+\backslash \{0\}.
		\end{cases}
		\eeq
Then $\mathscr H_\kappa$ is a subnormal Hilbert module. Indeed, since $\mathscr M_z$ on $H^2(\mathbb B^2)$ is subnormal, $\{\|z^\alpha\|^2_{H^2(\mathbb B^2)}\}_{\alpha \in \mathbb Z^2_+}$ (see \eqref{ip-Hardy-ball}) is a Stieltjes moment sequence. If $\mu$ is the representing measure of $\{\|z^\alpha\|^2_{H^2(\mathbb B^2)}\}_{\alpha \in \mathbb Z^2_+}$, then for every $\alpha \in \mathbb Z^2_+,$
		\beqn
		\|z^\alpha\|^2_{\mathscr H_\kappa} = \frac{1}{2}\|z^{\alpha}\|^2_{H^2(\mathbb B^2)} + \frac{1}{2}\int_{[0, 1]^2} t^{\alpha}\, \D\delta_{(0, 0)}=
\int_{[0, 1]^2} t^{\alpha}\, \D\big(\mu+\delta_{(0, 0)}\big)/2.
		\eeqn
It is easy to see using \eqref{action-Tz-12} and \eqref{norm-u2} that for $\alpha=(\alpha_1, \alpha_2) \in \mathbb Z^2_+,$ 
		\beqn
		\|\mathscr T_{z}^\alpha(1)\|^2=\begin{cases}
			\|1\|^2_{\mathscr H_\kappa}=1,&\alpha=0,\\
			\|z_1^{\alpha_1}\|^2_{\mathscr H_\kappa}=\frac{1}{2(\alpha_1+1)},& \alpha_1 \Ge 0, \alpha_2=0,\\
			\|z_2^{\alpha_2}\|^2_{\mathscr H_\kappa}=\frac{1}{2(\alpha_2+1)},& \alpha_1=0, \alpha_2 \Ge 0,\\
			0,& \alpha_1 \Ge 1, \alpha_2 \Ge 1,
		\end{cases}
		\eeqn
and hence we have
\beqn
\|\mathscr T_{z}^\alpha(1)\|^2=\int_{\{0\}\times[0,1]\cup [0,1]\times\{0\}} t^\alpha \, \D(\delta_0 \times \ell + \ell \times \delta_0)/2, \quad \alpha \in \mathbb Z^2_+.
\eeqn
It is now not difficult to see using \eqref{action-Tz-12} that for $f = \sum_{m = 0}^\infty a_m z^m_1 + \sum_{n = 1}^\infty b_n z^n_2$ in $\mathscr H_\kappa/[z_1z_2],$ the representing measure $\mu_f$ of $\{\|\mathscr T_{z}^\alpha(f)\|^2\}_{\alpha \Ge 0}$ is given by  
		\beqn
		\mu_f= |a_0|^2(\ell \times \delta_0 + \delta_0 \times \ell)/2 +\sum_{m = 1}^{\infty}|a_m|^2 (\ell \times\delta_0)/2 + \sum_{n = 1}^{\infty}|b_n|^2 (\delta_0\times \ell)/2.
		\eeqn
Thus 
$\{\|\mathscr T_{z}^\alpha(f)\|^2\}_{\alpha \Ge 0}$ is a Stieltjes moment sequence for every $f \in \mathscr H_\kappa / [z_1z_2].$
It now follows from \cite[Theorem 4.4]{A1987} that $\mathscr H_\kappa/[z_1z_2]$ is subnormal.
\hfill $\diamondsuit$
	\end{example}

We show below that the assumption that $\mathscr H_\kappa$ is subnormal cannot be dropped in Proposition~\ref{sub-linear-u-invariant}. 
Recall that the {\it Dirichlet module} $D_d(\mathbb B^d)$ is the reproducing kernel Hilbert space with kernel $$\kappa(z,w)= \sum_{\alpha \in \mathbb Z^d_+}\frac{|\alpha|!}{(|\alpha|+1)\alpha!}z^\alpha \overline{w}^\alpha, \quad z, w \in \mathbb B^d.$$ 

\begin{example} \label{Diri-module}
Consider the polynomial $p(z_1, z_2)=z_1-az_2,$ where $a$ is a nonzero complex number.  The quotient module $D_2(\mathbb B^2)/[p]$ is not subnormal. 
To see this, recall that the norm on $D_d(\mathbb B^d)$ is given by 
\beq
\label{monomial-Dirichlet}
\|z^\alpha\|^2 = \frac{(|\alpha|+1)\alpha!}{|\alpha|!}, \quad \alpha \in \mathbb Z^d_+. 
\eeq
It follows that
\beqn
\sum_{k=0}^{n} \frac{{|a|}^{2k}}{\|z^k_1 z^{n-k}_2\|^2}= \frac{(1+|a|^2)^n}{n+1}, \quad n \Ge 0.
\eeqn
Note that $\{\gamma_n:=\frac{n+1}{(1+|a|^2)^n}\}_{n \Ge 0}$ is a not Stieltjes moment sequence. Indeed, $\gamma^2_n > \gamma_{2n}$ for every integer $n \Ge 1$. 
Hence, by Lemma~\ref{lem-sub-linear}(iii), the quotient module $D_2(\mathbb B^2)/[p]$ is not subnormal.
\hfill $\diamondsuit$
\end{example}

We now present an example showing that, if $\mathscr H_\kappa$ is a subnormal $\mathbb T^2$-invariant Hilbert module, then the quotient module $\mathscr H_\kappa/[p]$ may fail to be subnormal, even when $\deg\,p=1$.
\begin{example} \label{exam-m-tensor}
For a real number $s > 0,$ consider the Hilbert space $L^2_a(\mathbb D, w_s)$ of holomorphic functions defined on the open unit disc $\mathbb D$, which are square-integrable with respect to the weighted area measure $w_s\,dA$
 with radial weight function $$w_s(z)=\frac{1}{s \pi}|z|^{\frac{2(1-s)}{s}}, \quad z \in \mathbb D.$$
Then $L^2_a(\mathbb D, w_s)$ is a subnormal $\mathbb T$-invariant Hilbert module. 
Also, if $\kappa_s$ is given by 
\beqn
\kappa_s(z, w)=\frac{s}{(1-z\overline{w})^2} + \frac{1-s}{1-z\overline{w}}, \quad z, w \in \mathbb D,
\eeqn
then $L^2_a(\mathbb D, w_s)$ is equal to $\mathscr H_{\kappa_s},$ and satisfies 
\beq
\label{norm-wt-Berg}
\|z^k\|^2_{\mathscr H_{\kappa_{s}}}=\frac{1}{sk+1}, \quad k \in \mathbb Z_+.
\eeq 
For real numbers $s_1, s_2 > 0,$ consider the $\mathbb T^2$-invariant Hilbert module $\widehat{\mathscr H_{\kappa_{s_1}}\otimes\mathscr H_{\kappa_{s_1}}}$ on the bidisc $\mathbb D^2.$
Note that
\beqn 
\inp{z^{\alpha_1}_1 z^{\alpha_2}_2}{z^{\beta_1}_1 z^{\beta_2}_2} &=& \inp{z^{\alpha_1}}{z^{\beta_1}}_{\mathscr H_{\kappa_{s_1}}} \inp{z^{\alpha_2}}{z^{\beta_2}}_{\mathscr H_{\kappa_{s_2}}} \\
&\overset{\eqref{norm-wt-Berg}}=& \begin{cases}  
\frac{1}{(s_1\alpha_1 +1)(s_2\alpha_2 +1)}, & \mbox{if}~ \alpha=\beta \in \mathbb Z^2_+, \\
0, & \mbox{if}~\alpha \neq \beta \in \mathbb Z^2_+,
\end{cases}
\eeqn
and hence by \cite[Lemma 2.1]{BD2004}, $\widehat{\mathscr H_{\kappa_{s_1}}\otimes\mathscr H_{\kappa_{s_1}}}$ is a subnormal Hilbert module. 
Let $p(z_1, z_2)=z_1-z_2.$ 
We may now conclude the following facts from \cite[Corollary 1.6]{AC2017} and Lemma~\ref{lem-sub-linear}(iv):
\begin{enumerate}
\item[$\bullet$] if $({3\mathfrak s - \mathfrak p})^2 \Ge 24\, {\mathfrak p}$ then $\widehat{\mathscr H_{\kappa_{s_1}}\otimes\mathscr H_{\kappa_{s_1}}}/[p]$ is subnormal if and only if $3\mathfrak s > \mathfrak p$,
\item[$\bullet$] if $({3\mathfrak s - \mathfrak p})^2 < 24\, {\mathfrak p}$ then $\widehat{\mathscr H_{\kappa_{s_1}}\otimes\mathscr H_{\kappa_{s_1}}}/[p]$ is subnormal if and only if 
$\mathfrak s \Ge \mathfrak p,$
\end{enumerate}
where $\mathfrak s$ and $\mathfrak p$ denote the sum and product of $s_1, s_2,$ respectively.
In particular, if $\mathfrak s \Ge \mathfrak p,$ then $\widehat{\mathscr H_{\kappa_{s_1}}\otimes\mathscr H_{\kappa_{s_1}}}/[p]$ is subnormal. 
Also, if $3 \mathfrak s \Le \mathfrak p,$ then $\widehat{\mathscr H_{\kappa_{s_1}}\otimes\mathscr H_{\kappa_{s_1}}}/[p]$ is never subnormal. \hfill $\diamondsuit$
\end{example}

We conclude the paper with several unresolved problems.
\begin{enumerate}
\item[$\bullet$] For an integer $d \Ge 3,$ it is not known whether, for every homogeneous polynomial $p$ in $\mathbb C[z_1, \ldots, z_d]$ of degree $1$, the quotient module $H^2(\mathbb D^d)/[p]$ is subnormal. The corresponding problem for $H^2(\mathbb B^d)$ or $H^2_d$ when $d \Ge 3$ also remains open. 
\item[$\bullet$]
Another question arises from Lemmata~\ref{poly-calculus}(i) and \ref{poly-calculus-new}(iii). If $p \in \mathbb C[z_1, z_2]$ is a  homogeneous polynomial and $\mathscr H_\kappa$ is a $\mathbb T^2$-invariant Hilbert module with submodule $[p],$ then $p$ is the polynomial of smallest degree satisfying $p(\mathscr T_{z, [p]})=0.$ We do not know whether this fact extends to dimensions $d \Ge 3.$ 
\item[$\bullet$]
Moreover, it remains unknown whether there exists a nonconstant homogeneous polynomial $p \in \mathbb C[z_1, z_2]$ such that the quotient module $D_2(\mathbb B^2)/[p]$ is subnormal. Since $\mathscr M_z$ on $D_2(\mathbb B^2)$ is a $3$-isometry (see \cite[Corollary~4.9]{CK2014}), it follows from Proposition~\ref{spherical-m-iso-new} and Example~\ref{Diri-module} that
any such polynomial $p$ must have degree either $2$ or $3$. 
However,  $\sigma(\mathscr M_z) \subseteq \overline{\mathbb B^2}$, and $$\|1\|^2-2\|z_j\|^2+\|z^2_j\|^2\overset{\eqref{monomial-Dirichlet}}=0, \quad j=1, 2,$$ and hence Proposition~\ref{spherical-m-iso-new} rules out the possibility that $\deg\,p=3$. Furthermore, explicit computations using Lemma~\ref{coro-generators} suggest that $D_2(\mathbb B^2)/[p]$ is never subnormal unless $p$ is a nonzero constant polynomial.
\end{enumerate}

\section*{Appendix: Square-free polynomials and Hilbert submodules}

In this appendix, we present two basic facts concerning the homogeneous polynomials used in the main body of the paper.
We begin with the existence and uniqueness of the canonical decomposition of a homogeneous polynomial in two variables (cf. \cite[p.~378]{CLO2015}, \cite[Proof of Theorem~2.1.5]{CG2003}).

\begin{propositionA*}
{\it Let $m, n$ be positive integers such that $m \Le n$.
Let $p$ be a homogeneous polynomial given by $p(z_1,z_2)= \sum_{k=0}^{m}\alpha_k z_1^kz_2^{n-k},$ where $\alpha_0, \ldots, \alpha_{m-1} \in \mathbb C$ and $\alpha_m  \in \mathbb C \backslash \{0\}.$ 
Then there exist $a_1, \ldots, a_{m} \in \mathbb{C}$ such that 
\beq \label{cano-de}
p(z)= \alpha_m z_2^{n-m}\prod_{k=1}^{m} (z_1-a_kz_2).
\eeq
Further, if for some $c, d \in \mathbb C\backslash \{0\},$ $r, m \Ge 0,$ $s, n \Ge 1,$ and $a_1,\ldots, a_s, b_1,\ldots, b_n \in \mathbb C,$
\beqn 
c z_2^{r}\prod_{k=1}^{s} (z_1-a_kz_2)=d z_2^{m}\prod_{k=1}^{n} (z_1-b_kz_2), 
\eeqn
then 
$c=d,$ $r=m,$ $n=s,$ and $\{a_1,\ldots,a_s\}=\{b_1,\ldots, b_n\}.$} 
\end{propositionA*}
\begin{proof} 
Note that $p(z_1,z_2)= \alpha_{m} z_2^{n} q(z_1/z_2),$ where $q(w)=\sum_{k=0}^{m}\beta_k w^k$ with $\beta_k = \frac{\alpha_k}{\alpha_m}.$ 
Since $q$ is a polynomial in the complex variable $w$ of degree $m,$ there exist $a_1, \ldots, a_{m} \in \mathbb{C}$ such that $q(w)= \prod_{k=1}^{m} (w-a_k).$ Substituting $w=z_1/z_2,$ we get \eqref{cano-de}.
Since $q$ uniquely determines its zeros (up to a permutation), 
the uniqueness part follows.
\end{proof}
\begin{remarkA*} 
The decomposition in Proposition\,A fails in three complex variables. Indeed, a straightforward computation shows that the homogeneous polynomial $p(z_1, z_2, z_3)=z^2_1+z^2_2+z^2_3$ of degree $2$ is irreducible.
\eof
\end{remarkA*}
 
The following result characterizes square-free homogeneous polynomials in two complex variables. 
\begin{propositionB*} 
{\it A homogeneous polynomial in two complex variables is square-free if and only if it has distinct linear factors.}
\end{propositionB*}
\begin{proof}
The necessity part follows from Proposition\,A.    
To see the sufficiency part, let $p$ be a homogeneous polynomial with distinct linear factors. Suppose $p=q^2r$ for some $q, r \in \mathbb C[z_1, z_2].$ By decomposing a polynomial into homogeneous parts, it is easy to see that any polynomial factor of a homogeneous polynomial must be homogeneous, and hence $r$ and $q$ are homogeneous. 
If $q$ is nonconstant, then by Proposition\,A, $q$ can be factored into linear polynomials, and consequently, $p$ contains squares of linear factors, which is a contradiction. Therefore, $q$ is constant, and hence $p$ is square-free.
\end{proof}

In the remaining part of this appendix, we discuss some applications of Propositions\,A and B to Hilbert modules. First, we show, with the help of an example, that $p\mathscr H_\kappa$ need not be closed in $\mathscr H_\kappa.$
\begin{exampleA*} 
Let $p(z_1, z_2)=z_1-z_2.$ Since 
the sequence $\{\sum_{n=1}^{k} \frac{1}{n}(z^n_1 - z^n_2)\}_{k \Ge 1}$ in $[p]$ is convergent in $H^2(\mathbb D^2)$, 
$f(z_1,z_2)= \sum_{n=1}^{\infty} \frac{1}{n}(z^n_1 - z^n_2)$ belongs to $[p].$ On the other hand, if $f=pg$ for some
$g \in H^2(\mathbb D^2),$ then for $z_1 \neq z_2,$
$$g(z_1, z_2)=\frac{f(z_1,z_2)}{z_1-z_2}= \sum_{n=1}^{\infty} \frac{1}{n} \sum_{k=0}^{n-1} z^k_1 z^{n-1-k}_2,$$ which defines a holomorphic function on $\mathbb D^2$ but does not belong to $H^2(\mathbb D^2)$. Thus $f \notin p H^2(\mathbb D^2).$ In particular, $pH^2(\mathbb D^2) \subsetneq [p].$ 
\hfill $\diamondsuit$
\end{exampleA*}

The following theorem was referred to in Section~\ref{S1}.
\begin{theoremA*}
{\it Let $p$ be a nonzero homogeneous square-free polynomial, and let $\mathscr H_\kappa$ be a $\mathbb T^2$-invariant Hilbert module. Then $[p]_0 = [p].$} 
\end{theoremA*}

Our proof of Theorem\,A depends on the following general fact:
 
\begin{lemmaA*}
{\it Let $\mathscr H_\kappa$ be a $\mathbb T^2$-invariant Hilbert module on $\Omega$.
Let $p$ be a homogeneous polynomial in $\mathbb C[z_1, z_2]$ such that $ph \in \mathscr H_\kappa$ for some holomorphic function $h$ on $\Omega$. Then $ph\in [p]$.}
\end{lemmaA*}
\begin{proof}
Let $p(z)=\sum_{l=0}^{d} \alpha_l z_1^{l} z_2^{d-l}$ be a nonzero homogeneous polynomial of degree $d$ and $ph= f \in  \mathscr H_\kappa.$ For $m \Ge 0,$ let $h_m =  \sum_{n=0}^m \sum_{k=0}^{n}\hat{h}_{k,n-k} z_1^kz_2^{n-k}$ be the $m$-th partial sum of $h$. Since $\mathscr H_\kappa$ contains all polynomials, 
$ph_m \in \mathscr H_\kappa$ for every $m \Ge 0.$ By \eqref{onbasis}, we can write $f$ as $\sum_{n=0}^\infty \sum_{k=0}^{n} \hat{f}_{k, n-k} z_1^kz_2^{n-k}.$
Since $ph= f,$  
\beqn
 \sum_{n=0}^\infty \sum_{l=0}^{d}\sum_{k=0}^{n} \alpha_l \hat{h}_{k,n-k} z_1^{k+l}z_2^{n+d-k-l} 
&=& \sum_{n=0}^\infty \sum_{l=0}^{d} \alpha_l z_1^{l} z_2^{d-l}\sum_{k=0}^{n}\hat{h}_{k,n-k} z_1^kz_2^{n-k}\\
&=&\sum_{n=0}^\infty \sum_{k=0}^{n} \hat{f}_{k, n-k} z_1^kz_2^{n-k},
\eeqn
where rearrangement of terms in these series is possible because of their uniform convergence on compact sets (see \eqref{kappa-cts} and \eqref{rkp}).
Since both the sides are homogeneous decompositions, we obtain
\beqn 
\sum_{l=0}^{d}\sum_{k=0}^{n} \alpha_l \hat{h}_{k,n-k} z_1^{k+l}z_2^{n+d-k-l}=\sum_{k=0}^{n+d} \hat{f}_{k, n+d-k} z_1^kz_2^{n+d-k}, \quad n \Ge 0.
\eeqn
It follows that for integers $m \Ge 0,$
\beqn
    \|ph_m-ph\|^2 &=& \Big\| \sum_{n=m+1}^\infty \sum_{l=0}^{d}\sum_{k=0}^{n} \alpha_l \hat{h}_{k,n-k} z_1^{k+l}z_2^{n+d-k-l}\Big\|^2\\
    &=& \Big\| \sum_{n=m+1}^\infty \sum_{k=0}^{n+d} \hat{f}_{k,n+d-k} z_1^kz_2^{n+d-k}\Big\|^2,
\eeqn
which converges to $0$ as $m \rar \infty$. Hence, $ph_m \in [p]$ converges to $ph$ in ${\mathscr H_\kappa},$ and therefore $ph \in [p].$
\end{proof}

\begin{proof}[Proof of Theorem\,A]
Clearly, $[p] \subseteq [p]_0.$ To see the reverse inclusion, note that by Propositions\,A and B, 
$p$ can be decomposed into distinct linear factors $a_jz_1-b_jz_2$ for some $a_j, b_j \in \mathbb C,$ $1 \Le j \Le d$. Let $f \in \mathscr H_\kappa$ be such that $f=0$ on $Z(p)$. Since $Z(a_1z_1-b_1z_2) \subseteq Z(f)$, by \cite[Lemma~2.1]{BCJ2025}, $\frac{f}{a_1z_1-b_1z_2}$ extends holomorphically to $\Omega$. Thus there exists a holomorphic function $h_1$ on $\Omega$ such that $$f(z_1, z_2) = (a_1z_1-bz_2) h_1(z_1, z_2), \quad (z_1, z_2) \in \Omega.$$ Since $Z(a_2z_1-b_2z_2) \subseteq Z(f)$ and $Z(a_1z_1-b_1z_2) \cap Z(a_2z_1-b_2z_2)=\{(0, 0)\},$ we have $Z(a_2z_1-b_2z_2) \subseteq Z(h_1)$. Once again by \cite[Lemma~2.1]{BCJ2025}, $\frac{h_1}{a_2z_1-b_2z_2}$ extends holomorphically to $\Omega$. Proceeding in the same way, we obtain $f = ph$ for some holomorphic function $h$ on $\Omega$. Hence, by Lemma\,A, $f \in [p].$
\end{proof}

We conclude the appendix with a consequence of Theorem\,A.
\begin{corollaryA*}
{\it Let $p$ be a non-constant homogeneous square-free polynomial, and let $\mathscr H_\kappa$ be a $\mathbb T^2$-invariant Hilbert module on $\Omega$. Then 
\beqn \mathscr H_\kappa /[p]=\bigvee \{\kappa_w : w \in Z(p)\cap \Omega\}.
\eeqn
In particular, $Z(p)\cap \overline{\Omega} \subseteq \sigma(\mathscr T_{z, [p]}) \subseteq Z(p).$}
\end{corollaryA*}
\begin{proof}
Let $\kappa_w :=\kappa(\cdot, w)$ for $w \in \Omega.$ By Theorem\,A, $$[p] =[p]_0= \{f \in \mathscr H_\kappa : \inp{f}{\kappa_w}=0,~w \in Z(p)\cap \Omega\},$$ which is same as $$\big\{f \in \mathscr H_\kappa : f \in \big(\vee \{\kappa_w : w \in Z(p)\cap \Omega\}\big)^{\perp}\big\} = \big(\vee \{\kappa_w : w \in Z(p)\cap \Omega\}\big)^{\perp}.$$
This gives the first part. This, combined with \eqref{adjoint} and the reproducing property \eqref{rkp}, yields the inclusion $Z(p)\cap {\Omega} \subseteq \sigma(\mathscr T_{z, [p]}).$ Since the Taylor spectrum is closed (see \cite[Theorem~3.1]{T1970}), we have $Z(p)\cap \overline{\Omega} \subseteq \sigma(\mathscr T_{z, [p]}).$ The remaining inclusion follows from Lemma~\ref{poly-calculus}(i) and the spectral mapping property (see \cite[Corollary~3.5]{C1988}). 
\end{proof}


\begin{thebibliography}{99}


\bibitem{AC2017} A. Anand, S. Chavan, Module tensor product of subnormal modules need not be subnormal, 
{\it J. Funct. Anal.} {\bf 272} (2017), 4752–4761.

\bibitem{A1998} W. Arveson, Subalgebras of $C^*$-algebras. III. Multivariable operator theory,
{\it Acta Math.} {\bf 181} (1998), 159–228.

\bibitem{A1987} A. Athavale, 
Holomorphic kernels and commuting operators,
{\it Trans. Amer. Math. Soc.} {\bf 304} (1987), 101–110.

\bibitem{A1990} A. Athavale, 
On the intertwining of joint isometries,
{\it J. Operator Theory} {\bf 23} (1990), 339–350. 

\bibitem{BM2001} B. Bagchi and G. Misra, Scalar perturbations of the Nagy-Foias characteristic function, IN Operator Theory : Advances and Application, special volume dedicated to the memory of Bela Sz.-Nagy, {\bf 127} (2001), 97–112.
 
\bibitem{BCJ2025}S. Bera, S. Chavan and S. Jain, A transference principle for involution-invariant functional Hilbert spaces,  {\it J. Geom. Anal.} {\bf 35} (2025), Paper No. 354, pp.~17.

\bibitem{BD2004} C. Berg, A. J. Durán, 
A transformation from Hausdorff to Stieltjes moment sequences, 
{\it Ark. Mat.} {\bf 42} (2004), 239–257.

\bibitem{C2015} S. Chavan, Irreducible tuples without the boundary property, 
{\it Canad. Math. Bull.} {\bf 58} (2015), 9–18.

\bibitem{CK2014} S. Chavan, S. Kumar, Spherically balanced Hilbert spaces of formal power series in several variables. I, {\it J. Operator Theory} {\bf 72} (2014), 405–428.


\bibitem{CM2020} S. Chavan, G. Misra, {\it
Notes on the Brown-Douglas-Fillmore theorem}. Camb. IISc Ser.
Cambridge University Press, Cambridge, 2021. xi+246~pp.

\bibitem{CY2015} S. Chavan, D. Yakubovich, Spherical tuples of Hilbert space operators, {\it Indiana Univ. Math. J.} {\bf 64} (2015), 577–612.

\bibitem{CG2003} X. Chen, K. Guo, {\it Analytic Hilbert modules},
Chapman $\&$ Hall/CRC Research Notes in Mathematics, 433. Chapman $\&$ Hall/CRC, Boca Raton, FL, 2003. viii+201 pp.

\bibitem{C-Z} M. Ch$\bar{\mbox{o}}$ and W. $\dot{\mbox{Z}}$elazko,
On the geometric radius of commuting $n$-tuples of operators, {\it Hokkaido Math. J.} {\bf 21} (1992), 251–258.

\bibitem{Co1991} J. B. Conway, {\it The theory of subnormal operators}, Amer. Math. Soc., Providence, RI,
1991.

\bibitem{CLO2015} D. A. Cox, J. Little, D. O'Shea, Donal, 
{\it Ideals, varieties, and algorithms—An introduction to computational algebraic geometry and commutative algebra}, Fourth edition. Undergraduate Texts in Mathematics. Springer, Cham, 2015. xvi+646~pp.

\bibitem{C1988} R. E. Curto, {\it Applications of several complex variables to multiparameter spectral theory}. In: J. B. Conway and B. B. Morrel (eds.), Surveys of some recent results in operator theory, II, Pitman Research Notes in Mathematics Series, 192, Longman Scientific $\&$ Technical, Harlow, 1988, pp. 25–90.

\bibitem{CS1985} R. E. Curto, N. Salinas, 
Spectral properties of cyclic subnormal $m$-tuples,
{\it Amer. J. Math.} {\bf 107} (1985), 113–138.

\bibitem{DGS2020} B. K. Das, S. Gorai, J. Sarkar, 
On quotient modules of $H^2(\mathbb D^n)$: essential normality and boundary representations, 
{\it Proc. Roy. Soc. Edinburgh Sect. A} {\bf 150} (2020), 1339–1359. 


\bibitem{D1984} R. Douglas, {\it Hilbert modules over function algebras}. Advances in invariant subspaces and other results of operator theory (Timişoara and Herculane, 1984), 125–139,
Oper. Theory Adv. Appl., {\bf 17}, Birkhäuser, Basel, 1986.

\bibitem{D1986} R. Douglas, {\it On Šilov resolution of Hilbert modules}. Special classes of linear operators and other topics (Bucharest, 1986), 51–60,
Oper. Theory Adv. Appl., {\bf 28}, Birkhäuser, Basel, 1988.



\bibitem{DM1993}  R. G. Douglas and G. Misra, Some calculations for Hilbert modules, {\it Journal of the
Orissa Mathematical Society}, {\bf 12} (1993), 75–85.

\bibitem{DM2008} R. G. Douglas, G. Misra, Equivalence of quotient Hilbert modules. II,
{\it Trans. Amer. Math. Soc.} {\bf 360} (2008), 2229–2264.

\bibitem{DMV2000} R. G. Douglas, G. Misra, C. Varughese, 
On quotient modules—the case of arbitrary multiplicity, {\it J. Funct. Anal.} {\bf 174} (2000), 364–398.

\bibitem{DMV2001} R. G. Douglas, G. Misra, C. Varughese, 
{\it Some geometric invariants from resolutions of Hilbert modules}. Systems, approximation, singular integral operators, and related topics (Bordeaux, 2000), 241–270, Oper. Theory Adv. Appl., {\bf 129}, Birkhäuser, Basel, 2001.

\bibitem{DP1989}  R. G. Douglas, V. I. Paulsen, {\it Hilbert modules over function algebras}. Pitman Research Notes in Mathematics Series, 217. Longman Scientific $\&$ Technical, Harlow; copublished in the United States with John Wiley $\&$ Sons, Inc., New York, 1989. vi+130~pp.

\bibitem{DY1990} R. G. Douglas, K. R. Yan, On the rigidity of Hardy submodules,
{\it Integral Equations Operator Theory} {\bf 13} (1990), 350–363.

\bibitem{FR2006} S. H. Ferguson, R. Rochberg, 
{\it Description of certain quotient Hilbert modules}. Operator theory {\bf 20}, 93–109, Theta Ser. Adv. Math., 6, Theta, Bucharest, 2006.

\bibitem{F1994} E. Franks, 
Polynomially subnormal operator tuples,
{\it J. Operator Theory} {\bf 31} (1994), 219–228.

\bibitem{GKMP2024} S. Ghara, S. Kumar, G. Misra, P. Pramanick, 
Commuting tuple of multiplication operators homogeneous under the unitary group, {\it J. Lond. Math. Soc.} {\bf 109} (2024), Paper No. e12890, 37~pp.

\bibitem{GR2006} J. Gleason, S. Richter, $m$-Isometric commuting tuples of operators on a Hilbert space, {\it Integral Equ. Oper. Theory} {\bf 56} (2006) 181–196.

\bibitem{GHX2004} K. Guo, J. Hu, X. Xu,
Toeplitz algebras, subnormal tuples and rigidity on reproducing C$[z_1, \ldots, z_d]$-modules, 
{\it J. Funct. Anal.} {\bf 210} (2004), 214–247.

\bibitem{GW2007} K. Guo and P. Wang,
Essentially normal Hilbert modules and K-homology, III. Homogeneous quotient modules of Hardy modules on the bidisk, {\it Sci. China Ser. A}, {\bf 50} (2007), 387–411.



\bibitem{H2023} M. Hartz, {\it An invitation to the Drury-Arveson space}. Lectures on analytic function spaces and their applications, 347-413. Fields Inst. Monogr., {\bf 39}


\bibitem{JP2008} M. Jarnicki, P. Pflug, {\it
First steps in several complex variables: Reinhardt domains},
EMS Textbooks in Mathematics. European Mathematical Society (EMS), Zürich, 2008. viii+359~pp.

\bibitem{L1977} A. Lubin, 
Weighted shifts and products of subnormal operators,
{\it Indiana Univ. Math. J.} {\bf 26} (1977), 839–845.

\bibitem{M-S} V. M$\ddot{\mbox{u}}$ller and A. Soltysiak, Spectral radius formula for commuting Hilbert space operators,
{\it Studia Math.} {\bf 103} (1992), 329–333.

\bibitem{PR2016} V. I. Paulsen, M. Raghupathi, {\it An introduction to the theory of reproducing kernel Hilbert spaces},
Cambridge Stud. Adv. Math., 152
Cambridge University Press, Cambridge, 2016. x+182~pp.

\bibitem{P1984} M. Putinar, 
Spectral inclusion for subnormal $n$-tuples,
{\it Proc. Amer. Math. Soc.} {\bf 90} (1984), 405–406.

\bibitem{R1988} H. L. Royden, {\it Real analysis},
Third edition. Macmillan Publishing Company, New York, 1988. xx+444~pp.

\bibitem{S1988} N. Salinas, {\it Products of kernel functions and module tensor products}, Topics in operator theory, 219-241, Oper. Theory Adv. Appl, {\bf 32}, Birkh$\ddot{\mbox{a}}$user, Basel, 1988.

\bibitem{Sa2015} J. Sarkar, {\it An introduction to Hilbert module approach to multivariable operator theory}, In: D. Alpay (eds.), Operator Theory. Springer, Basel., (2015), 969–1033. 

\bibitem{Sb2015} J. Sarkar, {\it Applications of Hilbert module approach to multivariable operator theory}, In: D. Alpay (eds.), Operator Theory. Springer, Basel., (2015), 1035–1091. 

\bibitem{SS1989} J. Stochel, F. H. Szafraniec, On normal extensions of unbounded operators. II,
{\it Acta Sci. Math. (Szeged)} {\bf 53} (1989), 153–177.

\bibitem{T1970} J. L. Taylor, A joint spectrum for several commuting operators, {\it J. Funct. Anal.} {\bf 6} (1970), 172–191.


\end{thebibliography}
\end{document}